\documentclass[12pt]{amsart} 

\usepackage{amssymb} 
\usepackage{amsmath} 
\usepackage{fullpage} 
\usepackage{url}

\hyphenation{super-martingale sub-martingale}

\newtheorem{theo}{Theorem}
\newtheorem{conj}{Conjecture}
\newtheorem{prop}{Proposition}
\newtheorem{lemma}{Lemma}
\newtheorem{cor}{Corollary}
\theoremstyle{definition}
\newtheorem{defi}{Definition}
\theoremstyle{remark}
\newtheorem{rem}{Remark}

\def\stacksum#1#2{{\stackrel{{\scriptstyle #1}}
{{\scriptstyle #2}}}}

%
%

\def\Gb{{\mathbf G}}

%
%

\def\Cr{{\mathbb C}}

\def\Er{{\mathbb E}}
\def\Fr{{\mathbb F}}

\def\Nr{{\mathbb N}}

\def\Pr{{\mathbb P}}

\def\Rr{{\mathbb R}}

\def\Tr{{\mathbb T}}

\def\Zr{{\mathbb Z}}
%
%

\def\Cc{{\mathcal{C}}}

\def\Fc{{\mathcal{F}}}

\def\one{{\rm \bf 1}}

\def\det{\operatorname{det}}
%
%
\def\({\left(}     
\def\){\right)}    
\def\[{\left[}     
\def\]{\right]}
\def\dl{\(\!\!\(}
\def\dr{\)\!\!\)}
\def\ddl{\(\!\(}
\def\ddr{\)\!\)}  
%
%

%

%

%
%
\def\ep{\varepsilon} 
%

%
%
%
%
\begin{document}
\title{Mod-$\varphi$ convergence} 
\author{Freddy Delbaen}
\author{Emmanuel Kowalski}
\address{ETH Z\"urich -- D-MATH \\ R\"{a}mistrasse
   101\\ 8092 Z\"{u}rich, Switzerland}
\author{Ashkan Nikeghbali}
 \address{Institut f\"ur Mathematik,
 Universit\"at Z\"urich, Winterthurerstrasse 190,
 8057 Z\"urich,
 Switzerland}
\date{\today}

\subjclass[2000]{60F05, 60B12, 60B20, 11K65, 11M06} 

\keywords{Local limit theorem, convergence to stable laws,
  mod-gaussian convergence, plane Brownian motion, random squarefree
  integers, random matrices, value-distribution of the Riemann zeta
  function}

\begin{abstract}
  Using Fourier analysis, we study local limit theorems in
  weak-convergence problems.  Among many applications, we discuss
  random matrix theory, some probabilistic models in number theory,
  the winding number of complex brownian motion and the classical
  situation of the central limit theorem, and a conjecture concerning
  the distribution of values of the Riemann zeta function on the
  critical line.
\end{abstract}

\maketitle

\section{Introduction}

In \cite{JKN}, the notion of \emph{mod-gaussian convergence} was
introduced: intuitively, it corresponds to a sequence of random
variables $X_n$ that -- through the Fourier lens -- ``look like'' a
sum $G_n+Y_n$ where $(G_n)$ is a sequence of gaussian variables with
arbitrary variance and $(Y_n)$ is a convergent sequence independent
from $G_n$. However, most interest lies in cases where this
simple-minded decomposition does not exist: what remains is the
existence of a limiting function $\Phi$, not necessarily a Fourier
transform of a probability measure, such that the limit theorem
\begin{equation}\label{eq-orig-def}
\lim_{n\rightarrow +\infty} \Er[e^{itG_n}]^{-1}\Er[e^{itX_n}]=
\Phi(t)
\end{equation}
holds, locally uniformly, for $t\in\Rr$.
\par
In the same spirit, we introduce in this paper a notion of
``convergence'' where the reference law is not necessarily gaussian
but a fairly general probability law, with integrable characteristic
function $\varphi$. Under suitable conditions, we are able to prove a
general local limit theorem which extends the result recently found
in~\cite[Th. 4]{KN}.
\par
As illustrations of the consequences of this framework, we mention two
results which, to the best of our knowledge, are new:

\begin{theo}[Local limit theorem for the winding number of complex
  brownian motion]
  For $u\geq 0$, let $\theta_u$ denote the argument or winding number
  of a complex brownian motion $W_u$ such that $W_0=1$. Then for any
  real numbers $a<b$, we have
$$
\lim_{u\rightarrow \infty}\frac{\log u}{2}\,
\Pr\[a<\theta_u<b\]=\frac{1}{\pi}(b-a).
$$
\end{theo}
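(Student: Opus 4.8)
The plan is to establish that $\theta_u$, with the normalization $c_u=\tfrac12\log u$, converges in the mod-$\varphi$ sense for $\varphi(t)=e^{-|t|}$ the characteristic function of the standard Cauchy distribution, and then to feed this into the general local limit theorem of this paper (the extension of \cite[Th.~4]{KN}). Since the Cauchy density at the origin equals $1/\pi$, that theorem will give $c_u\,\Pr[a<\theta_u<b]\to\tfrac1\pi(b-a)$ at once, so essentially all the work lies in the mod-$\varphi$ statement; I would split it into computing the characteristic function of $\theta_u$, determining its precise large-$u$ asymptotics, and checking the hypotheses (locally uniform convergence and integrability of the limiting function).

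For the characteristic function I would invoke the skew-product representation of planar Brownian motion: writing $\log W_u=\beta_{H_u}+i\gamma_{H_u}$ along the path, with $\beta+i\gamma$ a complex Brownian motion started at $0$ and $H_u=\int_0^u|W_s|^{-2}\,ds$, one has $\theta_u=\gamma_{H_u}$ where $\gamma$ is a real Brownian motion independent of the nondecreasing process $(H_v)_v$; conditioning on $(H_v)_v$ gives $\Er[e^{i\lambda\theta_u}]=\Er[\exp(-\tfrac{\lambda^2}{2}H_u)]$. Now $R:=|W|$ is a Bessel process of dimension $2$ (index $0$) started at $1$, which never reaches $0$, and Yor's absolute-continuity relation between Bessel laws of indices $0$ and $\mu\ge0$ gives $\Er[\exp(-\tfrac{\mu^2}{2}\int_0^u R_s^{-2}\,ds)]=\Er_1^{(\mu)}[R_u^{-\mu}]$, where $\Er_r^{(\mu)}$ is the expectation for the Bessel process of index $\mu$ started at $r$. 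Using $\theta_u\stackrel{\mathrm d}{=}-\theta_u$, I obtain the clean identity $\Er[e^{i\lambda\theta_u}]=\Er_1^{(|\lambda|)}[R_u^{-|\lambda|}]$.

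For the asymptotics, Brownian scaling shows that $R_u/\sqrt u$ has, for the Bessel process of a fixed index, the law of its value at time $1$ when started at $1/\sqrt u$; hence
\[
e^{\frac{\log u}{2}|\lambda|}\,\Er[e^{i\lambda\theta_u}]=\Er_{1/\sqrt u}^{(|\lambda|)}\bigl[R_1^{-|\lambda|}\bigr].
\]
As the starting point tends to $0$, the law of $R_1$ for the index-$|\lambda|$ Bessel process (dimension $2|\lambda|+2\ge2$) converges weakly, and a pathwise comparison in the starting point dominates $\bigl(R_1\bigr)^{-|\lambda|(1+\delta)}$ by an integrable variable, uniformly over the starting point and over $\lambda$ in a compact set; so the right-hand side converges, locally uniformly in $\lambda$, to $\Er_0^{(|\lambda|)}[R_1^{-|\lambda|}]=:\Psi(\lambda)$. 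Since $R_1^2$ for the index-$\mu$ Bessel process from $0$ is $2$ times a $\Gamma(\mu+1)$ variable, $\Psi(\lambda)=2^{-|\lambda|/2}\,\Gamma(\tfrac{|\lambda|}{2}+1)/\Gamma(|\lambda|+1)$, so $\Psi(0)=1$ and, by Stirling's formula, $\Psi\in L^1(\Rr)$. This is exactly mod-$\varphi$ convergence for the Cauchy reference with $c_u=\tfrac12\log u\to\infty$, and applying the general local limit theorem yields $\tfrac{\log u}{2}\Pr[a<\theta_u<b]\to\Psi(0)\cdot\tfrac1\pi(b-a)=\tfrac1\pi(b-a)$.

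The hard part will be the last step: promoting the pointwise identity for $\Er[e^{i\lambda\theta_u}]$ to \emph{locally uniform} convergence of the renormalized characteristic functions and establishing the integrability of $\Psi$, since these are precisely the hypotheses under which the abstract local limit theorem operates. The individual ingredients---the skew-product representation and the independence it supplies, Yor's Bessel absolute continuity, Brownian scaling, and uniform integrability of the negative moments via a pathwise comparison---are classical, but their assembly, together with the observation that the framework genuinely applies here (because $|W|$ started away from the origin is a dimension-$2$ Bessel process that stays strictly positive), must be carried out with care.
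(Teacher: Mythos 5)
Your argument is correct in substance, and it takes a genuinely different route from the paper. The paper starts from Spitzer's explicit expression for $\Er[e^{it\theta_u}]$ as a combination of $I$-Bessel functions $I_{(|t|\pm1)/2}(1/4u)$, and then obtains the uniform integrability condition {\bf H3} by the crude but effective device of bounding $|I_\nu(z)|$ by a constant for $\nu$ in a compact interval and $|z|\le 1$ directly from the Taylor series, leading to $|\varphi_n(\Sigma_n^*t)|\le B_k u_n^{-1/2}\le B_k e^{-|t|/k}$ on $|\Sigma_n^*t|\le k$. You instead derive $\Er[e^{i\lambda\theta_u}]=\Er_1^{(|\lambda|)}[R_u^{-|\lambda|}]$ from the skew-product decomposition and Yor's Bessel absolute continuity, and then Brownian scaling gives the exact renormalized identity $e^{\frac{\log u}{2}|\lambda|}\Er[e^{i\lambda\theta_u}]=\Er_{1/\sqrt u}^{(|\lambda|)}[R_1^{-|\lambda|}]$. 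What this buys over the paper's computation is an explicit limiting function $\Psi(\lambda)=2^{-|\lambda|/2}\Gamma(\tfrac{|\lambda|}{2}+1)/\Gamma(|\lambda|+1)$, i.e.\ you actually verify the stronger condition {\bf H2'} rather than just {\bf H2}+{\bf H3}, which the paper does not do for this example. One small calibration: the hypothesis the local limit theorem actually needs is {\bf H3} (uniform integrability of $\varphi_u(\Sigma_u^*t)\one_{|\Sigma_u^*t|\le k}$), not integrability of $\Psi$ per se. Your pathwise monotonicity of Bessel processes in the starting point, giving $\Er_{1/\sqrt u}^{(\mu)}[R_1^{-\mu}]\le\Er_0^{(\mu)}[R_1^{-\mu}]=\Psi(\mu)\le\sup_{[0,k]}\Psi$ for $\mu=|\Sigma_u^*t|\le k$, yields precisely the dominating bound $|\varphi_u(\Sigma_u^*t)|\one_{|\Sigma_u^*t|\le k}\le C_k\,e^{-|t|}$ required by {\bf H3}; it would be cleaner to state the conclusion that way rather than invoking $\Psi\in L^1$, and the exponent $(1+\delta)$ in your domination step is superfluous since the comparison already gives the bound at exponent $\mu$ itself.
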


This is proved in Section~\ref{sec-winding}.

\begin{theo}[Local limit theorem for unitary matrices]\label{th-unitary}
  For $n\geq 1$, let $g_n$ denote a random matrix which is
  Haar-distributed in the unitary group $U(n)$. Then for any bounded
  Borel subset $B\subset \Cr$ with boundary of Lebesgue measure $0$,
  and for any $b\in\Cr$,  we have
$$
\lim_{n\rightarrow +\infty} \Bigl(\frac{\log n}{2}\Bigr) {\Pr\[\log
  \det(1-g_n)-\frac{\log n}{2}\ b\in B\]}=\frac{1}{2\pi}e^{-|b|^2/2}m(B),
$$
where $m(\cdot)$ denotes the Lebesgue measure on $\Cr$.
\end{theo}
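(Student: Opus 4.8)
The plan is to derive Theorem~\ref{th-unitary} from our general local limit theorem, applied --- in its $\Rr^2$-valued, Gaussian-reference form --- to the $\Cr\cong\Rr^2$-valued variables $X_n:=\log\det(1-g_n)$. The first task is to establish mod-Gaussian convergence of $(X_n)$ on a neighbourhood of the origin. Identifying $\xi=(\xi_1,\xi_2)\in\Rr^2$ with $\xi_1+i\xi_2\in\Cr$, one may write $e^{i\langle\xi,X_n\rangle}=\det(1-g_n)^{z}\,\overline{\det(1-g_n)}^{\,w}$ (powers taken with the branch of the logarithm implicit in $X_n$) with $z=\tfrac12(\xi_2+i\xi_1)$, $w=\tfrac12(-\xi_2+i\xi_1)$, so that $z+w=i\xi_1$ and $zw=-|\xi|^2/4$. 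Using the Weyl integration formula (a Selberg-type integral evaluation, the identity underlying the Keating--Snaith moment computations), for $z,w$ near $0$,
\[
\Er\bigl[\det(1-g_n)^{z}\,\overline{\det(1-g_n)}^{\,w}\bigr]=\prod_{j=1}^{n}\frac{\Gamma(j)\,\Gamma(j+z+w)}{\Gamma(j+z)\,\Gamma(j+w)},
\]
and rewriting the $\Gamma$-products through the Barnes function $G$ ($G(z+1)=\Gamma(z)G(z)$, $G(1)=1$), together with the classical asymptotics $G(n+1)G(n+1+z+w)/\bigl(G(n+1+z)G(n+1+w)\bigr)\sim n^{zw}$, this gives, locally uniformly for $\xi$ near the origin,
\[
\Er\bigl[e^{i\langle\xi,X_n\rangle}\bigr]=n^{-|\xi|^2/4}\,\Psi(\xi)\,\bigl(1+o(1)\bigr),
\]
where
\[
\Psi(\xi):=\frac{G\bigl(1+\tfrac12(\xi_2+i\xi_1)\bigr)\,G\bigl(1+\tfrac12(-\xi_2+i\xi_1)\bigr)}{G(1+i\xi_1)}.
\]
Since $n^{-|\xi|^2/4}=e^{-\frac14(\log n)|\xi|^2}$ is the characteristic function of the centred Gaussian on $\Rr^2$ with covariance $\tfrac12(\log n)\,I_2$, this is mod-Gaussian convergence with parameter $\gamma_n=\tfrac12\log n\to\infty$ and limiting function $\Psi$, continuous near $0$ with $\Psi(0)=G(1)=1$.

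Granting the uniform estimate discussed below, the general local limit theorem applies with dimension $d=2$, and its output has exactly the claimed shape: the normalising factor is $\gamma_n^{d/2}=\gamma_n=\tfrac12\log n$; recentring at $\sqrt{\gamma_n}\,b=\sqrt{\tfrac12\log n}\,b$ (the natural scale, $\sqrt{\gamma_n}$ being the order of each coordinate of $X_n$) and substituting $\xi=\eta/\sqrt{\gamma_n}$ turns $\Er[e^{i\langle\xi,X_n\rangle}]$ into its rescaled local limit $\Psi(0)\,e^{-|\eta|^2/2}=e^{-|\eta|^2/2}$, so that the Gaussian integral $\tfrac1{(2\pi)^2}\int_{\Rr^2}e^{-|\eta|^2/2}e^{-i\langle\eta,b\rangle}\,d\eta=\tfrac1{2\pi}e^{-|b|^2/2}$ supplies the constant, while $\widehat{\mathbf 1_B}(0)=m(B)$ supplies the volume. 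The assumption $m(\partial B)=0$ is used only here: it allows one to squeeze $\mathbf 1_B$ between functions $f_\pm\in C_c^\infty(\Rr^2)$ with $\int f_-\ge m(B)-\ep$ and $\int f_+\le m(B)+\ep$, reducing the statement to such smooth test functions, for which the Fourier-inversion step just sketched is routine (boundedness of $B$ keeping $\int|f_\pm|$ finite).

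The main obstacle is precisely the uniform control required to pass to the limit under the Fourier integral. The mod-Gaussian estimate is only \emph{local}: the function $\Psi$ does not decay --- on every ray through the origin $|\Psi(\xi)|$ grows like $\exp\bigl(\tfrac14|\xi|^2\log|\xi|\bigr)$ (for instance, on $\xi_1=0$ it equals $G(1+\xi_2/2)G(1-\xi_2/2)$, carrying the zeros of $G(1-\xi_2/2)$), so $\Psi$ is not a characteristic function and the large-$|\xi|$ part of the inversion integral cannot be dominated through it; moreover the $\Gamma$-product identity itself fails outside a bounded region. What one needs in addition is, uniformly in $n$, a bound $|\Er[e^{i\langle\xi,X_n\rangle}]|\le n^{-c|\xi|^2}$ on a fixed annulus $\ep\le|\xi|\le A$ and rapid decay of $|\Er[e^{i\langle\xi,X_n\rangle}]|$ for $|\xi|\ge A$. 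I would obtain both from the fact that $\log\det(1-g_n)$ is, in law, a sum of $n$ independent summands, $\log\det(1-g_n)\overset{d}{=}\sum_{k=1}^{n}\log(1-\gamma_k)$ with the $\gamma_k$ independent, $|\gamma_k|$ of order $k^{-1/2}$ and with uniformly distributed argument (the Bourgade--Hughes--Nikeghbali--Yor decomposition, coming ultimately from the recursion for Verblunsky coefficients). Then $\Er[e^{i\langle\xi,X_n\rangle}]=\prod_{k=1}^{n}\Er[e^{i\langle\xi,\log(1-\gamma_k)\rangle}]$; a second-order expansion of the $k$-th factor gives $|\Er[e^{i\langle\xi,\log(1-\gamma_k)\rangle}]|\le 1-c|\xi|^2/k$ for $\xi$ bounded and $k$ large, whence $\prod_{k}|\Er[e^{i\langle\xi,\log(1-\gamma_k)\rangle}]|\le\exp(-c|\xi|^2\log n)=n^{-c|\xi|^2}$ on the annulus, while for $|\xi|$ large the first few factors alone --- their laws having smooth densities on $\Rr^2$ --- already force rapid decay. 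Inserting this estimate into the Fourier-inversion and dominated-convergence argument of the previous paragraph completes the proof. (Alternatively, the same control can be extracted from direct bounds on ratios of $\Gamma$-functions together with known regularity of the law of $\log\det(1-g_n)$.)
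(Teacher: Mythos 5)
Your proposal is correct and follows the same overall strategy as the paper: mod-Gaussian convergence of $X_n=\log\det(1-g_n)$ in dimension $d=2$ via the Keating--Snaith moment formula and Barnes $G$-function asymptotics (your $\Psi$ is exactly the paper's limiting function $\Phi_U$, and your covariance $\tfrac12(\log n)I_2$ matches $A_n=\sqrt{(\log n)/2}\,I$, $|\det A_n|=\tfrac12\log n$), followed by the general local limit theorem, Theorem~\ref{th-local-limit}, and a recentering step; the paper packages your ``multiply by $e^{-i\langle\eta,b\rangle}$'' computation as Proposition~\ref{pr-shift-mean} with $\alpha_n=A_nb=\sqrt{(\log n)/2}\,b$, which is indeed the scale at which the stated limit $\frac{1}{2\pi}e^{-|b|^2/2}m(B)$ is correct. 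The one point where you genuinely diverge is the verification of the uniform-integrability hypothesis {\bf H3}, which you rightly identify as the main obstacle. The paper does not reprove anything here: it imports from~\cite[Prop.~12, Prop.~15]{KN} the uniform estimate $|\varphi_n(t)|\le C\,|\Phi_U(t)\varphi(A_n^*t)|$ valid for all $|t|\le n^{1/6}$ (obtained from uniform asymptotics of the same $\Gamma$/Barnes product), which dominates $\varphi_n(\Sigma_n^*s)\one_{|\Sigma_n^*s|\le k}$ by an integrable Gaussian because the cutoff only involves $|t|\ll\sqrt{\log n}\ll n^{1/6}$. You instead propose to extract the bound $n^{-c|\xi|^2}$ on an annulus from the decomposition of $\det(1-g_n)$ into a product of $n$ independent factors and a second-order expansion of each factor's characteristic function; this is a legitimate, more probabilistic alternative, and your accounting of what is needed is essentially sound (your annulus bound rescales to $e^{-2c|s|^2}$, and the local mod-Gaussian estimate with $\Psi$ bounded near $0$ covers $|\xi|\le\ep$). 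Note only that the ``rapid decay for $|\xi|\ge A$'' you ask for is not actually required in this framework: {\bf H3} carries the indicator $\one_{|\Sigma_n^*t|\le k}$, so taking $A\ge k$ simply cuts off the region $|\xi|>A$, which is one of the conveniences the mod-$\varphi$ formulation buys you.
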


This, together with similar facts for the other families of classical
compact groups, is proved in Section~\ref{sec-random-matrices}.
\par
We emphasize that these two applications are just examples; this paper
contains quite a few more, and it seems certain that many more
interesting convergence theorems can be proved or understood using the
methods of this paper.
\par
\medskip
\par
\textbf{Notation and preliminaries.}  Our random variables will take
values in $\Rr^d$, a fixed $d-$dimensional space, and we denote by
$|t|$ the Euclidian norm in $\Rr^d$. We will use the Landau and
Vinogradov notations $f=O(g)$ and $f\ll g$ in some places; these are
equivalent statements, and mean that there exists a constant $c\geq 0$
such that
$$
|f(x)|\leq cg(x)
$$
for all $x$ in a set $X$ which is indicated. Any suitable value of $c$
is called ``an implied constant'', and it may depend on further
parameters.
\par
A sequence of probability measures $(\mu_n)_n$ on $\Rr^d$ converges
weakly to a probability measure if for all bounded continuous
functions $f\colon \Rr^d\rightarrow \Rr$, we have $\lim_n\int
f\,d\mu_n=\int f\,d\mu$.  Equivalently we can ask that the convergence
holds for $C^\infty$ functions with compact support. L\'evy's
theorem asserts that this is equivalent to the pointwise convergence
of the characteristic functions $\int \exp(i\,t\cdot
x)\,d\mu_n\rightarrow \int \exp(i\,t\cdot x)\,d\mu$.  L\'evy's theorem
can be phrased as follows.  If $\varphi_n$ is the sequence of
characteristic functions of probability measures $\mu_n$, if
$\varphi_n(t)$ converges pointwise to a function $\varphi$, if this
convergence is continuous at the point $0$, then $\varphi$ is a
characteristic function of a probability measure $\mu$, $\mu_n$
converges weakly to $\mu$ and the convergence of $\varphi_n$ to
$\varphi$ is uniform on compact sets of $\Rr^d$.  We recall that the
convergence is continuous if $x_n\rightarrow 0$ in $\Rr^d$ implies
$\varphi_n(x_n)\rightarrow \varphi(0)=1$.
\par
We say that a sequence of random variables converges in law if the
image measures (or laws) converge weakly.  Most of the time one needs
a scaling of the sequence.  This is for instance the case in the
central limit theorem, which in an elementary form says that for a
sequence of independent identically distributed real-valued random
variables, $(X_n)_n$, $\Er[X_n]=0,\Er[X_n^2]=1$, the normalised (or
rescaled) sequence $\frac{X_1+\ldots X_n}{\sqrt{n}}$ converges weakly
to the standard gaussian law.  
\par
In the applications below we will use different kinds of scaling. In
the higher-dimensional case, we will scale the random variables using
a sequence of linear isomorphisms (or non-degenerate matrices)
$A_n\colon \Rr^d\rightarrow \Rr^d$.  The inverse of these matrices
will be denoted by $\Sigma_n\colon\Rr^d\rightarrow \Rr^d$. The
transpose of a linear map or matrix $A$ is denoted by $A^*$.
\par
Our methods are based on Fourier analysis and we will use basic facts
from this theory freely. We define the Fourier transform as is usually
done in probability theory, namely
$$
\hat{f}(t)=\int_{\Rr^d} \exp(i\,t\cdot x)f(x)\,m(dx).
$$
\par
The inversion formula is, at least when $\hat{f}\in L^1(\Rr^d)$, given
by 
$$
f(x)=\(\frac{1}{2\pi}\)^d\int_{\Rr^d}\exp(-it\cdot x)\hat{f}(t)\,dt.
$$
\par
In particular, when $\mu$ is a probability measure with an integrable
characteristic function $\varphi$, we get that $\mu$ is absolutely
continuous with respect to Lebesgue measure $m$, $\mu\ll m$, and its
density is given by 
$$
\frac{d\mu}{dm}(x)=\(\frac{1}{2\pi}\)^d\int \exp(-i
  t\cdot x)\varphi(t)\,m(dt),
$$
which is therefore continuous. 
\par
The proof of our main result (Theorem~\ref{th-local-limit}) is based
on the following approximation theorem:

\begin{theo}\label{th-approx}
  Suppose $f\colon\Rr^d\rightarrow \Rr$ is a continuous function with
  compact support.  Then for each $\eta>0$ we can find two functions
  $g_1,g_2\colon\Rr^d\rightarrow \Rr$ such that
\begin{enumerate}
\item $\widehat{g_1},\widehat{g_2}$ have compact support,
\item $g_2\le f \le g_1$,
\item $\int_{\Rr^d}(g_1-g_2)(t)\,m(dt)\le \eta$.
\end{enumerate}
\end{theo}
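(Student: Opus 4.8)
The plan is to build $g_1$ and $g_2$ by convolution smoothing combined with a Fejér-type (i.e. band-limited) truncation, so that the band-limited property of the Fourier transform is forced by construction and only the sandwiching inequalities and the $L^1$-smallness require work. Concretely, fix a smooth nonnegative bump $\psi$ on $\Rr^d$ with $\int\psi\,dm=1$ whose Fourier transform $\widehat{\psi}$ is compactly supported — the standard choice is to take $\psi$ itself to be (a normalization of) the square of the Fourier transform of a compactly supported smooth function, so that $\widehat\psi$ has compact support, $\psi\ge 0$, and $\psi$ is Schwartz. Rescale to $\psi_\delta(x)=\delta^{-d}\psi(x/\delta)$, whose Fourier transform is $\widehat\psi(\delta\,\cdot\,)$, still compactly supported. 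The natural candidates are then $g_1 = (f+\epsilon_1)\ast\psi_\delta$-type expressions; the subtlety is that $f\ast\psi_\delta$ is neither above nor below $f$ in general, so one must add explicit error terms controlled by the modulus of continuity of $f$.

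First I would record the two elementary facts I will lean on: (a) since $f$ is continuous with compact support it is uniformly continuous, so $\omega(\delta):=\sup_{|y|\le\delta}\|f(\cdot+y)-f\|_\infty\to 0$ as $\delta\to 0$, and $f$ is supported in some ball $B(0,R)$; (b) a function of the form $h\ast\psi_\delta$ has Fourier transform $\widehat h\cdot\widehat\psi(\delta\,\cdot\,)$, hence has compact support provided $\widehat\psi$ does — note that here I do not even need $\widehat h$ to have compact support, only $\widehat\psi$, which is the whole point of choosing $\psi$ band-limited. Then I would set, for a tail parameter $\rho>0$ to be chosen and a cutoff radius $S>R+\rho$,
\begin{equation*}
 g_1 = \big(f + 2\,\omega(\rho)\,\one_{B(0,S)}\big)\ast\psi_\rho,\qquad
 g_2 = \big(f - 2\,\omega(\rho)\,\one_{B(0,S)}\big)\ast\psi_\rho .
\end{equation*}
Wait — $\one_{B(0,S)}\ast\psi_\rho$ is not compactly supported in Fourier (the indicator is not band-limited), but $\psi_\rho$ \emph{is}, and convolution with a band-limited function produces a band-limited function, so $\widehat{g_1},\widehat{g_2}$ do have compact support; that is condition (1). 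For condition (2), at a point $x$ one writes $(f\ast\psi_\rho)(x)-f(x)=\int (f(x-y)-f(x))\psi_\rho(y)\,dm(y)$; splitting the integral over $|y|\le\rho$ and $|y|>\rho$ and using that $\psi$ is Schwartz (so $\int_{|y|>\rho}\psi_\rho(y)\,dm(y)=\int_{|y|>1}\psi(y)\,dm(y)$ is a \emph{fixed} number independent of $\rho$ — this is a genuine annoyance), one sees the naive bump width does not by itself beat the continuity modulus near the support boundary. The clean fix is to choose $\psi$ compactly supported is impossible together with $\widehat\psi$ compactly supported; so instead I would accept a two-parameter argument: smooth at scale $\rho$, and enlarge the additive correction to $C(\|f\|_\infty\, \tau(\rho) + \omega(\rho))$ where $\tau(\rho)=\int_{|y|>\rho^{1/2}}\psi_\rho\,dm \to 0$ by rapid decay, which does go to $0$.

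So the real structure of the proof is: (i) pick $\psi$ Schwartz, nonnegative, integral $1$, band-limited; (ii) given $\eta$, choose $\rho$ small enough that $\omega(\rho)$ and the rapid-decay tail $\tau(\rho)$ are both tiny; (iii) define $g_1,g_2$ as $f\ast\psi_\rho$ plus/minus an explicit nonnegative correction of the form (constant depending on $\|f\|_\infty,\|f\|_1,R$) $\times$ (something small) $\times \one_{B(0,S)}\ast\psi_\rho$, with $S$ chosen so the correction dominates $|f\ast\psi_\rho - f|$ uniformly in $x$; (iv) verify (2) by the split-integral estimate above; (v) verify (3) by computing $\int(g_1-g_2)\,dm = (\text{constant})\cdot(\text{small})\cdot m(B(0,S))$ using $\int\psi_\rho\,dm=1$, and checking that $S$ grows slowly enough (e.g. polynomially in $1/\rho$) that this product still tends to $0$ — this requires being a little careful about the order in which $S$ and the smallness parameter are chosen, which is the main obstacle. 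The cleanest route to control $|f\ast\psi_\rho-f|$ globally is to bound it by $\omega(\rho) + 2\|f\|_\infty\tau(\rho)$ for $|x|\le R+1$ and by $\|f\|_\infty\,\tau'(|x|)$ (a decaying tail) for $|x|>R+1$, so that a single correction term with a fixed large ball $B(0,S_0)$ suffices once $\rho$ is small; then the order-of-quantifiers issue disappears because $S_0=R+2$ works for all small $\rho$, and (3) reads $\le C(f)\,(\omega(\rho)+\tau(\rho))\le\eta$. I would present it in that order, flagging step (iv)'s boundary behaviour as the one place where the non-compact support of the band-limited kernel must be handled with its Schwartz decay.
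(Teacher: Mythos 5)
Your construction gets (1) and (3) essentially for free: $\widehat{g_i}=\widehat{(f\pm c\,\one_{B(0,S)})}\cdot\widehat{\psi}(\rho\,\cdot)$ is indeed compactly supported, and $\int(g_1-g_2)\,dm=2c\,m(B(0,S))$ is small once $c$ is small and $S$ is fixed. The genuine gap is in (2), in the far field. Reduce to $f\ge 0$ (which you should do explicitly, as otherwise both inequalities are affected). For $|x|$ large, $f(x)=0$ and the minorant inequality $g_2(x)\le 0$ reads
$$
\int_{B(0,R)}f(y)\,\psi_\rho(x-y)\,m(dy)\;\le\;c\int_{B(0,S)}\psi_\rho(x-y)\,m(dy),
$$
i.e.\ you need the correction term to \emph{dominate from below} a tail of the very same kernel, with a constant $c$ that tends to $0$. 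Both sides are integrals of $\psi_\rho$ over balls of comparable size sitting at comparable distances from $x$, so nothing forces the right side to be $\|f\|_\infty/c$ times larger than the left. The Schwartz property of $\psi$ gives only \emph{upper} bounds on its decay, so no usable lower bound on the right-hand side is available; and for the concrete Fej\'er-type choices (powers of $\sin(u)/u$, which decay polynomially) the ratio of the two sides is of order $(S/R)^d$, so the inequality forces $c\gtrsim \|f\|_\infty (R/S)^d$ and hence $\int(g_1-g_2)\gtrsim \|f\|_\infty R^d$, which does not go to $0$. Your proposed "cleanest route" (bound $|f\ast\psi_\rho-f|$ by a decaying tail $\|f\|_\infty\tau'(|x|)$ for $|x|>R+1$ and absorb it into a correction supported near a fixed ball $B(0,S_0)$) runs into exactly this: $\one_{B(0,S_0)}\ast\psi_\rho(x)$ decays at the same rate as $\tau'(|x|)$, so it cannot absorb it with a small prefactor. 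This is not a presentational issue but a structural one: an additive correction that is itself a mollified indicator cannot control the tails of $f\ast\psi_\rho$ off the support of $f$.

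The paper's proof is organized precisely to avoid needing any lower bound in the far field. The majorant there is a \emph{product} $g_1=ph$, where $p$ is a globally nonnegative periodic trigonometric polynomial with $p\ge f$ on a large period box $[-N,N]^d$ and $h$ is a nonnegative Fej\'er kernel with $h\le 1$ and $h\ge(1-\delta)^d$ on the support of $f$. Off the support of $f$ the inequality $g_1=ph\ge 0=f$ is automatic from nonnegativity, and the $L^1$ control of $g_1-f$ far away uses only the periodic boundedness of $p$ and \emph{upper} bounds on the tail of $h$ — exactly the estimates that are actually available. If you want to salvage your mollification approach, you must similarly arrange global one-sided inequalities that hold for sign reasons off a compact set (e.g.\ by first majorizing $f$ by a multiple of a fixed nonnegative band-limited integrable function and working with the difference), rather than trying to beat one rapidly decaying tail by another.
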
 

This is a standard result in Fourier analysis, see e.g. Bretagnolle
and Dacunha-Castelle~\cite{BDC}; for the sake of completeness, we
sketch a proof in Appendix A.

\section{Mod$-\varphi$ convergence}

\subsection{Definition}

We now explain our generalization of the definition
in~\cite{JKN}. First of all, we fix $d\geq 1$ and a probability
measure $\mu$ on $\Rr^d$. We then assume given a sequence $(X_n)$ of
random variables defined on a probability space $(\Omega,\Fc,\Pr)$ and
taking values also in $\Rr^d$. We define $\varphi_n$ to be the
characteristic function of $X_n$. We now consider the following
properties:
\par
\medskip
\begin{itemize}
\item{\bf H1.} The characteristic function $\varphi$ of the
  probability measure $\mu$ is integrable; in particular, $\mu$ has a
  density $d\mu/dm$, with respect to Lebesgue measure $m$.
\item{\bf H2.} There exists a sequence of linear automorphisms
  $A_n\in\mathrm{GL}_d(\Rr)$, with inverses $\Sigma_n=A_n^{-1}$, such
  that $\Sigma_n$ converges to $0$ and $\varphi_n(\Sigma^*_n t)$
  converges continuously at $0$ (or what is equivalent: uniformly on
  compact sets) to $\varphi(t)$.  In other words, the renormalized
  random variables $\Sigma_n(X_n)$ converge in law to $\mu$.
\item{\bf H3.} For all $k\geq 0$, the sequence
$$
f_{n,k}=\varphi_n(\Sigma_n^*t)\one_{|\Sigma_n^*t|\le k}
$$ 
is \emph{uniformly integrable} on $\Rr^d$; since $f_{n,k}$ are
uniformly bounded in $L^1$ and $L^{\infty}$ (for fixed $k$), this is
equivalent to the statement that, for all $k\geq 0$, we have
\begin{equation}\label{eq-formal-h3}
\lim_{a\rightarrow +\infty} \sup_{n\geq 1}{\int_{|t|\geq
    a}|\varphi_n(\Sigma_n^*t)| \one_{|\Sigma_n^*t|\le k}m(dt)}=0.
\end{equation}
\end{itemize}

\begin{rem}
  Property {\bf H1} excludes discrete probability laws, such as
  Poisson random variables. However, similar ideas do apply for such
  cases. We refer to~\cite{KN2} (for the case of Poisson
  distributions) and to~\cite{BKN} (for much more general discrete
  distributions, where earlier work of Hwang~\cite{Hwang} are also
  relevant) for these developments.
\end{rem}

\begin{rem}\rm 
  Property {\bf H3} will typically be established by proving an
  estimate of the type
\begin{equation}\label{eq-dominated}
|\varphi_n(\Sigma_n^*t)|\leq h(t)
\end{equation}
for all $n\geq 1$ and all $t\in \Rr^d$ such that $|\Sigma_n^*t|\leq
k$, where $h\geq 0$ is an integrable function on $\Rr^d$ (which may
depend on $k$).
\end{rem}

We give a name to sequences with these properties:

\begin{defi}[Mod-$\varphi$ convergence]\label{def-modphi}
  If $\mu$ is a probability measure on $\Rr^d$ with characteristic
  function $\varphi$, $X_n$ is a sequence of $\Rr^d$-valued random
  variables with characteristic functions $\varphi_n$, and if the
  properties {\bf H1}, {\bf H2}, {\bf H3} hold, then we say that there
  is \emph{mod$-\varphi$ convergence} for the sequence $X_n$.
\end{defi}

Below, we will comment further on the hypotheses, and in particular
give equivalent formulations of {\bf H3}. In
Section~\ref{sec-central-limit}, we also explain the relation with
conditions arising in classical convergence theorems.
\par 
To make the link with the original definition in~\cite{JKN}, i.e., the
assumption that a limit formula like~(\ref{eq-orig-def}) holds, we
observe that mod-$\varphi$ convergence will hold when {\bf H1} is true
and we have
\begin{itemize}
\item{\bf H2'.} There exists a sequence of linear automorphisms
  $A_n\in\mathrm{GL}_d(\Rr)$, with inverses $\Sigma_n=A_n^{-1}$, such
  that $\Sigma_n$ converges to $0$, and there exists a continuous
  function $\Phi\,:\, \Rr^d\rightarrow \Cr$ such that
\begin{equation}\label{eq-original}
\varphi_n(t)=\Phi(t)\varphi(A_n^*t)(1+o(1))
\end{equation}
uniformly for $t$ such that $|\Sigma_n^*t|\leq k$, for arbitrary
$k>0$.
\end{itemize}

In many applications considered in this paper (not all), this stronger
condition holds, or is expected to hold. It is very likely that, when
this is the case, the ``limiting function'' $\Phi$ also carries
significant information, as discussed already in special cases
in~\cite[\S 4]{JKN}.

\subsection{Local limit theorem}

We now state and prove our main result, which is a local limit theorem
that shows that, when mod$-\varphi$ convergence holds, the
expectations $\Er\[f(X_n)\]$ (for reasonable functions $f$) do not
converge, but are well-controlled: they behave like
$$
|\det(A_n)|^{-1}\frac{d\mu}{dm}(0)\int_{\Rr^d} f\,dm
$$
as $n$ goes to infinity. The proof turns out to be very simple:


\begin{theo}[Local limit theorem for mod-$\varphi$
  convergence]\label{th-local-limit}
  Suppose that mod$-\varphi$ convergence holds for the sequence $X_n$.
  Then we have
$$
|\det(A_n)|\,\Er\[f(X_n)\]\rightarrow \frac{d\mu}{dm}(0)\int_{\Rr^d} f\,dm,
$$
for all 
continuous functions with compact support. Consequently we also have
\begin{equation}\label{eq-limit-jordan}
|\det(A_n)|\,\Pr\[X_n\in B\]\rightarrow \frac{d\mu}{dm}(0)\,m(B).
\end{equation}
for relatively compact Borel sets $B\subset\Rr^d$ with $m(\partial
B)=0$, or in other words for bounded Jordan-measurable sets $B\subset
\Rr^d$.
\end{theo}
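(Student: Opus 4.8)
The plan is to compute the limit of $|\det(A_n)|\,\Er[f(X_n)]$ by unwinding the Fourier transforms, using the approximation theorem (Theorem~\ref{th-approx}) to reduce to the case where $\widehat{f}$ has compact support, then passing to the limit under the integral sign by uniform integrability. First I would reduce to functions $f$ with $\widehat{f}$ compactly supported: given a general continuous $f$ with compact support and $\eta>0$, choose $g_1,g_2$ as in Theorem~\ref{th-approx}, so that $g_2\le f\le g_1$ and $\int(g_1-g_2)\,dm\le\eta$; since each $g_i$ has compactly supported Fourier transform, if the theorem holds for them then
$$
\limsup_n |\det(A_n)|\,\Er[f(X_n)]\le \limsup_n |\det(A_n)|\,\Er[g_1(X_n)]=\frac{d\mu}{dm}(0)\int g_1\,dm\le \frac{d\mu}{dm}(0)\Bigl(\int f\,dm+\eta\Bigr),
$$
and similarly for the $\liminf$ with $g_2$; letting $\eta\to 0$ gives the result for $f$. (One should note $\tfrac{d\mu}{dm}(0)\ge 0$, and handle the trivial case where it vanishes; also $g_1,g_2$ need not have compact support, but they are integrable, which is what matters.)

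So assume $\widehat{f}\in L^1$ with compact support, say $\mathrm{supp}(\widehat f)\subset\{|t|\le k\}$. By the Fourier inversion formula,
$$
f(x)=\Bigl(\frac{1}{2\pi}\Bigr)^d\int_{\Rr^d}\exp(-it\cdot x)\widehat f(t)\,dt,
$$
so by Fubini (justified since $\widehat f\in L^1$ and $X_n$ is a probability measure),
$$
\Er[f(X_n)]=\Bigl(\frac{1}{2\pi}\Bigr)^d\int_{\Rr^d}\widehat f(t)\,\Er[\exp(-it\cdot X_n)]\,dt=\Bigl(\frac{1}{2\pi}\Bigr)^d\int_{\Rr^d}\widehat f(t)\,\varphi_n(-t)\,dt.
$$
Now substitute $t=A_n^* s$, i.e. $s=\Sigma_n^* t$; the Jacobian is $|\det(A_n^*)|=|\det(A_n)|$, so
$$
|\det(A_n)|\,\Er[f(X_n)]=\Bigl(\frac{1}{2\pi}\Bigr)^d\int_{\Rr^d}\widehat f(A_n^* s)\,\varphi_n(-A_n^* s)\,ds=\Bigl(\frac{1}{2\pi}\Bigr)^d\int_{\Rr^d}\widehat f(A_n^* s)\,\varphi_n(\Sigma_n^{*}(-s))^{\;}\,ds,
$$
where I used $-A_n^* s = \Sigma_n^*(-s)$ is not quite right — rather, writing $u=-s$ and $\varphi_n(-A_n^*s)=\varphi_n(A_n^*u)$, and noting $\widehat f(A_n^* s)=\widehat f(-A_n^* u)$, it is cleaner to keep the integrand in the form $\widehat f(A_n^*s)\,\varphi_n(-A_n^*s)$ and analyze it directly. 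The pointwise limit of the integrand is what we need: since $\Sigma_n\to 0$ we have $\Sigma_n^*\to 0$, hence for each fixed $s$, $A_n^* s \to$ ... no: $A_n^*$ blows up. The correct reading is that $\Sigma_n^* t \to 0$ for fixed $t$, so in the integral $\int \widehat f(t)\varphi_n(-t)\,dt$ one does not substitute; instead one observes directly that since $\widehat f$ is supported in $|t|\le k$, we may insert $\one_{|t|\le k}$ freely, and then write $\varphi_n(-t)=\varphi_n(\Sigma_n^*(-A_n^* t))$ — again $A_n^* t$ is large. Let me restate: the clean computation is

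$$
|\det(A_n)|\,\Er[f(X_n)]=\Bigl(\tfrac{1}{2\pi}\Bigr)^d\int_{\Rr^d}|\det(A_n)|\,\widehat f(t)\,\varphi_n(-t)\,dt,
$$
and now substitute $t=-\Sigma_n^* u$ (Jacobian $|\det(\Sigma_n)|=|\det(A_n)|^{-1}$), giving
$$
|\det(A_n)|\,\Er[f(X_n)]=\Bigl(\tfrac{1}{2\pi}\Bigr)^d\int_{\Rr^d}\widehat f(-\Sigma_n^* u)\,\varphi_n(\Sigma_n^* u)\,du.
$$
Here the integrand is $\widehat f(-\Sigma_n^* u)\,\varphi_n(\Sigma_n^* u)$, which is supported on $\{|\Sigma_n^* u|\le k'\}$ for the $k'$ with $\mathrm{supp}(\widehat f)\subset\{|t|\le k'\}$ — wait, $\widehat f(-\Sigma_n^*u)\ne 0$ forces $|\Sigma_n^* u|\le k'$, so on the support $|\Sigma_n^*u|\le k'$. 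By {\bf H2}, $\varphi_n(\Sigma_n^* u)\to\varphi(u)$ locally uniformly, and $\widehat f(-\Sigma_n^* u)\to\widehat f(0)$ (since $\Sigma_n^*\to 0$), so the integrand converges pointwise to $\widehat f(0)\varphi(u)$.

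The main obstacle is justifying the passage to the limit under the integral, and this is exactly what {\bf H3} is designed for. Write the integrand as $\widehat f(-\Sigma_n^* u)\,\varphi_n(\Sigma_n^* u)\one_{|\Sigma_n^* u|\le k'}$ (inserting the indicator changes nothing on the support of $\widehat f$). Since $\widehat f$ is bounded, this is dominated in absolute value by $\|\widehat f\|_\infty\,|\varphi_n(\Sigma_n^* u)|\one_{|\Sigma_n^* u|\le k'} = \|\widehat f\|_\infty f_{n,k'}(u)$ in the notation of {\bf H3}, which is a uniformly integrable family; combined with pointwise convergence this gives, by Vitali's convergence theorem, that
$$
\int_{\Rr^d}\widehat f(-\Sigma_n^* u)\,\varphi_n(\Sigma_n^* u)\,du\;\longrightarrow\;\widehat f(0)\int_{\Rr^d}\varphi(u)\,du.
$$
Finally, $\widehat f(0)=\int f\,dm$ by definition of the Fourier transform, and $\bigl(\tfrac{1}{2\pi}\bigr)^d\int\varphi(u)\,du=\tfrac{d\mu}{dm}(0)$ by the density formula recalled in the preliminaries (applied at $x=0$). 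This yields $|\det(A_n)|\,\Er[f(X_n)]\to\tfrac{d\mu}{dm}(0)\int f\,dm$, completing the case of compactly supported $\widehat f$ and hence, by the first paragraph, for all continuous $f$ with compact support. The statement~\eqref{eq-limit-jordan} for bounded Jordan-measurable $B$ then follows by a routine sandwiching argument: approximate $\one_B$ from above and below by continuous compactly supported functions whose integrals differ by less than $\eta$ (possible since $m(\partial B)=0$), apply the function version, and let $\eta\to 0$.
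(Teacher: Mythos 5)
Your proof is correct and follows essentially the same route as the paper: reduce to $\widehat f$ of compact support via Theorem~\ref{th-approx}, pass $\Er[f(X_n)]$ through Fourier inversion/Plancherel, change variables by $\Sigma_n^*$, and then use \textbf{H2} for pointwise convergence and \textbf{H3} (Vitali) to pass to the limit. You track the sign in $\varphi_n(-t)$ more carefully than the paper's displayed formula, and your sandwich for the reduction step is stated a bit more explicitly, but these are cosmetic; the argument is the same.
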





\begin{proof}
  We first assume that $f$ is such that $\hat{f}$ has compact support;
  using Theorem~\ref{th-approx}, the case of a general continuous
  function with compact support will follow easily.  We write
$$
\Er[f(X_n)]=
\int_{\Rr^d}{f(x)d\mu_n(x)}
$$
where $\mu_n$ is the law of $X_n$, before applying the Plancherel
formula and the inversion formula to transform this into
$$
\Er[f(X_n)]=
\frac{1}{(2\pi)^d}
\int_{\Rr^d} \varphi_n(t)\hat{f}(t)\,m(dt).
$$
\par
By the linear change of variable $t=\Sigma_n^*s$, we get
$$
\Er[f(X_n)]=(2\pi)^{-d}|\det(\Sigma_n)|\int_{\Rr^d}
\varphi_n(\Sigma_n^*s)\hat{f}(\Sigma_n^*s)\,m(ds).
$$
\par
Now fix $k$ so that the support of $\hat{f}$ is contained in the ball
of radius $k$; we then have
$$
\Er[f(X_n)]=(2\pi)^{-d}|\det(\Sigma_n)|\int_{|\Sigma_n^*s|\leq k}
\varphi_n(\Sigma_n^*s)\hat{f}(\Sigma_n^*s)\,m(ds).
$$
\par
The integrand converges pointwise to $\varphi(s)\hat{f}(0)$ according
to the assumption {\bf H2}. The condition {\bf H3} of uniform
integrability then implies the convergence in $L^1$. One can see this
quickly in this case: for any $\ep>0$, and for any $a>0$ large enough,
we have
$$
\int_{|s|>a}{
|\varphi_n(\Sigma_n^*s)\one_{|\Sigma_n^*s|\leq k}
\hat{f}(\Sigma_n^*s)|\,m(ds)}
\leq \|\hat{f}\|_{\infty}\int_{|s|>a}{
|\varphi_n(\Sigma_n^*s)\one_{|\Sigma_n^*s|\leq k}|\,m(ds)}
<\ep
$$
for all $n$ by~(\ref{eq-formal-h3}). On $|s|\leq a$, the pointwise
convergence is dominated by $\|\hat{f}\|_{\infty}\one_{|s|\leq a}$,
hence
$$
\int_{|s|\leq a}{ \varphi_n(\Sigma_n^*s)\one_{|\Sigma_n^*s|\leq k}
  \hat{f}(\Sigma_n^*s)\,m(ds)}\rightarrow \hat{f}(0)\int_{|s|\leq
  a}{\varphi(s)m(ds)}.
$$
\par
For $a$ large enough, this is $\hat{f}(0)\int{\varphi}$, up to error
$\ep$, hence we get the convergence
$$
\int_{|\Sigma_n^*s|\leq k}
\varphi_n(\Sigma_n^*s)\hat{f}(\Sigma_n^*s)\,m(ds) \rightarrow
\hat{f}(0)\int_{\Rr^d}{\varphi(s)m(ds)}.
$$
\par
Finally, this leads to
$$
|\det(A_n)|\Er[f(X_n)]\rightarrow (2\pi)^{-d}\hat{f}(0)\int_{\Rr^d}
\varphi(s)m(ds)=
\frac{d\mu}{dm}(0)\int_{\Rr^d}{f(s)m(ds)},
$$
which concludes the proof for $f$ with $\hat{f}$ with compact
support.
\par
Now if $f$ is continuous with compact support, we use
Theorem~\ref{th-approx}: by linearity, we can assume $f$ to be
real-valued, and then, given $\eta>0$ and $g_2\leq f\leq g_1$ as in
the approximation theorem, we have
$$
|\det(A_n)|\Er[g_2(X_n)]\leq |\det(A_n)|\Er[f(X_n)]\leq 
|\det(A_n)|\Er[g_1(X_n)],
$$
and hence
$$
0\leq \limsup_{n} |\det(A_n)|\Er[f(X_n)]-
\liminf_{n} |\det(A_n)|\Er[f(X_n)]\leq 
\int{(g_1-g_2)dm}\leq \eta,
$$
which proves the result since $\eta>0$ is arbitrary. Similarly, the
proof of~(\ref{eq-limit-jordan}) is performed in standard ways.
\end{proof}

\begin{rem} To illustrate why our results are generalisations of the
  local theorems, let us analyse a particularly simple situation. We
  assume that $d=1$ and that the random variables $X_n$ have
  characteristic functions $\varphi_n$ such that $\varphi_n(t/b_n)$
  converge to $\varphi(t)$ in $L^1(\Rr)$, with $b_n\rightarrow
  +\infty$ (such situations are related, but less general, than the
  classical results discussed in Section~\ref{sec-central-limit} or in
  \cite{BDC} and \cite{St}.) In that case, the density functions $f_n$
  of $X_n/b_n$ exist, are continuous and converge (in $L^1(\Rr)$ and
  uniformly) to a continuous density function $f$. For a bounded
  interval $(\alpha,\beta)$, we obtain
\begin{align*}
  b_n\Pr[X_n\in(\alpha,\beta)]&=b_n
  \Pr[X_n/b_n\in (\alpha/b_n,\beta/b_n)]\\
  &=b_n\int_{\alpha/b_n}^{\beta/b_n} f_n(x)\,dx\rightarrow
  f(0)(\beta-\alpha),
\end{align*} 
by elementary calculus. 
\end{rem}

It may be worth remarking explicitly that it is quite possible for
this theorem to apply in a situation where the constant
$\frac{d\mu}{dm}(0)$ is zero. In this case, the limit gives some
information, but is not as precise as when the constant is non-zero.
For instance, consider the characteristic function
$\varphi(t)=1/(1-it)^2$, which corresponds to the sum $E_1+E_2$ of two
independent exponential random variables with density $e^{-x}dx$ on
$[0,+\infty[$. An easy computation shows that the density for
$\varphi$ itself is $xe^{-x}$ (supported on $[0,+\infty[$), and for
$X_n=n(E_1+E_2)$, we have mod-$\varphi$ convergence with $A_nt=nt$,
leading to the limit
$$
\lim_{n\rightarrow +\infty} n\Pr[\alpha<X_n<\beta]=0
$$
for all $\alpha<\beta$. Note that any other limit $c(\beta-\alpha)$
would not make sense here, since $X_n$ is always non-negative, whereas
there is no constraint on the signs of $\alpha$ and $\beta$...
\par
However, in similar cases, the following general fact will usually
lead to more natural results:

\begin{prop}[Mod-$\varphi$ convergence and shift of the mean]\label{pr-shift-mean}
  Let $d\geq 1$ be an integer, and let $(X_n)$ be a sequence of
  $\Rr^d$-valued random variables such that there is mod$-\varphi$
  convergence with respect to the linear maps $A_n$. Let
  $\alpha\in\Rr^d$ be arbitrary, and let $\alpha_n\in\Rr^d$ be a
  sequence of vectors such that
\begin{equation}\label{eq-calibrate-shift}
  \lim_{n\rightarrow +\infty} \Sigma_n\alpha_n=\alpha,
\end{equation}
for instance $\alpha_n=A_n\alpha$. Then the sequence
$Y_n=X_n-\alpha_n$ satisfies mod-$\psi$ convergence with parameters
$A_n$ for the characteristic function
$$
\psi(t)=\varphi(t)e^{-it\cdot \alpha}.
$$
\par
In particular, for any continuous function $f$ on $\Rr^d$ with compact
support, we have
$$
\lim_{n\rightarrow+\infty} |\det(A_n)|\Er[f(X_n-\alpha_n)]=
\frac{d\mu}{dm}(\alpha)\int_{\Rr^d}{f(x)m(dx)},
$$
where $\mu$ is the probability measure with characteristic function
$\varphi$, and for any bounded Jordan-measurable subset $B\subset
\Rr^d$, we have
\begin{equation}\label{eq-shifted-lim}
\lim_{n\rightarrow+\infty} |\det(A_n)|\Pr[X_n-\alpha_n\in B]=
\frac{d\mu}{dm}(\alpha)
m(B).
\end{equation}
\end{prop}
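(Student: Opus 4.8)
The plan is to verify the three conditions \textbf{H1}, \textbf{H2}, \textbf{H3} of mod-$\varphi$ convergence directly for the shifted sequence $Y_n = X_n - \alpha_n$ with respect to the characteristic function $\psi(t) = \varphi(t)e^{-it\cdot\alpha}$, and then simply invoke Theorem~\ref{th-local-limit} applied to $Y_n$. Condition \textbf{H1} is immediate: the characteristic function of $Y_n$ is $\psi_n(t) = \varphi_n(t)e^{-it\cdot\alpha_n}$, and $\psi$ differs from $\varphi$ only by the unimodular factor $e^{-it\cdot\alpha}$, so $|\psi| = |\varphi|$ is integrable; moreover the density of $\psi$ is the translate $x \mapsto \frac{d\mu}{dm}(x+\alpha)$, so that the measure $\nu$ with characteristic function $\psi$ satisfies $\frac{d\nu}{dm}(0) = \frac{d\mu}{dm}(\alpha)$, which is what produces the stated constant.

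For \textbf{H2}, I would compute
$$
\psi_n(\Sigma_n^* t) = \varphi_n(\Sigma_n^* t)\,e^{-i(\Sigma_n^* t)\cdot\alpha_n}
= \varphi_n(\Sigma_n^* t)\,e^{-it\cdot(\Sigma_n\alpha_n)}.
$$
By \textbf{H2} for $X_n$, the first factor converges continuously at $0$ (equivalently, locally uniformly) to $\varphi(t)$; by the calibration hypothesis~(\ref{eq-calibrate-shift}), $\Sigma_n\alpha_n \to \alpha$, so the exponential factor converges locally uniformly to $e^{-it\cdot\alpha}$. A product of two locally uniformly convergent, locally bounded sequences converges locally uniformly, so $\psi_n(\Sigma_n^* t) \to \varphi(t)e^{-it\cdot\alpha} = \psi(t)$, which is exactly \textbf{H2} for $Y_n$ (with the same maps $A_n$, $\Sigma_n$). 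For \textbf{H3}, the point is that the modulus is unaffected: $|\psi_n(\Sigma_n^* t)| = |\varphi_n(\Sigma_n^* t)|$ and the indicator $\one_{|\Sigma_n^* t|\le k}$ is literally the same, so the quantity in~(\ref{eq-formal-h3}) for $Y_n$ coincides term by term with that for $X_n$, and uniform integrability is inherited verbatim.

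Having checked \textbf{H1}--\textbf{H3}, the conclusion follows by applying Theorem~\ref{th-local-limit} to $Y_n = X_n - \alpha_n$ with the measure $\nu$ (characteristic function $\psi$): for continuous $f$ with compact support,
$$
|\det(A_n)|\,\Er[f(X_n-\alpha_n)] \to \frac{d\nu}{dm}(0)\int_{\Rr^d} f\,dm = \frac{d\mu}{dm}(\alpha)\int_{\Rr^d} f\,dm,
$$
and the statement~(\ref{eq-shifted-lim}) for bounded Jordan-measurable $B$ follows from the corresponding assertion~(\ref{eq-limit-jordan}) in Theorem~\ref{th-local-limit}. I do not anticipate a serious obstacle here; the only point requiring a little care is the identification $\frac{d\nu}{dm}(0) = \frac{d\mu}{dm}(\alpha)$, which I would justify via the inversion formula
$$
\frac{d\nu}{dm}(x) = \Bigl(\frac{1}{2\pi}\Bigr)^d \int_{\Rr^d} e^{-it\cdot x}\,\varphi(t)e^{-it\cdot\alpha}\,m(dt)
= \frac{d\mu}{dm}(x+\alpha),
$$
evaluated at $x = 0$. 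The example with $\varphi(t) = 1/(1-it)^2$ from the preceding remark is precisely the situation this proposition is designed to handle: shifting by $\alpha_n = A_n\alpha = n\alpha$ replaces the degenerate value $\frac{d\mu}{dm}(0) = 0$ by $\frac{d\mu}{dm}(\alpha) = \alpha e^{-\alpha}$ for $\alpha > 0$.
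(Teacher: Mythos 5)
Your proof is correct and takes essentially the same route as the paper's: you verify \textbf{H1}--\textbf{H3} for $Y_n=X_n-\alpha_n$ directly (using $\psi_n(\Sigma_n^*t)=\varphi_n(\Sigma_n^*t)e^{-it\cdot\Sigma_n\alpha_n}$, the calibration limit $\Sigma_n\alpha_n\to\alpha$, and the fact that $|\psi_n|=|\varphi_n|$), identify the density of $\nu$ as the translate of $d\mu/dm$, and invoke Theorem~\ref{th-local-limit}. The only difference is that you spell out each step in more detail than the paper does.
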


\begin{proof}
  This is entirely elementary: $\psi$ is of course integrable and
  since
$$
\Er[e^{itY_n}]=\varphi_n(t)e^{-it\cdot \alpha_n},
$$
we have $\Er[e^{i\Sigma_n^*t\cdot Y_n}]
=\varphi_n(\Sigma_n^*t)e^{-it\cdot \Sigma_n\alpha_n}$, which converges
locally uniformly to $\psi(t)$ by our
assumption~(\ref{eq-calibrate-shift}). Since the modulus of the
characteristic function of $Y_n$ is the same, at any point, as that of
$X_n$, Property {\bf H3} holds for $(Y_n)$ exactly when it does for
$(X_n)$, and hence mod-$\psi$ convergence holds. If $f=d\mu/dm$, the
density of the measure with characteristic function $\psi$ is
$g(x)=f(x+\alpha)$, and therefore the last two limits hold by
Theorem~\ref{th-local-limit}.
\end{proof}

In the situation described before the statement, taking $\alpha_n=cn$
with $c>0$ leads to the (elementary) statement
$$
\lim_{n\rightarrow +\infty} n\Pr[\alpha+cn<X_n<
\beta+cn]=ce^{-c}(\beta-\alpha).
$$
\par
Even when the density of $\mu$ does not vanish at $0$, limits
like~(\ref{eq-shifted-lim}) are of interest for all $\alpha\not=0$.
\par
Another easy and natural extension of the local limit theorem involves
situations where a further linear change of variable is performed:

\begin{prop}[Local limit theorem after linear change of variable]
  Suppose that $(X_n)$ satisfies mod-$\varphi$ convergence relative to
  $A_n$ and $\Sigma_n$.  Suppose that $(T_n)$ is a sequence of linear
  isomorphisms $T_n\colon \Rr^d\rightarrow\Rr^d$ such that
  $T_n^{-1}\rightarrow 0$ and $\Sigma_nT_n\rightarrow 0$. Suppose also
  that the following balancedness condition holds: there is a constant
  $C$ such that $|(\Sigma_nT_n)^*t|\le 1$ implies that $|\Sigma^*_n
  t|\le C$. Then the sequence $T^{-1}_nX_n$ also satisfies the
  conditions of the theorem, and in particular for any bounded Jordan
  measurable set $B$ we have
$$
\frac{|\det(A_n)|}{|\det(T_n)|}\,\Pr\[X_n\in T_nB\]\rightarrow
\frac{d\mu}{dm}(0)\,m(B).
$$
\end{prop}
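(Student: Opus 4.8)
The plan is to deduce the statement directly from Theorem~\ref{th-local-limit} by verifying that the sequence $Y_n = T_n^{-1}X_n$ satisfies mod-$\varphi$ convergence, for the \emph{same} limiting probability measure $\mu$, relative to the linear automorphisms $A_n' = T_n^{-1}A_n$, whose inverses are $\Sigma_n' = \Sigma_n T_n$. Granting this, Theorem~\ref{th-local-limit} applied to $(Y_n)$ gives
$$
|\det(A_n')|\,\Pr[Y_n\in B]\ \longrightarrow\ \frac{d\mu}{dm}(0)\,m(B)
$$
for every bounded Jordan-measurable $B\subset\Rr^d$; since $|\det(A_n')| = |\det(A_n)|/|\det(T_n)|$ and $\{Y_n\in B\} = \{X_n\in T_nB\}$, this is precisely the asserted limit.

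The first step is the bookkeeping identity linking the characteristic functions. Writing $\psi_n$ for the characteristic function of $Y_n$, one has $\psi_n(s) = \varphi_n\bigl((T_n^*)^{-1}s\bigr)$, and therefore
$$
\psi_n\bigl((\Sigma_n')^*t\bigr) = \psi_n\bigl(T_n^*\Sigma_n^*t\bigr) = \varphi_n(\Sigma_n^*t)\qquad\text{for all }t\in\Rr^d.
$$
With this in hand, {\bf H1} and {\bf H2} are immediate: {\bf H1} does not change, the limit law still being $\mu$; and for {\bf H2}, the convergence $\Sigma_n' = \Sigma_nT_n\to 0$ is one of the hypotheses of the proposition, while $\psi_n\bigl((\Sigma_n')^*t\bigr) = \varphi_n(\Sigma_n^*t)$ converges to $\varphi(t)$ continuously at $0$ (equivalently, locally uniformly) precisely because $(X_n)$ satisfies {\bf H2}.

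The balancedness condition enters only in the verification of {\bf H3}. Applying it to $t/k$ and rescaling by $k$ shows that $|(\Sigma_nT_n)^*t|\le k$ implies $|\Sigma_n^*t|\le Ck$, so that $\one_{|(\Sigma_n')^*t|\le k}\le\one_{|\Sigma_n^*t|\le Ck}$ for every $n$ and every $t$. Combining this with the identity $|\psi_n\bigl((\Sigma_n')^*t\bigr)| = |\varphi_n(\Sigma_n^*t)|$, we get, for each $a>0$,
$$
\int_{|t|\ge a}\bigl|\psi_n\bigl((\Sigma_n')^*t\bigr)\bigr|\,\one_{|(\Sigma_n')^*t|\le k}\,m(dt)\ \le\ \int_{|t|\ge a}|\varphi_n(\Sigma_n^*t)|\,\one_{|\Sigma_n^*t|\le Ck}\,m(dt),
$$
and, by {\bf H3} for $(X_n)$ at the level $Ck$ (in the form~\eqref{eq-formal-h3}), the right-hand side tends to $0$ as $a\to\infty$, uniformly in $n$. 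Hence {\bf H3} holds for $(Y_n)$, which is all that remained to be shown.

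I expect the only slightly delicate point to be this last one, namely turning the geometric balancedness hypothesis into the pointwise domination $\one_{|(\Sigma_n')^*t|\le k}\le\one_{|\Sigma_n^*t|\le Ck}$ of indicators, which is exactly what lets one transfer the uniform-integrability condition {\bf H3} from $(X_n)$ to $(Y_n)$; everything else is formal manipulation of linear maps and their transposes. (The hypothesis $T_n^{-1}\to 0$, while natural, is not actually used in the argument.)
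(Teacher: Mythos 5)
Your proposal is correct and follows essentially the same route as the paper: one checks mod-$\varphi$ convergence for $T_n^{-1}X_n$ relative to $\Sigma_n'=\Sigma_nT_n$ via the identity $\psi_n((\Sigma_nT_n)^*t)=\varphi_n(\Sigma_n^*t)$, and uses the (rescaled) balancedness condition to dominate $\one_{|(\Sigma_nT_n)^*t|\le k}$ by $\one_{|\Sigma_n^*t|\le Ck}$, which transfers {\bf H3}. Your write-up merely makes explicit the steps the paper calls ``obvious'' (and your side remark that $T_n^{-1}\to 0$ is not actually used in the verification is accurate).
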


\begin{proof} Let us put $\tilde{X}_n=T_n^{-1}X_n$ and
$\tilde{\varphi}(t)=\Er[\exp(i\,t.T_n^{-1}X_n)]=\varphi_n((T_n^{-1})^*t)$.
Clearly the sequence $\Sigma_nT_n$ tends to zero and
$\Sigma_nT_n(\tilde{X}_n)=\Sigma_nX_n$ tends to $\mu$ in law.  The
only remaining thing to verify is the uniform integrability
condition. Let us look at
$$
\tilde{\varphi}_n((\Sigma_nT_n)^*t)\one_{|(\Sigma_nT_n)^*t|\le
  k}=\varphi_n((\Sigma_n)^*t)\one_{|(\Sigma_nT_n)^*t|\le k}.$$ Because
of the balancedness condition we get that
$$\one_{|(\Sigma_nT_n)^*t|\le k}\le \one_{|(\Sigma_n)^*t|\le C k}.$$
\par
The rest is obvious.
\end{proof}

\begin{rem} The balancedness condition is always satisfied if
  $d=1$.  In dimension $d>1$, there are counterexamples.  In case the
  ratio of the largest singular value of $\Sigma_n$ to its smallest
  singular value is bounded, the balancedness condition is satisfied
  (this is an easy linear algebra exercise). See also \cite{KN} for
  the use of such conditions in mod-gaussian convergence.  To see that
  for $d=2$ it is not necessarily satisfied take the following
  sequences:
$$
\Sigma_n=\(\begin{array}{cc}n^{-1/4} & 0\\  0 &n^{-1/2}
\end{array}\),\quad\quad T_n=\(\begin{array}{cc}0 &n^{1/8}\\ 
  n^{1/8} &0\end{array}\).
$$
\end{rem}

\subsection{Conditions ensuring mod-$\varphi$ convergence}

We now derive other equivalent conditions, or sufficient ones, for
mod-$\varphi$ convergence. First of all, the conditions {\bf H1}, {\bf
  H2}, {\bf H3} have a probabilistic interpretation. We suppose $d=1$
to keep the presentation simple. Instead of taking the indicator
function $\one_{|\Sigma_n^*t|\le k}$, we could have taken the
triangular function $\Delta_k$ defined as
$\Delta_k(0)=1,\Delta_k(2k)=0=\Delta(-2k)$, $\Delta_k(x)=0$ for
$|x|\ge 2k$ and $\Delta_k$ is piecewise linear between the said
points. The function $\Delta_1$ is the characteristic function of a
random variable $Y$ (taken independent of the sequence $X_n$). Hence
we get that the sequence $X_n$ satisfies ${\bf H1,H2,H3}$ if and only
if for each $k\ge 1$, the characteristic functions of
$Z_n=\Sigma_n\(X_n+\frac{1}{k}Y\)$ converge in $L^1(\Rr^d)$ to
$\varphi$. Indeed the characteristic function of $Z_n$ equals
$\varphi_n(\Sigma_n t)\Delta_k(\Sigma_n t)$.  There is no need to use
the special form of the random variable $Y$.
\par
In fact we have the following:

\begin{theo} Suppose that for the sequence $X_n$ the conditions {\bf
    H1,H2} hold.  The condition {\bf H3} holds as soon as there is a
  random variable, $V$, independent of the sequence $X_n$ such that
  for each $\ep>0$, $\Er\[\exp\(i t \Sigma_n\(X_n+\ep V\)\)\]$ tends
  to $\varphi$ in $L^1(\Rr^d)$.
\end{theo}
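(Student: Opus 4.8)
The plan is to show that the hypothesis on $V$ gives us exactly the kind of integrable domination described in the remark following the definition, so that {\bf H3} follows by dominated convergence. First I would fix $k\geq 0$ and choose $\ep>0$ small enough that the characteristic function $\chi$ of $\ep V$ satisfies $|\chi(s)|\geq 1/2$ on the ball $|s|\leq k$ — this is possible since $\chi$ is continuous with $\chi(0)=1$, and the value of $\ep$ may depend on $k$. The characteristic function of $X_n+\ep V$ is $\varphi_n(t)\chi(t)$ by independence, so that of $\Sigma_n(X_n+\ep V)$ is $\varphi_n(\Sigma_n^*t)\chi(\Sigma_n^*t)$. By hypothesis this converges to $\varphi$ in $L^1(\Rr^d)$; in particular the sequence $g_n(t)=\varphi_n(\Sigma_n^*t)\chi(\Sigma_n^*t)$ is uniformly integrable on $\Rr^d$ (an $L^1$-convergent sequence is uniformly integrable).

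Next I would transfer this uniform integrability to the truncated functions $f_{n,k}(t)=\varphi_n(\Sigma_n^*t)\one_{|\Sigma_n^*t|\leq k}$. On the set $\{|\Sigma_n^*t|\leq k\}$ we have $|\chi(\Sigma_n^*t)|\geq 1/2$, hence
$$
|f_{n,k}(t)| = |\varphi_n(\Sigma_n^*t)|\,\one_{|\Sigma_n^*t|\leq k}
\leq 2\,|\varphi_n(\Sigma_n^*t)\chi(\Sigma_n^*t)|\,\one_{|\Sigma_n^*t|\leq k}
\leq 2\,|g_n(t)|.
$$
Since $(g_n)$ is uniformly integrable, so is $(2|g_n|)$, and therefore so is the dominated family $(f_{n,k})_n$; this is precisely condition~(\ref{eq-formal-h3}), because for $a$ large, $\sup_n\int_{|t|\geq a}|f_{n,k}|\,m(dt)\leq 2\sup_n\int_{|t|\geq a}|g_n|\,m(dt)\to 0$. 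As $k\geq 0$ was arbitrary, {\bf H3} holds.

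The only real point requiring care — and the step I would flag as the main obstacle — is the lower bound $|\chi(\Sigma_n^*t)|\geq 1/2$ on $\{|\Sigma_n^*t|\leq k\}$ when $V$ is genuinely $\Rr^d$-valued: one needs $\chi$ continuous at $0$ with $\chi(0)=1$, which holds for any characteristic function, and then continuity furnishes a radius $\delta$ on which $|\chi|\geq 1/2$; choosing $\ep$ small scales this radius and lets us cover the ball of radius $k$. (Equivalently, one may simply replace $\ep V$ by $\tfrac{1}{k}V$ and absorb the index into the hypothesis.) Everything else is the routine observation that $L^1$-convergence implies uniform integrability and that uniform integrability is inherited by dominated families, together with the translation between the uniform-integrability formulation of {\bf H3} and its quantitative form~(\ref{eq-formal-h3}) already recorded in the statement of the definition.
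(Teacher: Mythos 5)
Your proof is correct and is essentially the paper's own argument: both pass to the characteristic function $\psi$ of $V$, use the $L^1$-convergence hypothesis to get uniform integrability of $\varphi_n(\Sigma_n^*t)\psi(\ep\Sigma_n^*t)$, and then bound $|\varphi_n(\Sigma_n^*t)|\one_{|\Sigma_n^*t|\le k}$ by $2|\varphi_n(\Sigma_n^*t)\psi(\ep\Sigma_n^*t)|$ on the region where $|\psi|\ge 1/2$, choosing $\ep$ small in terms of $k$. The only (immaterial) difference is the order of quantifiers: the paper fixes $\delta$ with $|\psi|\ge 1/2$ on $|t|\le\delta$ and obtains the cutoff $k=\delta/\ep$, whereas you fix $k$ first and then pick $\ep$.
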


\begin{proof} 
  Let $\psi(t)=\Er\[\exp\(i t\cdot V\)\]$ be the characteristic
  function of $V$. The hypothesis of the theorem is equivalent to the
  property that for each $\ep>0$, the sequence
$$
\varphi_n(\Sigma_n^* t)\psi(\ep \Sigma_n^* t),
$$
is uniformly integrable. Let $\delta>0$ be such that for $|t|\le
\delta$, $|\psi(t)|\ge 1/2$.  Then the uniform integrability of the
above mentioned sequence implies for each $\ep>0,$ the uniform
integrability of the sequence
$$
|\varphi_n(\Sigma_n^* t)| \one_{\ep|\Sigma_n^*t|\le\delta}\le
2\left|\varphi_n(\Sigma_n^* t)\psi(\ep \Sigma_n^* t)\right|.
$$
\par
This ends the proof.
\end{proof}

\begin{rem}\rm 
  We suppose that $d=1$.  For higher dimensions the discussion can be
  made along the same lines but it is much more tricky.  Polya's
  theorem says that if $\gamma\colon \Rr_+\rightarrow \Rr_+$ is a
  convex function such that $\gamma(0)=1$, $\lim_{x\rightarrow
    +\infty}\gamma(x)=0$, then there is a random variable $Y$ (which
  can be taken to be independent of the sequence $X_n$, such that
  $\varphi_Y(t)=\gamma(|t|)$.  The characteristic function of
  $\Sigma_n(X_n+Y)$ is then $\gamma(|\Sigma_n t|)\varphi_n(\Sigma_n
  t)$ and hence is a uniformly integrable sequence. Since
  $\Vert\Sigma_n\Vert\rightarrow 0$, we see that $\Sigma_n(X_n+Y)$
  tends in law to $\mu$ with characteristic function $\varphi$. The
  convergence is much stronger than just weak convergence.  In fact
  the random variables $\Sigma_n(X_n+Y)$ have densities and because
  the characteristic functions tend in $L^1$ to $\varphi$, the
  densities of $\Sigma_n(X_n+Y)$ converge to the density of $\mu$ in
  the topology of $L^1(\Rr)$.
\end{rem}

\begin{rem}\rm Adding a random variable $Y$ can be seen as a
  regularisation (mollifier). Indeed adding an independent random
  variable leads to a convolution for the densities.  In our context
  this means that the distribution of $X_n$ is convoluted with an
  integrable kernel (the density of $Y$).  The regularity of the law
  of $Y$ is then passed to the law of $X_n+Y$.  In probability theory
  such a mollifier is nothing else than adding an independent random
  variable with suitable properties.
\end{rem}

We can go one step further and replace the condition for each $k$ by a
condition where we use just one random variable.  This is the topic of
the next theorem.

\begin{theo}\label{th-modphi-equivalent} 
  Suppose that the hypotheses {\bf H1,H2} hold. Then {\bf H3} is also
  equivalent to either of the following:
\par
\emph{(1)} There exists a non-increasing function $\gamma\colon
\Rr_+\rightarrow \Rr_+$, such that $\gamma(0)=1$, $0<\gamma\le 1$,
$$
\lim_{x\rightarrow +\infty}\gamma(x)=0
$$
and such that the sequence $\gamma(|\Sigma_n^*t|)\varphi(\Sigma_n^*t)$
is uniformly integrable.
\par
\emph{(2)} There exists a non-increasing convex function $\gamma\colon
\Rr_+\rightarrow \Rr_+$, such that $\gamma(0)=1$, $0<\gamma\le 1$, 
$$
\lim_{x\rightarrow +\infty}\gamma(x)=0
$$
and such that the sequence $\gamma(|\Sigma_n^*t|)\varphi(\Sigma_n^*t)$
is uniformly integrable.
\end{theo}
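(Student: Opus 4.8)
The plan is to establish the cycle of implications $(2)\Rightarrow(1)\Rightarrow{\bf H3}\Rightarrow(2)$; since ${\bf H1}$ and ${\bf H2}$ are standing hypotheses, this yields all the equivalences. (I read the products displayed in (1) and (2) with the characteristic functions $\varphi_n$ of $X_n$.) The implication $(2)\Rightarrow(1)$ will require no argument at all, since a non-increasing convex $\gamma$ as in (2) is in particular admissible in (1). For $(1)\Rightarrow{\bf H3}$, I would fix $\gamma$ as in (1) and $k\ge0$; since $\gamma$ is non-increasing and positive, $\gamma(|\Sigma_n^*t|)\ge\gamma(k)>0$ on $\{|\Sigma_n^*t|\le k\}$, so that
$$
|\varphi_n(\Sigma_n^*t)|\,\one_{|\Sigma_n^*t|\le k}\le\gamma(k)^{-1}\,\gamma(|\Sigma_n^*t|)\,|\varphi_n(\Sigma_n^*t)|
$$
for every $n$. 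Thus $f_{n,k}$ is dominated, uniformly in $n$, by a fixed multiple of the uniformly integrable sequence appearing in (1); this transfers both the uniform $L^1$ bound and the tail estimate~\eqref{eq-formal-h3}, so ${\bf H3}$ holds.

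The substantive direction is ${\bf H3}\Rightarrow(2)$, where one must manufacture a single convex weight out of the countably many conditions contained in ${\bf H3}$. From ${\bf H3}$ I would extract, for each integer $m\ge1$, the finite constant $C_m=\sup_n\int_{\Rr^d}|\varphi_n(\Sigma_n^*t)|\,\one_{|\Sigma_n^*t|\le m}\,m(dt)$ (uniform integrability forces $L^1$-boundedness), which is non-decreasing in $m$, together with the quantities $\varepsilon_m(a)=\sup_n\int_{|t|\ge a}|\varphi_n(\Sigma_n^*t)|\,\one_{|\Sigma_n^*t|\le m}\,m(dt)$, which tend to $0$ as $a\to\infty$ for each fixed $m$ by~\eqref{eq-formal-h3}. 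I would then choose a non-increasing convex function $\gamma\colon\Rr_+\to(0,1]$ with $\gamma(0)=1$, $\gamma(x)\to0$, and $\gamma(m)\le 2^{-m}/\max(1,C_{m+1})$ for $m\ge1$. Such a $\gamma$ exists because a convex non-increasing sequence anchored at the value $1$ may be prescribed to decay as fast as one wishes after its first step: one takes the successive differences $\gamma(m)-\gamma(m+1)$ to form a non-increasing nonnegative sequence of total mass $1$ with $\gamma(m)-\gamma(m+1)\le 2^{-m-2}/\max(1,C_{m+1})$ for $m\ge1$, and then interpolates linearly between the integers. With this choice $\gamma(m)C_{m+1}\le 2^{-m}$, hence $\sum_{m\ge0}\gamma(m)C_{m+1}<\infty$.

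Finally I would verify that $g_n(t):=\gamma(|\Sigma_n^*t|)\,|\varphi_n(\Sigma_n^*t)|$ is uniformly integrable. Since $0<\gamma\le1$ one has $g_n\le 1$ and $g_n\le|\varphi_n(\Sigma_n^*t)|$ pointwise, so only the uniform $L^1$ bound and the tail need checking. Partitioning $\Rr^d$ into the shells $\{m\le|\Sigma_n^*t|<m+1\}$ and using $\gamma(|\Sigma_n^*t|)\le\gamma(m)$ there gives $\int_{\Rr^d}g_n\,m(dt)\le\sum_{m\ge0}\gamma(m)C_{m+1}<\infty$, uniformly in $n$; and, given $\varepsilon>0$, choosing $K$ with $\sum_{m\ge K}\gamma(m)C_{m+1}<\varepsilon/2$ and then $a$ with $\varepsilon_K(a)<\varepsilon/2$, and bounding $g_n$ by $|\varphi_n(\Sigma_n^*t)|$ on $\{|\Sigma_n^*t|\le K\}$ and by the shell sum on $\{|\Sigma_n^*t|>K\}$, yields $\int_{|t|\ge a}g_n\,m(dt)\le\varepsilon_K(a)+\sum_{m\ge K}\gamma(m)C_{m+1}<\varepsilon$ for all $n$. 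This gives (2) and closes the cycle. The hard part will be exactly this construction of $\gamma$: it has to be simultaneously convex, non-increasing, $(0,1]$-valued, equal to $1$ at the origin and vanishing at infinity, while making $\sum\gamma(m)C_{m+1}$ converge for a sequence $(C_m)$ over which we have no quantitative control — the saving grace being that a convex non-increasing sequence starting from $1$ still has room to drop arbitrarily fast after its first step.
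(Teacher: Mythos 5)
Your reading of the typo $\varphi(\Sigma_n^*t)$ as $\varphi_n(\Sigma_n^*t)$ is the intended one (the paper's own proof works with $\varphi_n$), and your argument is correct and follows essentially the same route as the paper's: the easy direction is the identical one-line domination, and the converse constructs a convex weight $\gamma$ calibrated so that the shell quantities $\gamma(m)C_{m+1}$ are summable, then verifies uniform integrability by the same split of $\Rr^d$ into $\{|\Sigma_n^*t|\le K\}$ (where {\bf H3} is invoked) and $\{|\Sigma_n^*t|>K\}$ (where the shell-sum bound is used). The only difference is cosmetic: you build $\gamma$ piecewise-linearly from a prescribed non-increasing sequence of differences, whereas the paper takes $\gamma(x)=\alpha\int_x^\infty\exp\bigl(-\int_0^u g(s)\,ds\bigr)\,du$ with $g(x)=\sup_n\Vert\varphi_n(\Sigma_n^*t)\one_{|\Sigma_n^*t|\le x+1}\Vert_{L^1}$; both choices make the same sum converge, and both are valid.
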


\begin{proof} It is quite clear that (1) or (2) imply {\bf H3}, since
for any $k>0$, we obtain
$$
|\varphi_n(\Sigma_n^* t)|\one_{|\Sigma_n^* t|\le k}\le
\frac{1}{\gamma(k)}\gamma(|\Sigma_n^*t|)|\varphi(\Sigma_n^*t)|,
$$
and therefore the desired uniform integrability.
\par
For the reverse, it is enough to show that {\bf H3} implies (2), since
(1) is obviously weaker. For $x\ge 0$ we define
$$
g(x)=\sup_n\Vert\varphi_n(\Sigma_n^*t)\one_{|\Sigma_n^* t|\le
  x+1}\Vert_{L^1(\Rr^d)}.
$$ 
\par
The function $g$ is clearly non-decreasing.  Let us first observe that
there is a constant $C>0$ such that for $x$ big enough,
$\exp\(-\int_0^xg(s)\,ds\)\le \exp(-Cx)$. We define
$$
\gamma(x)=\alpha \int_x^\infty\exp\(-\int_0^u g(s)\,ds\)\,du,
$$ 
where $\alpha$ is chosen so that $\gamma(0)=1$ (since the integrals
converge, this function is well defined).
\par
The function $\gamma$ is also convex and tends to zero at $\infty$.
Furthermore $$\gamma(x)\le \alpha \int_x^\infty
\frac{g(u)}{g(x)}\exp\(-\int_0^u g(s)\,ds\)\,du \le
\frac{\alpha}{g(x)}\exp\(-\int_0^xg(s)\,ds\)$$ from which it follows
that $\int_0^\infty \gamma(x) g(x)\,dx<\infty$. We now claim that
$\gamma(|\Sigma_n^*t|)\varphi_n(\Sigma_n^* t)$ is uniformly
integrable. Because the sequence is uniformly bounded we only need to
show that for each $\ep>0$ there is a $k$ such that
$$
\int_{\Rr^d} \gamma(|\Sigma_n^*t|)|\varphi_n(\Sigma_n^*
t)|\one_{|t|\ge k}\,m(dt)\le \ep
$$
for all $n\geq 1$.
\par
For $k,K$ integers, we split the integral as follows
\begin{align*}
  \int_{\Rr^d} \gamma(|\Sigma_n^*t|)|\varphi_n(\Sigma_n^*
  t)|\one_{|t|\ge k}\,m(dt)
  &\le \int_{\Rr^d} \gamma(|\Sigma_n^*t|)|\varphi_n(\Sigma_n^* t)|\one_{|t|\ge k}\one_{|\Sigma_n^*t|\le K}\,m(dt)\\
  &\quad\quad + \int_{\Rr^d}\gamma(|\Sigma_n^*t|)|\varphi_n(\Sigma_n^* t)|\one_{|t|\ge k}\one_{|\Sigma_n^*t| > K}\,m(dt)\\
  &\le \int_{\Rr^d} |\varphi_n(\Sigma_n^* t)|\one_{|t|\ge k}\one_{|\Sigma_n^*t|\le K}\,m(dt)\\
  &\quad\quad + \int_{\Rr^d}\gamma(|\Sigma_n^*t|)|\varphi_n(\Sigma_n^*
  t)|\one_{|\Sigma_n^*t| > K}\,m(dt).
\end{align*}
\par
The last term is dominated as follows:
\begin{align*}
  \int_{\Rr^d}\gamma(|\Sigma_n^*t|)|\varphi_n(\Sigma_n^* t)|\one_{|\Sigma_n^*t| > K}\,m(dt)
  &\le \sum_{l\ge K} \gamma(l)\int_{l\le |\Sigma_n^* t|\le l+1}|\varphi_n(\Sigma_n^* t)|\,m(dt)\\
  &\le \sum_{l\ge K}\gamma(l) g(l)\\
  &\le \alpha\sum_{l\ge K}\exp\(-\int_0^l g(s)\,ds\),
\end{align*}
which can be made smaller than $\ep/2$ by taking $K$ big enough.  Once
$K$ fixed we use the uniform integrability of the sequence
$\varphi_n(\Sigma_n^* t)|\one_{|\Sigma_n^*t|\le K}$ and take $k$ big
enough so that we get for each $n$:
$$
\int_{\Rr^d} |\varphi_n(\Sigma_n^* t)|\one_{|t|\ge k}
\one_{|\Sigma_n^*t|\le K}\,m(dt)\le \ep/2.
$$
This completes the proof.  \end{proof}

In particular, we get a sufficient condition:

\begin{cor}\label{cor-sufficient-condition} 
  Suppose that the sequence $X_n$ satisfies the following:
\begin{enumerate}
\item {\bf H1,H2} hold;
\item There is a non-decreasing function $c\colon
  \Rr\rightarrow\Rr_+$, $c(0)=1$ as well as an integrable function
  $h\colon \Rr^d\rightarrow\Rr_+$ such that $|\varphi_n(t)|\le
  h(A_n^*t)\,c(|t|)$ for all $n$ and $t\in\Rr^d$.
\end{enumerate}
\par
Then the property {\bf H3} holds as well.
\end{cor}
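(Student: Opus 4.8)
The plan is to verify condition {\bf H3} directly in its quantitative form~\eqref{eq-formal-h3}, by exhibiting, for each fixed $k$, a single integrable majorant for the whole family $f_{n,k}$. The one computation that makes everything work is the identity $A_n^*\Sigma_n^* = \mathrm{Id}$ on $\Rr^d$: since $\Sigma_n = A_n^{-1}$ we have $\Sigma_n^* = (A_n^*)^{-1}$, so $A_n^*\Sigma_n^* t = t$ for every $t$. Substituting $t\mapsto\Sigma_n^* t$ in the hypothesis $|\varphi_n(t)|\le h(A_n^* t)\,c(|t|)$ therefore turns the ``moving'' argument $h(A_n^* t)$ into the fixed function $h(t)$, and leaves
$$
|\varphi_n(\Sigma_n^* t)| \le h(t)\, c(|\Sigma_n^* t|)
\qquad\text{for all } n\ge 1 \text{ and all } t\in\Rr^d.
$$

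Next I would fix $k\ge 0$ and restrict attention to the region $|\Sigma_n^* t|\le k$. Since $c$ is non-negative and non-decreasing, on that region $c(|\Sigma_n^* t|)\le c(k)$, so
$$
|\varphi_n(\Sigma_n^* t)|\,\one_{|\Sigma_n^* t|\le k} \le c(k)\, h(t),
$$
a bound independent of $n$. As $h\in L^1(\Rr^d)$, this uniform domination by an integrable function immediately yields the uniform integrability of $f_{n,k}$; concretely, for every $a>0$,
$$
\sup_{n\ge 1}\int_{|t|\ge a}|\varphi_n(\Sigma_n^* t)|\,\one_{|\Sigma_n^* t|\le k}\,m(dt)
\le c(k)\int_{|t|\ge a} h(t)\,m(dt),
$$
and the right-hand side tends to $0$ as $a\to\infty$ by dominated convergence, which is exactly~\eqref{eq-formal-h3}. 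Since $k$ was arbitrary, {\bf H3} holds (and hence, together with the assumed {\bf H1}, {\bf H2}, mod-$\varphi$ convergence holds for $X_n$).

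There is no genuine obstacle here: the statement is essentially a clean packaging of the domination remark following~\eqref{eq-dominated}, the only point requiring a moment's care being the bookkeeping with transposes and inverses that gives $A_n^*\Sigma_n^* = \mathrm{Id}$ — equivalently, that the change of variables $s = A_n^* t$ straightens out the argument of $h$. One could alternatively try to route the argument through Theorem~\ref{th-modphi-equivalent}, but the hypothesis provides a non-decreasing $c$ rather than a non-increasing $\gamma$, so the direct verification above is the more natural path.
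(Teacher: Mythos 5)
Your proof is correct, and the key inequality $|\varphi_n(\Sigma_n^*t)|\le h(t)\,c(|\Sigma_n^*t|)$ is exactly the one the paper uses. The only difference is packaging: the paper cites Theorem~\ref{th-modphi-equivalent}(1) with $\gamma = 1/c$ (which is non-increasing with $\gamma(0)=1$ and $0<\gamma\le 1$, precisely because $c$ is non-decreasing with $c(0)=1$), whereas you inline the ``easy direction'' of that theorem by bounding $c(|\Sigma_n^*t|)\le c(k)$ on the region $|\Sigma_n^*t|\le k$ and dominating by $c(k)\,h(t)$. Your final remark --- that routing through Theorem~\ref{th-modphi-equivalent} is awkward because the hypothesis gives a non-decreasing $c$ rather than a non-increasing $\gamma$ --- slightly misses the mark, since the reciprocal converts one into the other; but since the easy direction of that theorem amounts to the same one-line division you performed, nothing is lost and your direct verification is clean and self-contained.
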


\begin{proof} This is clear from the previous theorem, since
  $|\varphi_n(t)|\le h(A_n^*t)\,c(|t|)$ for all $t$ implies that for
  all $t$ we have
$$
\frac{|\varphi_n(\Sigma_n^*t)|}{c(|\Sigma_n^*(t)|)}\le h(t),
$$
which verifies (1) in Theorem~\ref{th-modphi-equivalent}.
\end{proof}


\section{Applications}

In this section, we collect some examples of mod-$\varphi$
convergence, for various types of limits $\varphi$, and therefore
derive local limit theorems. Some of these results are already known,
and some are new. It is quite interesting to see all of them handled
using the relatively elementary framework of the previous section. The
coming subsections are mostly independent of each other; the first few
are of probabilistic nature, while the last ones involve arithmetic
considerations. 

\subsection{The Central Limit Theorem and convergence to stable laws}\label{sec-central-limit}

In this section we suppose that $(X_n)_{n\ge 1}$ is a sequence of
independent identically distributed random variables.  The central
limit theorem deals with convergence in law of expressions of the form
$\frac{X_1+\ldots+X_n}{b_n}-a_n$, where $b_n$ are normalising
constants. We will suppose without further notice that the random
variables are {\it symmetric} so that we can suppose $a_n=0$.  The
possible limit laws have characteristic functions of the form
$\exp\(-c\,|u|^p\)$, where $0< p\le 2$ and where $c>0$.  For
information regarding this convergence we refer to
Lo\`eve~\cite{Lo}. The basis for the theory is Karamata's theory of
regular variation. In this section we are interested in expressions of
the form $\lim b_n\Pr[(X_1+\ldots+X_n)\in B]$ for suitably bounded
Borel sets $B$.
\par
For the case $\Er[X^2]<\infty$, the problem was solved by
Shepp~\cite{Sh}.  The multidimensional square integrable case was
solved by Borovkov and Mogulskii~\cite{BM} and Mogulskii~\cite{Mo}.
The case $p<2$ was solved by Stone~\cite{St} and at the same by
Bretagnolle and Dacunha-Castelle~\cite{BDC} (see also Ibragimov and
Linnik~\cite{IL}). Such theorems are known as \emph{local limit
  theorems}.

\begin{theo}\label{th-classical} 
  Suppose that the non-lattice random variable $X$ is symmetric and
  that it is in the domain of attraction of a stable law with exponent
  $p$. More precisely we suppose that $\frac{X_1+\ldots X_n}{b_n}$
  converges in law to a probability distribution with characteristic
  function $\exp(-|t|^p)$, $0<p\le 2$. Then for Jordan-measurable
  bounded Borel sets, we have
$$
\lim_{n\rightarrow+\infty} 
b_n\Pr[(X_1+\ldots+X_n)\in B]=c_p\, m(B),
$$ 
where $c_p=\frac{1}{2\pi}\int_{-\infty}^{+\infty}\exp(-|t|^p)\,dt$.
Suppose moreover that $0<\tau_n\rightarrow +\infty$ in such a way that
$\frac{b_n}{\tau_n}\rightarrow +\infty$, then
$$
\lim_{n\rightarrow +\infty} 
\frac{b_n}{\tau_n}\Pr\[\frac{X_1+\ldots+X_n}{\tau_n}\in B\]
= c_p\, m(B).
$$
\end{theo}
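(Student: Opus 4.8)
The plan is to exhibit the partial sums $S_n=X_1+\cdots+X_n$ as a mod-$\varphi$ convergent sequence, with $\varphi(t)=e^{-|t|^p}$ the characteristic function of the limiting stable law $\mu$, with $A_n$ the scalar dilation $t\mapsto b_nt$ (so that $\Sigma_n=A_n^{-1}$ is $t\mapsto t/b_n$ and $|\det A_n|=b_n$), and then to read off both assertions from Theorem~\ref{th-local-limit} and the Proposition on the local limit theorem after a linear change of variable. Hypotheses {\bf H1} and {\bf H2} are immediate: $e^{-|t|^p}$ lies in $L^1(\Rr)$ for every $p>0$; the standing assumption is exactly that $\Sigma_n(S_n)=S_n/b_n$ converges in law to $\mu$; and $b_n\to+\infty$ — a standard feature of domains of attraction, see Lo\`eve~\cite{Lo} — so $\Sigma_n\to 0$. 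Once {\bf H3} is in hand, the first conclusion is Theorem~\ref{th-local-limit}, since $|\det A_n|=b_n$ and $\frac{d\mu}{dm}(0)=\frac1{2\pi}\int_{\Rr}e^{-|t|^p}\,dt=c_p$.

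The substance of the proof is therefore the verification of {\bf H3}, i.e.\ the uniform integrability of $t\mapsto\varphi_{S_n}(t/b_n)\one_{|t/b_n|\le k}=\varphi_X(t/b_n)^n\one_{|t/b_n|\le k}$, and here I would invoke Karamata's theory of regular variation. The characterization of the domain of attraction of a stable law of index $p$ says that $V(u):=1-\varphi_X(u)$ (a nonnegative function, $X$ being symmetric) is regularly varying of index $p$ as $u\to0$; the choice of $b_n$ then forces $nV(1/b_n)\to1$, and consequently $b_n$ grows at most polynomially, $b_n=n^{1/p+o(1)}$. I would split the relevant range $|t|\le kb_n$ into two pieces. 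On $|t|\le\delta b_n$ with $\delta$ small (so also $\varphi_X>0$ there), Potter's bounds give $nV(t/b_n)\gg|t|^{p-\ep}$ uniformly in $n$ for $|t|\ge1$ (with $\ep\in(0,p)$ fixed), so that $|\varphi_X(t/b_n)|^n\le e^{-nV(t/b_n)}$ is dominated on this piece by a fixed integrable function of $t$ alone. On the complementary piece $\delta b_n\le|t|\le kb_n$, where $u=t/b_n$ ranges over the compact set $\{\delta\le|u|\le k\}$, the \emph{non-lattice} hypothesis enters decisively: $|\varphi_X|$ is continuous and strictly less than $1$ off the origin, hence bounded by some $\rho<1$ on that compact set, whence $|\varphi_X(t/b_n)|^n\le\rho^n$, and the measure of this piece, which is $\le 2kb_n$, is crushed by the factor $\rho^n$ because $b_n$ grows only polynomially. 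Combining these two estimates with the trivial observation that for each of the finitely many small $n$ the function $\varphi_X(\cdot/b_n)^n\one_{|\cdot/b_n|\le k}$ is just a fixed compactly supported function yields $\lim_{a\to+\infty}\sup_n\int_{|t|\ge a}|\varphi_X(t/b_n)|^n\one_{|t/b_n|\le k}\,m(dt)=0$, which is {\bf H3}. I expect the genuinely fiddly point to be the uniform-in-$n$ bookkeeping in this last step — in particular, checking that the $\rho^nb_n$-contribution is uniformly small as $a\to+\infty$, which works because the piece $\delta\le|t/b_n|\le k$ meets $\{|t|\ge a\}$ only for those $n$ with $b_n\ge a/k$, together with $\rho^nb_n\to0$.

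For the second assertion I would apply the Proposition on the local limit theorem after a linear change of variable to the mod-$\varphi$ sequence $S_n$ with the scalar maps $T_n\colon t\mapsto\tau_nt$: the requirements $T_n^{-1}\to0$ and $\Sigma_nT_n\to0$ are precisely $\tau_n\to+\infty$ and $\tau_n/b_n\to0$, the balancedness condition is automatic since $d=1$, and the conclusion of that proposition reads $\frac{b_n}{\tau_n}\Pr[S_n\in\tau_nB]=\frac{b_n}{\tau_n}\Pr[S_n/\tau_n\in B]\to\frac{d\mu}{dm}(0)\,m(B)=c_p\,m(B)$, as required.
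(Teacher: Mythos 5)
Your proposal is correct and follows essentially the same route as the paper: both verify mod-$\varphi$ convergence via {\bf H1}, {\bf H2}, {\bf H3}, and both split the check of {\bf H3} into a bulk estimate (small $|t/b_n|$, driven by regular variation of $1-\varphi_X$ near the origin) and a tail estimate ($|t/b_n|$ in a compact set away from $0$, driven by the non-lattice hypothesis and the exponential decay $|\varphi_X|^n\le\rho^n$ against polynomial growth of $b_n$), these being precisely what the paper packages as {\bf H4'} and {\bf H3'}; the only difference is that you flesh out {\bf H4'} directly via Potter's bounds where the paper simply cites Gnedenko's condition, and you make explicit the appeal to the linear-change-of-variable proposition for the second assertion, which the paper leaves implicit.
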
 

In order to prove this theorem, we first observe that, when {\bf H1}
and {\bf H2} are satisfied, the condition {\bf H3} of uniform
integrability is equivalent with classical conditions that arise in
the current context.

\begin{theo}
  Under the hypotheses {\bf H1,H2}, the hypothesis {\bf H3} is
  equivalent to the validity of the following two conditions:
\begin{itemize}
\item{\bf H3'.} For all $k\ge \ep>0$, we have
$$
\lim_{n\rightarrow +\infty}|\det(A_n)|\,\int_{\ep\le
    |t|\le k} |\varphi_n(t)|\,m(dt)=0.
$$
\item{\bf H4'.} For all $\eta>0$. there is $a\ge 0, \ep>0$ such that
$$\limsup_{n\rightarrow +\infty} \int_{a\le|t|;|\Sigma^*_nt|\le
  \ep}|\varphi_n(\Sigma_n^*t)|\,m(dt)\le\eta.
$$
\end{itemize}
\end{theo}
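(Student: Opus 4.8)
The plan is to translate everything into ``$\Sigma_n$-coordinates'' and then to play the three integrability conditions against one another. Performing the linear change of variables $t=\Sigma_n^*s$ (so that $m(dt)=|\det(\Sigma_n)|\,m(ds)$ and $|\det(A_n)|=|\det(\Sigma_n)|^{-1}$) turns the integral appearing in \textbf{H3'} into
$$
|\det(A_n)|\int_{\ep\le|t|\le k}|\varphi_n(t)|\,m(dt)=\int_{\ep\le|\Sigma_n^*s|\le k}|\varphi_n(\Sigma_n^*s)|\,m(ds),
$$
whereas \textbf{H3} and \textbf{H4'} are already expressed through the function $s\mapsto|\varphi_n(\Sigma_n^*s)|$ and the nested regions $\{|\Sigma_n^*s|\le k\}$ and $\{|s|\ge a\}$. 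It is worth recording at the outset that, for fixed $k$, the set $\{s:|\Sigma_n^*s|\le k\}$ has finite Lebesgue measure, equal to $|\det(A_n)|$ times the volume of the ball of radius $k$; hence $s\mapsto|\varphi_n(\Sigma_n^*s)|\one_{|\Sigma_n^*s|\le k}$ is integrable for each fixed $n$, and in particular $\int_{|s|\ge a,\,|\Sigma_n^*s|\le k}|\varphi_n(\Sigma_n^*s)|\,m(ds)\to 0$ as $a\to+\infty$ for each fixed $n$, by dominated convergence. This remark is what lets us upgrade \emph{limsup}-in-$n$ statements to \emph{supremum}-in-$n$ statements.

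Next I would prove \textbf{H3}$\Rightarrow$\textbf{H3'}$+$\textbf{H4'}. For \textbf{H4'}, given $\eta>0$ one applies \textbf{H3} with $k=1$ to find $a$ with $\sup_n\int_{|s|\ge a,\,|\Sigma_n^*s|\le 1}|\varphi_n(\Sigma_n^*s)|\,m(ds)\le\eta$, and takes $\ep=1$. For \textbf{H3'}, fix $k\ge\ep>0$ and split the region $\{\ep\le|\Sigma_n^*s|\le k\}$ according to whether $|s|\ge a$ or $|s|<a$. On the piece $|s|<a$ one has $|\Sigma_n^*s|\le\|\Sigma_n^*\|\,a<\ep$ once $n$ is large enough --- this is where \textbf{H2}, i.e.\ $\Sigma_n\to 0$, enters --- so that piece is empty and contributes nothing for all large $n$; the piece $|s|\ge a$ is bounded above by $\int_{|s|\ge a,\,|\Sigma_n^*s|\le k}|\varphi_n(\Sigma_n^*s)|\,m(ds)$, which by \textbf{H3} can be made $\le\delta$ uniformly in $n$ by choosing $a$ large. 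Letting $\delta\to 0$ yields \textbf{H3'}.

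Finally, for \textbf{H3'}$+$\textbf{H4'}$\Rightarrow$\textbf{H3}, fix $k>0$ and $\eta>0$ and, by \textbf{H4'}, choose $a_0\ge 0$ and $\ep>0$ (which we may take $\le k$, shrinking $\ep$ if needed) with $\limsup_n\int_{|s|\ge a_0,\,|\Sigma_n^*s|\le\ep}|\varphi_n(\Sigma_n^*s)|\,m(ds)\le\eta/2$. For any $a\ge a_0$,
$$
\int_{|s|\ge a,\,|\Sigma_n^*s|\le k}|\varphi_n(\Sigma_n^*s)|\,m(ds)\le\int_{|s|\ge a_0,\,|\Sigma_n^*s|\le\ep}|\varphi_n(\Sigma_n^*s)|\,m(ds)+\int_{\ep\le|\Sigma_n^*s|\le k}|\varphi_n(\Sigma_n^*s)|\,m(ds);
$$
the second term tends to $0$ in $n$ by \textbf{H3'}, and the first has $\limsup_n\le\eta/2$, so the left-hand side is $\le\eta$ for all $n\ge N$, with $N$ independent of $a$. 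For the finitely many remaining indices $n<N$ I would invoke the integrability remark of the first paragraph to enlarge $a$ beyond $a_0$ so that the left-hand side is $\le\eta$ for those $n$ as well; then $\sup_n\int_{|s|\ge a,\,|\Sigma_n^*s|\le k}|\varphi_n(\Sigma_n^*s)|\,m(ds)\le\eta$, and arbitrariness of $k$ and $\eta$ gives \textbf{H3}. The only genuinely delicate point throughout is exactly this interplay between the uniform-in-$n$ control demanded by \textbf{H3} and the merely asymptotic control (a limsup, resp.\ a pointwise limit) supplied by \textbf{H4'} and \textbf{H3'}, which is why peeling off a fixed finite initial segment of the sequence --- legitimate because each individual $|\varphi_n|$ is integrable over $\{|\Sigma_n^*s|\le k\}$ --- is the crucial manoeuvre; everything else is a routine change of variables and splitting of domains of integration.
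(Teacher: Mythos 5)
Your proof is correct and follows essentially the same route as the paper: the same change of variables, the same domain-splitting via a radius $a$ and a threshold $\ep$, and the same crucial manoeuvre of upgrading a limsup-in-$n$ statement to a sup-in-$n$ statement by enlarging $a$ to accommodate the finitely many initial indices (using that each $|\varphi_n(\Sigma_n^*\cdot)|\one_{|\Sigma_n^*\cdot|\le k}$ is individually integrable). The one minor streamlining is in \textbf{H3}$\Rightarrow$\textbf{H4'}: you read \textbf{H4'} off directly from \textbf{H3} with $k=\ep=1$, whereas the paper goes through the pointwise convergence $\varphi_n(\Sigma_n^*t)\one_{|\Sigma_n^*t|\le k}\to\varphi(t)$ and the integrability of $\varphi$; both are fine, and yours is a bit more economical.
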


\begin{proof} First suppose that {\bf H3} holds, i.e., for each $k>0$,
$\varphi_n(\Sigma_n^*t)\one_{|\Sigma_n^*t|\le k}$ is uniformly
integrable. Since $\Sigma_n\rightarrow 0$, we immediately
get 
$$
\lim_n\int_\ep^k
|\det(A_n)\varphi_n(t)|\,m(dt)=\int_{\ep\le|\Sigma_n^*s|\le
  k}|\varphi_n(\Sigma_n^*s)|\,m(ds)\rightarrow 0,
$$ 
which is {\bf H3'}. To establish {\bf H4'}, let us first remark that
(using {\bf H2}) we have
$$
\lim_n \varphi_n(\Sigma_n^*t)\one_{|\Sigma_n^*t|\le k}=\varphi(t)
$$
for all $t$. Then we take $a>0$ such that for given $\eta>0$ we
have $$\int_{|t|\ge a}|\varphi(t)|\,m(dt)\le \eta.$$ Take now $\ep>0$
and observe that by uniform integrability 
$$
\lim_n \int_{a\le|t|;|\Sigma^*_nt|\le
  \ep}|\varphi_n(\Sigma_n^*t)|\,m(dt)=\int_{|s|\ge
  a}|\varphi(s)|\,m(ds)\le\eta.
$$ 
\par
Now we proceed to the converse and we suppose that {\bf H1,H2,H3',H4'}
hold. We first show that given $\eta>0$, the sequence
$\varphi_n(\Sigma_n^*t)\one_{|\Sigma_n^*t|\le k}$ has up to $\eta$ all
its mass on a ball of radius $a$. Given $\eta>0$ we can find $a,\ep>0$
such that 
$$
\lim_n \int_{a\le|t|;|\Sigma^*_nt|\le
  \ep}|\varphi_n(\Sigma_n^*t)|\,m(dt)\le\eta.
$$ 
\par
Then according to {\bf H3'} and {\bf H4'}, we can find $n_0$ such that
for all $n\ge n_0$ we have
\begin{gather*}
\int_{a\le|t|;|\Sigma^*_nt|\le \ep}|\varphi_n(\Sigma_n^*t)|\,m(dt)
\le 2\eta,\\
\int_{\ep\le|\Sigma_n^*t|\le k}|\varphi(\Sigma_n^*t)|\,m(dt)\le \eta.
\end{gather*}
\par
Increasing $a$ allows us to suppose that the same inequalities hold
for all $n\ge 1$. So we get that 
$$
\int_{|\Sigma_n^*t|\le k; |t|\ge a}|\varphi(\Sigma_n^*t)|\,m(dt)\le
3\eta.
$$ 
\par
Since the sequence is uniformly bounded we have proved uniform
integrability.
\end{proof}


\begin{proof}[Proof of Theorem~\ref{th-classical}] We have here
  $\varphi_n=\psi^n$ where $\psi$ is the characteristic function of a
  random variable in the domain of attraction of a stable law.
  Property {\bf H3'} follows since the sequence $\varphi_n$ tends to
  zero exponentially fast, uniformly on compact sets of
  $\Rr^d\setminus\{0\}$.  Moreover, Property {\bf H4'} is known as
  Gnedenko's condition (see Gnedenko and Kolmogorov~\cite{GK} or the
  discussion of $I_2$ (resp. $I_3$) in Ibragimov and
  Linnik~\cite[p. 123]{IL}) (resp.~\cite[p. 127]{IL}). Thus the
  hypotheses {\bf H1}, {\bf H2}, {\bf H3'}, {\bf H4'} are fulfilled in
  this setting.
\end{proof}

\begin{rem}\rm 
(1)  The proof of Property {\bf H4'} is based on the regular variation of
  $\psi$ around $0$.  The fact that regular variation is needed
  suggest that it is difficult to get a more abstract version of this
  property.
\par
(2) Taking the most classical case where $p=2$ and $(X_n)$ independent
and identically distributed, it is easy to check that the stronger
condition {\bf H2'} (i.e.,~(\ref{eq-original}) is \emph{not}
valid, except if the $X_n$ are themselves gaussian random
variables. Thus the setting in this paper is a genuine generalization
of the original mod-gaussian convergence discussed in~\cite{JKN}.
\end{rem}

\subsection{The winding number of complex Brownian motion}
\label{sec-winding}

We take a complex Brownian Motion $W$, starting at $1$.  Of course we
can also see $W$ as a two-dimensional real BM.  The process $W$ will
never attain the value $0$ and hence, by continuous extension or
lifting, we can define the argument $\theta$. We get
$W_u=R_u\exp(i\theta_u)$ where $\theta_0=0$ and $R_u=|W_u|$.  The
process $\theta$ is called the \emph{winding number}, see~\cite{MY}.
Spitzer in \cite{Sp} computed the law of $\theta_u$ and gave its
Fourier transform, and a more precise convergence result was given in
\cite{BPY}.  
\par
The characteristic function is given by
\begin{align*}
  \Er\[\exp(it\theta_u)\]= \(\frac{\pi}{2}\)^{1/2}
  \(\frac{1}{4u}\)^{1/2}\exp\(-\frac{1}{4u}\)
\(I_{(|t|-1)/2}\(\frac{1}{4u}\)+I_{(|t|+1)/2}\(\frac{1}{4u}\)\)
\end{align*}
where $I_{\nu}(z)$ denotes the $I$-Bessel function, which can be
defined by its Taylor expansion
$$
I_{\nu}(z)=\sum_{m\geq 0}{\frac{1}{m!\Gamma(\nu+m+1)}
\Bigl(\frac{z}{2}\Bigr)^{\nu+2m}}.
$$
\par
Using elementary properties of Bessel functions, Spitzer deduced that
$\frac{2\theta_u}{\log u}$ converges to a Cauchy law with
characteristic function $\varphi(t)=\exp(-|t|)$ and density
$\frac{1}{\pi}\frac{1}{1+t^2}$.

\begin{theo}[Mod-Cauchy convergence of the winding number]
  For any sequence $(u_n)$ of positive real numbers tending to
  infinity, the sequence $X_n=\theta_{u_n}$ satisfies mod-$\varphi$
  convergence with $d=1$, $\varphi(t)=\exp(-|t|)$,
  $A_n(t)=A_n^*(t)=(\log u_n)t/2$.
\par
In particular, for any real numbers $a<b$, we have
$$
\lim_{u\rightarrow \infty}\frac{\log u}{2}\,
\Pr\[a<\theta_u<b\]=\frac{1}{\pi}(b-a).
$$
\end{theo}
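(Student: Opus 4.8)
The plan is to verify the three defining properties {\bf H1}, {\bf H2}, {\bf H3} of mod-$\varphi$ convergence for the sequence $X_n=\theta_{u_n}$ with $\varphi(t)=e^{-|t|}$, $A_n(t)=A_n^*(t)=(\log u_n)t/2$ and hence $\Sigma_n^* t=\Sigma_n t=2t/\log u_n$, and then read off the local limit statement from Theorem~\ref{th-local-limit}. Property {\bf H1} is immediate: $e^{-|t|}$ is integrable, the associated law $\mu$ is the standard Cauchy law with density $\tfrac1\pi\tfrac1{1+x^2}$, so in particular $\frac{d\mu}{dm}(0)=\tfrac1\pi$. Property {\bf H2} is essentially Spitzer's theorem: the function $\varphi_n(\Sigma_n^* t)=\Er[\exp(i\,\tfrac{2t}{\log u_n}\theta_{u_n})]$ is exactly the characteristic function of $2\theta_{u_n}/\log u_n$, which Spitzer showed converges pointwise to $e^{-|t|}$; since the limit is continuous at $0$, L\'evy's theorem (recalled in the preliminaries) upgrades this to the locally uniform (equivalently, continuous-at-$0$) convergence required, while $\Sigma_n\to 0$ because $u_n\to\infty$.

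The real work is {\bf H3}, which I would establish in the dominated form~(\ref{eq-dominated}): for each $k>0$ I want an integrable $h$ on $\Rr$ with $|\varphi_n(\Sigma_n^* t)|\le h(t)$ whenever $|\Sigma_n^* t|\le k$. Substituting the argument $s=\tfrac{2t}{\log u_n}$ into Spitzer's formula, $\varphi_n(\Sigma_n^* t)$ equals the prefactor $(\pi/2)^{1/2}(4u_n)^{-1/2}e^{-1/(4u_n)}$ times a sum of two Bessel values $I_{(|s|\mp1)/2}(1/(4u_n))$, with orders $\nu=(|s|\mp1)/2\ge-\tfrac12$ and small argument $z=1/(4u_n)$. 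From the Taylor expansion of $I_\nu$ one has, for $z\le z_0$ and $\nu\ge-\tfrac12$, a bound $I_\nu(z)\le C(z_0)\,(z/2)^\nu/\Gamma(\nu+1)$ with $C(z_0)$ independent of $\nu$ (because $(\nu+1)_m\ge\tfrac12(m-1)!$ for $m\ge1$, so the series $\sum_m (z^2/4)^m/(m!\,(\nu+1)_m)$ is dominated by a convergent, $\nu$-free series). The decisive point is then a bookkeeping of the powers of $u_n$: the product $(4u_n)^{-1/2}(z/2)^{(|s|-1)/2}=(4u_n)^{-1/2}(8u_n)^{(1-|s|)/2}$ collapses, because $\tfrac{|s|}2\log u_n=|t|$, to $\sqrt2\,e^{-|t|}\,8^{-|s|/2}\le\sqrt2\,e^{-|t|}$; meanwhile $\Gamma((|s|\pm1)/2)$ stays bounded below by a positive constant $c_k$ on the compact range $|s|\le k$, and the $I_{(|s|+1)/2}$ term carries an extra harmless factor $1/(8u_n)$. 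Hence $|\varphi_n(\Sigma_n^* t)|\le C_k\,e^{-|t|}$ on $\{|\Sigma_n^* t|\le k\}$ for all $n$ with $u_n\ge u_0:=1/(4z_0)$; since $u_n\to\infty$, only finitely many $n$ violate $u_n\ge u_0$, and for each such fixed $n$ the continuous function $\varphi_n(\Sigma_n^* t)$ is bounded, hence dominated by a constant multiple of $e^{-|t|}$, on the bounded set $\{|\Sigma_n^* t|\le k\}$. Taking $h$ to be the largest of these finitely many multiples of $e^{-|t|}$ gives the integrable domination, so {\bf H3} holds.

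The main obstacle is thus precisely this uniform control of the Bessel functions $I_\nu(1/(4u_n))$ over the range of orders $\nu=(|\Sigma_n^* t|\mp1)/2$, $|\Sigma_n^* t|\le k$; the pleasant feature is that the Cauchy weight $e^{-|t|}$ is produced automatically by the telescoping of the powers of $u_n$, which is exactly the reason the limiting law is Cauchy. Once {\bf H1}, {\bf H2}, {\bf H3} are in hand, mod-$\varphi$ convergence holds, and applying Theorem~\ref{th-local-limit}, in the Jordan-measurable form~(\ref{eq-limit-jordan}) with $B=(a,b)$, $|\det(A_n)|=(\log u_n)/2$, $m(B)=b-a$ and $\frac{d\mu}{dm}(0)=\tfrac1\pi$, yields $\frac{\log u_n}{2}\Pr[a<\theta_{u_n}<b]\to\tfrac1\pi(b-a)$; since this holds along every sequence $u_n\to\infty$, it is the asserted limit as $u\to\infty$.
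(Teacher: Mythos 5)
Your proof is correct and follows essentially the same route as the paper: verify \textbf{H1} trivially, \textbf{H2} via Spitzer's theorem, and \textbf{H3} by dominating Spitzer's Bessel-function formula on the region $|\Sigma_n^*t|\le k$, exploiting the identity $\tfrac{|s|}{2}\log u_n=|t|$ to convert the power of $u_n$ into exponential decay in $t$. The only difference is cosmetic: the paper uses the cruder uniform bound $|I_\nu(z)|\le C_k$ and the inequality $u_n^{-1/2}\le e^{-|t|/k}$ to get the dominating function $B_k e^{-|t|/k}$, whereas your sharper tracking of the factor $(z/2)^\nu/\Gamma(\nu+1)$ produces $C_k e^{-|t|}$ directly; both are integrable and suffice.
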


Although this is a very natural statement, we have not found this
local limit theorem in the literature.

\begin{proof} The conditions {\bf H1} and {\bf H2} of mod-$\varphi$
  convergence are clear, the second by Spitzer's Theorem. To check the
  uniform integrability condition {\bf H3}, we take $k\geq 1$ and we
  proceed to bound
$$
|\varphi_n(\Sigma_n^*t)|\one_{|\Sigma_n^*t|\leq k}
$$
for $t\geq 0$. But if $|\Sigma_n^*t|\leq k$, we have
$$
\frac{1}{2}\leq \frac{|\Sigma_n^*t|\pm 1}{2}\leq \frac{k+1}{2}
$$
and $0\leq \frac{1}{4u_n}\leq 1$ for $n$ large enough. The Taylor
series expansion shows immediately that there exists $C_k\geq 0$ such
that
$$
|I_{\nu}(z)|\leq C_k
$$
uniformly for $\nu$ real with $-1/2\leq \nu\leq \frac{k+1}{2}$ and
$z\in\Cr$ with $|z|\leq 1$, so that for $|\Sigma_n^*t|=2|t|/(\log
u_n)\leq k$, we have
$$
|\varphi_n(\Sigma_n^*t)|\leq B_ku_n^{-1/2}\leq
B_k\exp\Bigl(-\frac{|t|}{k}\Bigr)
$$
where $B_k=C_k(\pi/2)^{1/2}$. This gives the desired uniform
integrability, in the form~(\ref{eq-dominated}).
\end{proof}

\subsection{``Relaxed'' Poisson variables}

We present here a special case of a phenomenon which is related to
Poisson approximation and therefore probably quite general: if $P_n$,
$n\geq 1$, denotes a Poisson-distributed random variable with
parameter $\lambda_n$ going to infinity, the sequence
$$
X_n=\frac{P_n-\lambda_n}{\lambda_n^{1/3}}
$$
satisfies mod$-\varphi$ convergence with $d=1$,
$\varphi(t)=e^{-t^2/2}$ (i.e., for a standard gaussian) and
$A_nt=\lambda_n^{1/6}t$. Indeed, {\bf H2} holds because
$$
\varphi_n(t)=e^{-i\lambda_n^{2/3}t}\exp(\lambda_n
(e^{it/\lambda_n^{1/3}}-1))
=\exp(-\lambda_n^{1/3}t^2/2)\exp(-it^3/6)(1+o(1))
$$
as $n$ tends to infinity. Moreover, the next term in the expansion of
the exponential $e^{it/\lambda_n^{1/3}}$ shows that 
$$
\Bigl|\varphi_n\(\frac{t}{n^{1/6}}\)\Bigr|=e^{-t^2/2}
\(1+O\(\frac{|t|^4}{\lambda_n}\)\)
$$
and the uniform integrability condition {\bf H3} therefore holds even
for the range $|t|\leq \lambda_n^{1/4}$. (Except for {\bf H3}, this
example was considered in~\cite[Prop. 2.4]{KN2}.)
\par
As a consequence, we get the local limit
$$
\lim_{n\rightarrow +\infty}
\lambda_n^{1/6}\Pr[a\lambda_n^{1/3}+\lambda_n<
P_n<b\lambda_n^{1/3}+\lambda_n]=\frac{b-a}{\sqrt{2\pi}}
$$
for any fixed $a<b$.

\begin{rem}
  Using the formula~\cite[(4.9)]{KN2}, we see that the same
  mod-$\varphi$ convergence property holds when $P_n$ is replaced with
  $\omega_n$ defined as the number of cycles in the decomposition in
  cycles of a uniformly chosen random permutation in the symmetric
  group on $n$ letter, with $\lambda_n=\log n$. These are well-known
  (see, e.g.,~\cite{abt, KN2}) to be well-approximated by Poisson
  variables with these parameters. 
\end{rem}

\subsection{Dedekind Sums}

In this section we give an application to Dedekind sums.  Our limit
theorems are based on the estimates in Vardi's paper \cite{Va}. Let us
recall the definition of Dedekind sums. We recall the standard
notation
\begin{align*}
\lfloor x\rfloor&=\sup\{n\in\Zr\mid n\le x\},\\
\ddl x\ddr&=x-\lfloor x\rfloor -1/2,\text{ if }x\notin \Zr\\
&=0\text{ if }x\in\Zr.
\end{align*}
\par
For natural numbers $0<d<c$ with $\gcd(d,c)=1$, the Dedekind sum is
defined as
$$
s(d,c)=\sum_{0<k<c}\dl\frac{kd}{c}\dr\dl\frac{d}{c}\dr.
$$
For every $N\in \Nr$ we define the finite probability space:
\begin{align*}
  \Omega_N&=\{(d,c)\mid 0<d<c<N;\gcd(d,c)=1\},\\
  \Pr_N[A]&=\frac{\#A}{\#\Omega_N} \text{
    the normalised counting measure},\\
  X_N(d,c)&=s(d,c).
\end{align*}
\par
The distribution of $X_N$ is symmetric as easily seen by using the
measure preserving transformation $(d,c)\rightarrow (c-d,c)$. It is
well known that $\#\Omega_N/\(\frac{3N^2}{\pi^2}\)\rightarrow 1$, see
e.g section 3.4 of this paper. Vardi~\cite[Prop. 2]{Va} proved an
asymptotic formula which implies the following:

\begin{prop} For $0\le |t|\le 1/4$ we have that:
$$
|\varphi_N(2\pi t)|\le CN^{-|t|}+o(N^{-1/3}),
$$
where $C$ is an absolute constant and where the last term is uniform
in $t$.
\end{prop}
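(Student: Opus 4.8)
The plan is to read the estimate off Vardi's asymptotic formula after putting it into the right normalisation. Spelling out the counting measure,
$$
\varphi_N(2\pi t)=\frac{1}{\#\Omega_N}\sum_{(d,c)\in\Omega_N}e^{2\pi i t\,s(d,c)},
$$
so it is enough to bound the numerator $S_N(t)=\sum_{(d,c)\in\Omega_N}e^{2\pi i t\,s(d,c)}$. Since the law of $X_N$ is symmetric (via the measure-preserving involution $(d,c)\mapsto(c-d,c)$, which sends $s(d,c)$ to $-s(d,c)$), the function $\varphi_N$ is real and even, so I may assume $0\le t\le 1/4$ and state the conclusion with $N^{-|t|}$.

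I would then invoke Vardi~\cite[Prop. 2]{Va}. After the bookkeeping needed to bring his statement into this normalisation — matching his exponential parameter with $2\pi t$ and, if he argues one modulus $c$ at a time, summing over $c<N$ by partial summation — his formula gives, uniformly for $0\le t\le 1/4$,
$$
S_N(t)=M(t)\,N^{2-t}+E_N,\qquad E_N=o\!\left(N^{5/3}\right)\ \text{uniformly in }t,
$$
where $M(t)$ is a bounded coefficient (for the Proposition only the upper bound $|M(t)|\le M$ is needed, and at $t=0$ this is consistent with $S_N(0)=\#\Omega_N\sim 3N^2/\pi^2$). Dividing by $\#\Omega_N\sim 3N^2/\pi^2$ — the asymptotic recalled, and reproved, in Section 3.4 — turns the main term into $O(N^{-t})$ and the error into $o(N^{-1/3})$ (note $5/3=2-1/3$), still uniformly in $t$. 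Together with the reduction to $t\ge0$, this is precisely $|\varphi_N(2\pi t)|\le CN^{-|t|}+o(N^{-1/3})$ for $|t|\le 1/4$.

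The whole content sits in the middle step, which is where I expect the work to be. One must extract Vardi's formula with the correct power of $N$ in the error term and, above all, with that error term \emph{uniform} over the entire range $|t|\le 1/4$, since the statement requires the $o(N^{-1/3})$ term to be independent of $t$; and if Vardi proceeds modulus by modulus, one must verify that summing his per-$c$ estimate over $c<N$ does not degrade the exponent $2-t$. All the genuine arithmetic — reciprocity for Dedekind sums, the continued-fraction or Dirichlet-series analysis, and the Kloosterman-sum estimates behind Vardi's error term — is already packaged inside his Proposition 2, so beyond this translation I anticipate no further obstacle.
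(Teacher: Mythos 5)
Your proposal matches the paper's treatment exactly: the paper states this Proposition as a direct consequence of Vardi's Proposition 2 without supplying any further argument, and your reduction via symmetry together with the normalization $\#\Omega_N\sim 3N^2/\pi^2$ (turning $N^{2-t}$ into $N^{-t}$ and $o(N^{5/3})$ into $o(N^{-1/3})$) is precisely the implicit bookkeeping. Your caution about needing the error uniform over $|t|\le 1/4$ and about the per-modulus summation is well placed, but the paper itself also delegates these to Vardi and offers no independent verification.
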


\begin{rem}\rm The result of~\cite{Va} actually gives the same
  result for larger values of $t$, but the error term is only smaller
  than the main term when $|t|<2/3$.
\end{rem}

As a consequence of the same proposition in \cite{Va}, we get that for
$t\in\Rr$:
$$
\varphi_N\(\frac{2\pi t}{\log N}\)\rightarrow \varphi(t)=\exp(-|t|),
$$
the characteristic function of a standard Cauchy random variable with
density $\frac{1}{\pi(1+x^2)}$.
\par
The bound given by Vardi does not allow to show a mod$-\varphi$ (in
this case ``mod-Cauchy'') convergence, but it suffices to obtain the
following weaker statement:

\begin{prop} 
  For any sequence $(\tau_N)$ such that $\tau_N\rightarrow +\infty$
  and $\frac{\log N}{\tau_N}\rightarrow+\infty$, the sequence
  $\frac{X_N}{\tau_N}$ satisfies mod-$\varphi$ convergence with
  $A_Nt=\frac{\log N}{2\pi \tau_N}t$.
\par
Hence, for every bounded Jordan-measurable set $B\subset \Rr$, we have
$$
\frac{\log N}{2\pi \tau_N}\Pr_N\[\frac{X_N}{\tau_N}\in
B\]\rightarrow \frac{1}{\pi}m(B).
$$
\end{prop}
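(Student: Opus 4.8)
The plan is to verify the three hypotheses {\bf H1}, {\bf H2}, {\bf H3} of mod-$\varphi$ convergence for the sequence $Y_N=X_N/\tau_N$, with $d=1$, $\varphi(t)=e^{-|t|}$ and $A_Nt=\frac{\log N}{2\pi\tau_N}t$, and then quote Theorem~\ref{th-local-limit}. Observe first that $\Sigma_N=A_N^{-1}$ is the scalar $\frac{2\pi\tau_N}{\log N}$, which tends to $0$ precisely because $\frac{\log N}{\tau_N}\to+\infty$, that $|\det(A_N)|=\frac{\log N}{2\pi\tau_N}$, and that the Cauchy density $\frac{1}{\pi(1+x^2)}$ attached to $\varphi$ equals $\frac1\pi$ at the origin; so the conclusion of Theorem~\ref{th-local-limit} will read exactly as the displayed limit. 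Condition {\bf H1} is immediate, since $e^{-|t|}$ is the characteristic function of the standard Cauchy law and that density is integrable.

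For {\bf H2}, I would note that the characteristic function of $Y_N$ is $\psi_N(t)=\varphi_N(t/\tau_N)$, so that $\psi_N(\Sigma_N^*t)=\varphi_N\(\frac{2\pi t}{\log N}\)$, and this converges pointwise to $e^{-|t|}$ by the consequence of Vardi's estimate recalled just above. Since the limit is the (continuous) characteristic function of $\mu$, L\'evy's theorem (as quoted in the preliminaries) gives that $\Sigma_N(Y_N)$ converges in law to $\mu$ and that the convergence of characteristic functions is uniform on compact sets; this is {\bf H2}.

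The heart of the argument is {\bf H3}, which I would establish in the dominated form~(\ref{eq-dominated}) up to a vanishing correction. Fix $k\ge 1$; on the region $|\Sigma_N^*t|\le k$, i.e.\ $|t|\le\frac{k\log N}{2\pi\tau_N}$, put $s=t/\log N$, so $|s|\le\frac{k}{2\pi\tau_N}$, which lies in Vardi's range $[-1/4,1/4]$ once $N$ is large, because $\tau_N\to+\infty$. For such $N$ Vardi's bound gives $|\psi_N(\Sigma_N^*t)|=|\varphi_N(2\pi s)|\le CN^{-|s|}+r_N$ with $r_N=o(N^{-1/3})$ uniform in $t$, and the key algebraic identity $N^{-|s|}=N^{-|t|/\log N}=e^{-|t|}$ then yields, on the cut-off region,
$$
|\psi_N(\Sigma_N^*t)|\,\one_{|\Sigma_N^*t|\le k}\le Ce^{-|t|}+r_N\,\one_{|t|\le k\log N/(2\pi\tau_N)}.
$$
The first term is a fixed element of $L^1(\Rr)$; the second has $L^1$-norm $r_N\cdot\frac{k\log N}{\pi\tau_N}$, which (using $\tau_N\ge 1$ eventually) is at most $\frac{k}{\pi}\,r_N\log N=o(N^{-1/3}\log N)=o(1)$. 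A sequence dominated by a fixed $L^1(\Rr)$ function plus a uniformly bounded sequence whose $L^1$-norms tend to $0$ is uniformly integrable, and the finitely many small $N$ not covered above cause no trouble since there the indicator $\one_{|\Sigma_N^*t|\le k}$ already makes the integrand compactly supported. This proves~(\ref{eq-formal-h3}), hence {\bf H3}.

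The step I expect to be the main obstacle is exactly this control of Vardi's error term: as the authors point out, the $o(N^{-1/3})$ error is too crude to give mod-Cauchy convergence for $X_N$ itself, because there the cut-off on $s$ would be $|s|\le\frac{k}{2\pi}$, which cannot be kept inside Vardi's range $[-1/4,1/4]$ while $k\to\infty$. Dividing by $\tau_N\to+\infty$ contracts the relevant range of $s$ to $O(1/\tau_N)$, bringing it inside Vardi's range for every fixed $k$, and it is precisely the two hypotheses $\tau_N\to+\infty$ and $\frac{\log N}{\tau_N}\to+\infty$ (the latter both forcing $\Sigma_N\to 0$ and, together with $N^{-1/3}\log N\to 0$, making the error negligible over the widening interval $|t|\le\frac{k\log N}{2\pi\tau_N}$) that make everything go through. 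Once {\bf H1}--{\bf H3} are in hand, Theorem~\ref{th-local-limit} applied to functions approximating $\one_B$ delivers $\frac{\log N}{2\pi\tau_N}\Pr_N[X_N/\tau_N\in B]\to\frac1\pi\,m(B)$, as claimed.
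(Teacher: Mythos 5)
Your proof is correct and follows essentially the same route as the paper: verify \textbf{H1}, \textbf{H2} via Vardi's pointwise limit, and establish \textbf{H3} by substituting $s=t/\log N$ into Vardi's bound $|\varphi_N(2\pi s)|\le CN^{-|s|}+o(N^{-1/3})$, observing that $N^{-|t|/\log N}=e^{-|t|}$ and that the error contributes $o(N^{-1/3}\log N)=o(1)$ to the $L^1$-norm on the cut-off region. You are in fact slightly more careful than the paper about keeping $s$ within Vardi's stated range $[-1/4,1/4]$ (the paper's proof writes $1/2$), and you spell out the uniform-integrability conclusion from a dominated-plus-$L^1$-vanishing decomposition rather than leaving it implicit, but these are expository refinements, not a different argument.
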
 

\begin{proof} 
  We only have to show that for each $k$, the
  sequence 
$$\varphi_N(2\pi t/\log N)\one_{\frac{2\pi |t|\tau_N}{\log N}\le k}
$$ 
is uniformly integrable.  This is seen as follows: if $ \frac{2\pi
  |t|\tau_N}{\log N}\le k$ and if $N$ is big enough, then $|t|/\log
N\le 1/2$.  Consequently for $N$ large enough (depending on $k$), we
get 
$$
\left|\varphi_N(2\pi t/\log N)\one_{\frac{2\pi |t|\tau_N}{\log N}\le
    k}\right|\le C\exp(-|t|) + \psi_N(t),
$$
where $\Vert \psi_N\Vert_1\le CN^{-1/3}\log N$. This implies uniform
integrability of the sequence.
\end{proof}

\begin{rem} The more precise local limit theorem
$$
\frac{\log N}{2\pi}\Pr_N\[X_N\in B\]\rightarrow
\frac{1}{\pi}m(B)
$$
is in fact valid, as proved by Bruggeman~\cite{Br}.  Our methods do
not seem to lead to this result only using mod-$\varphi$ convergence.
\end{rem}

\subsection{The $\zeta$-distribution}

The $\zeta-$distributions are purely atomic, infinitely divisible,
probability distributions, denoted $\mu^\sigma$, which were considered
by Khintchine and studied in more detail in~\cite{LH}.
\par 
The measure $\mu^{\sigma}$ is defined for $\sigma>1$ as the measure
supported on the points $\{-\log(n)\mid n\in\Nr;n\ge 1\}$, such that
$$
\mu^\sigma(-\log n)=\frac{n^{-\sigma}}{\zeta(\sigma)}
$$
for $n\geq 1$. Its characteristic function is then given by
$$
\varphi^\sigma(t)=\sum_{n\ge
  1}\frac{n^{-\sigma}}{\zeta(\sigma)}e^{it(-\log
  n)}=\frac{\zeta(\sigma+it)}{\zeta(\sigma)}.
$$
\par
The limit of interest here is when $\sigma\downarrow 1$. Since the
zeta function can be written
$$
\zeta(s)=\frac{\zeta^*(s)}{s-1},
$$
where $\zeta^*(s)$ defines an entire function of $s\in\Cr$ (i.e., the
zeta function has only a simple pole with residue $1$ at $s=1$), the
behavior of $\varphi^{\sigma}(t)$ is easy to understand, namely
$$
\varphi^\sigma(t)=\frac{\zeta(\sigma+it)}{\zeta(\sigma)}
=\frac{1}{1+\frac{it}{\sigma-1}}\frac{\zeta^*(\sigma+it)}
{\zeta^*(\sigma)}.
$$
\par
Thus, if $X^{\sigma}$ are random variables with law $\mu^{\sigma}$, we
see that $(\sigma-1)X^{\sigma}$ converges in law to a ``negative''
exponential distribution supported on $]-\infty,0]$ with density
$e^x$. The characteristic is not integrable, hence we can not apply
our results. To work around this, we consider independent copies
$X_1^\sigma, X^\sigma_2$ of random variables having the law
$\mu^\sigma$, and define
$$
Y^\sigma=X_1^\sigma-X_2^\sigma.
$$ 
\par
These random variables have characteristic function given by
$$
|\varphi^\sigma(t)|^2=
\frac{|\zeta(\sigma+it)|^2}{\zeta(\sigma)^2}
=\frac{1}{1+\frac{t^2}{(\sigma-1)^2}}\frac{|\zeta^*(\sigma+it)|^2}
{\zeta^*(\sigma)^2}
$$
and hence $(\sigma-1)Y^{\sigma}$ converges in law, as
$\sigma\downarrow 1$, to a double exponential (or Laplace)
distribution, with characteristic function
$\varphi(t)=\frac{1}{1+t^2}$ and density $\frac{1}{2}\exp(-|x|)$. Thus
conditions {\bf H1} and {\bf H2} are now satisfied (in the version of
a continuous limit $\sigma\downarrow 1$). Moreover, if $1<\sigma\leq
2$ and $t$ ranges over the set where $|(\sigma-1)t|\leq k$, for $k>0$
fixed, the values of
$$
\frac{|\zeta^*(\sigma+it(\sigma-1))|^2}
{\zeta^*(\sigma)^2}
$$
vary in a bounded set. This shows that {\bf H3} also holds, and we can
apply Theorem~\ref{th-local-limit}; it follows that 
$$
\lim_{\sigma\downarrow 1}\frac{1}{\sigma-1}\Pr[a<Y^\sigma<b]=
\frac{1}{2}(b-a).
$$
for all $-\infty<a<b<+\infty$. We can make this limit explicit:
indeed, $Y^\sigma$ takes values of the form
$\log(k)-\log(n)=\log(k/n)$ where $k,n\ge 1$.  The probability that
$Y^{\sigma}=\log(k/n)$ for $(k,n)=1$ (i.e. $k$ and $n$ are coprime) is
easily seen to be
$$
\Pr[Y^\sigma=\log(k/n)]= \sum_{m\ge
  1}\frac{(km)^{-\sigma}(nm)^{-\sigma}}{\zeta(\sigma)^2}=
k^{-\sigma}n^{-\sigma}\frac{\zeta(2\sigma)}{\zeta(\sigma)^2}.
$$
\par
Hence the limit becomes, for $0<\alpha<\beta$, the formula
$$
\frac{\zeta(2\sigma)}{(\sigma-1)\zeta(\sigma)^2}
\sum_{\stacksum{(k,n)=1}{\alpha<k/n<\beta}}
k^{-\sigma}n^{-\sigma}
\rightarrow
\frac{1}{2}\log\(\frac{\beta}{\alpha}\)
$$
as $\sigma\downarrow 1$, which is equivalent (since $\zeta(2)=\pi^2/6$
and $\zeta(\sigma)\sim (\sigma-1)$) to
$$
(\sigma-1)\sum_{\stacksum{(k,n)=1}{\alpha<k/n<\beta}}
k^{-\sigma}n^{-\sigma}\rightarrow
\frac{3}{\pi^2}\log\(\frac{\beta}{\alpha}\).
$$
\par
We could not find any reference to this statement, so it might be
new (although it could certainly be proved with more traditional
methods.)

\subsection{Squarefree integers}

This section is motivated by a recent paper of Cellarosi and
Sinai~\cite{CS}, who discuss a natural probabilistic model of random
squarefree integers. As we will see, some of its properties fall into
the framework of mod-$\varphi$ convergence, with a very non-standard
characteristic function $\varphi$.
\par
The set-up, in a slightly different notation than the one used
in~\cite{CS}, is the following.  We fix a probability space $\Omega$
that is big enough to carry independent copies of random variables
$\eta_p$, with index $p$ running over the prime numbers, with the
following distribution laws:
$$
\Pr[\eta_p=1]=
\Pr[\eta_p=-1]=
\frac{p}{(p+1)^2},\quad\quad
\Pr[\eta_p=0]=\frac{p^2+1}{(p+1)^2}.
$$
\par
We consider the random variables
$$
X_n=\sum_{p\leq p_n}{\eta_p\log p},\quad\quad Q_n=\exp(X_n)
$$
for $n\geq 1$, where $p_n$ is the $n$-th prime number.  
\par
The link with~\cite{CS} is the following: in the notation
of~\cite[Th. 1.1]{CS}, the distribution of $X_n$ is the same as that
of the difference
$$
(\zeta_n-\zeta'_n)\log p_n
$$
of two independent copies $\zeta_n$ and $\zeta_n'$ of the random
variables variables
$$
\zeta_{n}=\sum_{p\leq p_n}{\nu_p\log p}
$$ 
of~\cite[Th. 1.1]{CS}, where the $\nu_p$ are independent Bernoulli
variables with
$$
\Pr[\nu_p=0]=\frac{1}{p+1},\quad\quad
\Pr[\nu_p=1]=\frac{p}{p+1}.
$$
\par
These random variables $\nu_p$ are very natural in studying squarefree
numbers. Indeed, a simple computation shows that $\nu_p$ is the limit
in law, as $x\rightarrow +\infty$, of the Bernoulli random variables
$\nu_{p,x}$ defined by
$$
\Pr[\nu_{p,x}=1]=\frac{|\{n\leq x\,\mid\, x\text{ squarefree and
    divisible by } p\}|} {|\{n\leq x\,\mid\, x\text{ squarefree}\}|}.
$$
for fixed $p$.
\par
By definition, the support of the values of $\exp(\zeta_n)$ is the set
of squarefree integers only divisible by primes $p\leq p_n$, and for
$Q_n$, it is the set of rational numbers $x=a/b$ where $a$, $b\geq 1$
are coprime integers, both squarefree, and both divisible only by
primes $p\leq p_n$. It is natural to see them as giving probabilistic
models of these numbers. We obtain mod-$\varphi$ convergence for
$X_n$:

\begin{theo}\label{th-squarefree}
Let
$$
\varphi(t)=\exp\( -4\int_0^1\sin^2\Bigl(\frac{t v}{2}\Bigr)
\frac{dv}{v}\)
$$
for $t\in\Rr$. Then $\varphi$ is an integrable characteristic function
of a probability distribution on $\Rr$, and the sequence $(X_n)$
satisfies mod-$\varphi$ convergence with $d=1$ and
$A_n(t)=A_n^*(t)=(\log p_n)t$.
\end{theo}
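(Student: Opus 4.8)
The plan is to verify conditions {\bf H1}, {\bf H2}, {\bf H3} of Definition~\ref{def-modphi}. Everything rests on the explicit formula for the characteristic functions: since $\cos\theta=1-2\sin^2(\theta/2)$, we have $\Er[e^{it\eta_p\log p}]=\frac{p^2+1}{(p+1)^2}+\frac{2p}{(p+1)^2}\cos(t\log p)=1-\frac{4p}{(p+1)^2}\sin^2\Bigl(\frac{t\log p}{2}\Bigr)$, and therefore, by independence,
$$
\varphi_n(t)=\prod_{p\le p_n}\Bigl(1-\frac{4p}{(p+1)^2}\sin^2\Bigl(\frac{t\log p}{2}\Bigr)\Bigr),
$$
each factor lying in $[0,1]$ because $4p\le(p+1)^2$. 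For {\bf H1} I would rewrite $\varphi(t)=\exp\bigl(-2\int_0^1(1-\cos tv)\,\frac{dv}{v}\bigr)=\exp\bigl(-\int_{\Rr}(1-\cos tx)\,\nu(dx)\bigr)$ with $\nu(dx)=|x|^{-1}\one_{0<|x|\le1}\,dx$; since $\int_{\Rr}\min(1,x^2)\,\nu(dx)=1<\infty$, $\nu$ is a L\'evy measure and $\varphi$ is the characteristic function of a symmetric infinitely divisible law by the L\'evy--Khintchine formula. For integrability, the substitution $w=tv$ gives $\int_0^1(1-\cos tv)\,\frac{dv}{v}=\int_0^{|t|}\frac{1-\cos w}{w}\,dw=\log|t|+O(1)$ as $|t|\to\infty$, so $\varphi(t)\ll|t|^{-2}$ and $\varphi\in L^1(\Rr)$ (it is continuous and bounded near $0$).

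For {\bf H2}, fix $t$ and put $\beta_n=t/\log p_n\to0$. Writing $x_p=\frac{2p}{(p+1)^2}\bigl(1-\cos(\beta_n\log p)\bigr)\in[0,\tfrac89]$ for $p\ge2$, one has $\log(1-x_p)=-x_p+O(x_p^2)$ with $\sum_p x_p^2\to0$ by dominated convergence ($x_p^2\ll p^{-2}$, and $x_p\to0$ for each fixed $p$), so $\log\varphi_n(t/\log p_n)=-\sum_{p\le p_n}x_p+o(1)$; replacing $\frac{2p}{(p+1)^2}$ by $\frac2p$ also costs only $o(1)$. Partial summation based on Mertens' estimate $\sum_{p\le y}1/p=\log\log y+M+o(1)$, with the change of variable $v=\log p/\log p_n$, then yields $\sum_{p\le p_n}\frac{1-\cos(\beta_n\log p)}{p}\to\int_0^1\frac{1-\cos tv}{v}\,dv$ (the Mertens error contributes $o(1)$ after first letting $n\to\infty$). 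Hence $\log\varphi_n(t/\log p_n)\to-2\int_0^1\frac{1-\cos tv}{v}\,dv=\log\varphi(t)$, and since $\varphi$ is continuous with $\varphi(0)=1$, L\'evy's theorem upgrades this to locally uniform convergence, which is {\bf H2}. (Alternatively, {\bf H2} can be read off from the main result of~\cite{CS} via the identification $X_n\stackrel{d}{=}(\zeta_n-\zeta_n')\log p_n$.)

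The substantive point is {\bf H3}, which I would verify in the dominated form of~(\ref{eq-dominated}). Using $1-x\le e^{-x}$ on $[0,1]$ and $\frac{2p}{(p+1)^2}\ge\frac2p-\frac{6}{p^2}$, for $|t|\le k\log p_n$ and $\beta=|t|/\log p_n\le k$ one gets $|\varphi_n(t/\log p_n)|\le\exp\bigl(-2\sum_{p\le p_n}\frac{1-\cos(\beta\log p)}{p}+C_0\bigr)$ with $C_0$ absolute. Now $\sum_{p\le p_n}\frac{1-\cos(\beta\log p)}{p}=\sum_{p\le p_n}\frac1p-\mathrm{Re}\sum_{p\le p_n}p^{i\beta-1}$, and the key estimate is that, uniformly for $\beta\in(0,k]$ and all large $n$,
$$
\mathrm{Re}\sum_{p\le p_n}p^{i\beta-1}\le\log(1/\beta)+O_k(1).
$$
To prove this I would write $\sum_{p\le p_n}p^{i\beta-1}=\sum_{p}p^{i\beta-1}-\sum_{p>p_n}p^{i\beta-1}$; the infinite prime-zeta sum equals $\log\zeta(1-i\beta)-C(1-i\beta)$ with $C(s)=\sum_p\sum_{k\ge2}(kp^{ks})^{-1}$ bounded on $\mathrm{Re}\,s=1$, and the non-vanishing of $\zeta$ on that line together with $\zeta(1-i\beta)\sim i/\beta$ near $\beta=0$ gives $\mathrm{Re}\log\zeta(1-i\beta)=\log|\zeta(1-i\beta)|\le-\log\beta+O_k(1)$; the tail is $\ll1/(\beta\log p_n)=1/|t|$ by the prime number theorem. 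Combined with Mertens' formula for $\sum_{p\le p_n}1/p$ this yields $\sum_{p\le p_n}\frac{1-\cos(\beta\log p)}{p}\ge\log(\beta\log p_n)+O_k(1)=\log|t|+O_k(1)$, hence $|\varphi_n(t/\log p_n)|\le C_k|t|^{-2}$ for $1\le|t|\le k\log p_n$; so $h(t)=C_k\min(1,|t|^{-2})$ is an integrable majorant and {\bf H3} follows (the finitely many small $n$ contribute a compactly supported $f_{n,k}$ and do not affect~(\ref{eq-formal-h3})).

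I expect the genuine obstacle to be precisely this uniform bound on $\mathrm{Re}\sum_{p\le p_n}p^{i\beta-1}$: a naive partial summation from Mertens' formula carries an error of order $\beta\log p_n=|t|$, which swamps the main term of size $\log|t|$, so there seems to be no way around invoking the zero-free line $\mathrm{Re}\,s=1$ and the pole of $\zeta$ at $s=1$ (or an input of comparable strength extracted from~\cite{CS}).
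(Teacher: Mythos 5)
Your proposal is correct, and {\bf H1}, {\bf H2} are handled essentially as in the paper ({\bf H2} via Mertens plus partial summation rather than the paper's explicit strong-PNT summation lemma, and you add a L\'evy--Khintchine identification of $\varphi$ that the paper leaves implicit, since it follows anyway from {\bf H2} and L\'evy's theorem). The genuine divergence is in {\bf H3}, and your closing diagnosis of where the difficulty lies is almost, but not quite, what the paper does. You prove the uniform bound by passing to $\sum_{p\le x}p^{i\beta-1}=\log\zeta(1-i\beta)+O(1)-(\text{tail})$, using the pole of $\zeta$ at $s=1$, its non-vanishing on $\mathrm{Re}\,s=1$, and an oscillatory tail estimate $\ll 1/(\beta\log x)=1/|t|$; this yields the clean majorant $C_k|t|^{-2}$. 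The paper instead never leaves the ``compare the prime sum with the integral'' framework: it splits off the primes $p\le y=|t|$ and bounds their contribution \emph{trivially} by $\sum_{p\le|t|}p^{-1}\ll\log\log|t|$, applies its PNT lemma only on $y<p\le x$ (where the error $O((1+k)/(\log y)^2)$ is harmless), and separately bounds the missing piece $\int_0^{\log y/\log x}v^{-1}\sin^2(tv/2)\,dv\le 1+\log(k\log|t|)$; the resulting majorant is $|t|^{-2}\exp(C\log\log 3|t|)$, weaker than yours by a power of $\log|t|$ but still integrable. So your claim that ``there seems to be no way around invoking the zero-free line'' is not quite right: the sacrifice of the primes up to $|t|$ is exactly the device that sidesteps any explicit appeal to the pole and non-vanishing of $\zeta$ (beyond the strong PNT itself, which both arguments use). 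Your route buys a sharper, optimal-order domination; the paper's buys a more elementary bookkeeping at the price of a logarithmic loss.
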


The proof is quite similar in principle to arguments in~\cite{CS},
though our presentation is more in the usual style of analytic number
theory.
\par
We start with the easiest part of this statement:

\begin{lemma}\label{lm-sqfl1}  
  We have $\varphi\in L^1(\Rr)$, and in fact
\begin{equation}\label{eq-sqf-l1}
|\varphi(t)|\leq C|t|^{-2}
\end{equation}
for $|t|\geq 1$ and some constant $C\geq 0$.
\end{lemma}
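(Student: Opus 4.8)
The plan is to analyze the exponent $-4\int_0^1\sin^2(tv/2)\,dv/v$ and show it tends to $-\infty$ like $-2\log|t|$ as $|t|\to\infty$, which immediately gives both integrability and the quantitative bound \eqref{eq-sqf-l1}. First I would substitute $w=tv$ (assuming $t>0$; the function is even in $t$, so this loses no generality) to rewrite the integral as $\int_0^t \sin^2(w/2)\,dw/w$. The point is that $\sin^2(w/2)=\tfrac12(1-\cos w)$, so this integral equals $\tfrac12\int_0^t (1-\cos w)\,dw/w$, and the standard asymptotics of the cosine integral give $\int_1^t (1-\cos w)\,dw/w = \log t + \gamma - \mathrm{Ci}(t) + O(1)$-type behavior; more simply, $\int_1^t \cos w\,dw/w$ stays bounded (integration by parts: it equals $\sin t/t - \sin 1 + \int_1^t \sin w\,dw/w^2$, all bounded), while $\int_1^t dw/w = \log t$. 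Hence $\int_0^t \sin^2(w/2)\,dw/w = \tfrac12\log t + O(1)$.

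Plugging this back in, the exponent is $-4\cdot(\tfrac12\log t + O(1)) = -2\log t + O(1)$, so $|\varphi(t)| = \exp(-2\log t + O(1)) \ll t^{-2}$ for $t\ge 1$, which is exactly \eqref{eq-sqf-l1}; combined with the trivial bound $|\varphi(t)|\le 1$ everywhere (the exponent is manifestly $\le 0$ since $\sin^2\ge 0$ and $v>0$ on $(0,1)$), this yields $\varphi\in L^1(\Rr)$. That $\varphi$ is a genuine characteristic function is not asserted in this lemma — only integrability — so it need not be addressed here; it is part of the larger Theorem~\ref{th-squarefree}.

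The only mildly delicate point is making the $O(1)$ in $\int_0^t \sin^2(w/2)\,dw/w = \tfrac12\log t + O(1)$ fully rigorous near $w=0$, where one must check the integrand $\sin^2(w/2)/w \sim w/4$ is integrable at the origin (it is, with no singularity), and uniformly in the tail, where the integration-by-parts estimate for $\int_1^t \cos w\,dw/w$ must be carried out carefully to extract an absolute constant. None of this is hard; I would simply write $\int_0^t\sin^2(w/2)\,dw/w \le \int_0^1 (w/4)\,dw + \int_1^t dw/(2w) = \tfrac18 + \tfrac12\log t$ for the upper bound on the exponent's magnitude, which already suffices for \eqref{eq-sqf-l1} with an explicit constant $C = e^{1/2}$. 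The main (trivial) obstacle is just bookkeeping the constants; there is no real conceptual difficulty.
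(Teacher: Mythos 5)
Your main argument is correct and takes essentially the same route as the paper: change of variable to $\int_0^t \sin^2(w/2)\,dw/w$, the identity $\sin^2(w/2)=\frac12(1-\cos w)$, and integration by parts to show $\int_1^t \cos w\,dw/w$ is bounded, giving $\int_0^t\sin^2(w/2)\,dw/w = \frac12\log t + O(1)$ and hence $|\varphi(t)|\ll t^{-2}$. The paper's proof is this same computation, stated tersely as ``integration by parts gives $\int_0^t \frac{\sin^2 x}{x}\,dx=\frac12\log t+b(t)$.''

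One caveat: your final ``explicit constant'' paragraph has the inequality pointing the wrong way. You bound $\int_0^t \sin^2(w/2)\,dw/w$ from \emph{above}, but since $|\varphi(t)|=\exp\bigl(-4\int_0^t\sin^2(w/2)\,dw/w\bigr)$, an upper bound on the integral gives a \emph{lower} bound on $|\varphi(t)|$ --- the opposite of what \eqref{eq-sqf-l1} requires. The estimate actually needed is a lower bound $\int_0^t\sin^2(w/2)\,dw/w\ge\frac12\log t-C_0$, which does follow from the non-negativity of the integrand on $[0,1]$ and your integration-by-parts bound on $\int_1^t\cos w\,dw/w$, but not from the chain of inequalities you wrote. (A secondary slip: $\int_1^t\sin^2(w/2)\,dw/w\le\int_1^t dw/(2w)$ would require $\sin^2(w/2)\le\frac12$, which is false.) This is a local error in a side remark rather than a gap in the main argument, but the claimed explicit constant $C=e^{1/2}$ is not justified as written.
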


\begin{rem}
  The characteristic function of the limit in law of the
  (non-symmetrised) random variables $\zeta_n$ used
  in~\cite[Th. 1.1]{CS} only decays as $t^{-1}$ when $|t|\rightarrow
  +\infty$, and hence is not integrable, which prevents us from
  applying our results directly to those variables. As we will see,
  this is quite delicate: changing the constant $4$ to a constant $<2$
  would lead to a failure of this property.
\par
Below, we will see that Theorem~\ref{th-local-limit} \emph{is not
  valid} for the variables $(\log p_n)\zeta_n$.
\end{rem}

\begin{proof} Integration by parts gives that 
$$
\int_0^t \frac{\sin^2
  x}{x}\,dx=\frac{1}{2}\log t +b(t)
$$ 
where $b(t)$ tends to a constant for $t\rightarrow \infty$.  From here
we deduce that
$$
4\int_0^1\sin^2\Bigl(\frac{t v}{2}\Bigr) \frac{dv}{v} =2\log |t| +
c(t)
$$ 
where $c(t)$ remains bounded.  As a result we get~(\ref{eq-sqf-l1}),
which proves that $\varphi\in L^1$ since the function is continuous.
(Alternatively, one can check that
\begin{equation}\label{eq-ci}
\varphi(t)=\exp(-2\gamma-2\log|t|+2\mathrm{Ci}(t))
\end{equation}
where $\gamma$ is the Euler constant and $\mathrm{Ci}(t)$ is the
cosine integral function, and use the properties of the latter.)  
\end{proof}


\begin{proof}[Proof of Theorem~\ref{th-squarefree}]
Let 
$$
Y_n=\frac{1}{\log p_n}X_n
$$
and let $\psi_n$ be the characteristic function of $Y_n$, which we
proceed to compute.
\par
With $x=x_n=p_n$, we have first
\begin{align}
\psi_n(t)
=\Er\[\exp(it Y_n)\]
&=\prod_{p\leq x}\Er\[\exp\(i \frac{\log p}{\log x}t \eta_p\)\]
\nonumber\\
&=\prod_{p\leq x}\(\frac{p^2+1}{(p+1)^2} + 
\frac{2p}{(p+1)^2}\cos\(t\frac{\log p}{\log x}\)\)
\nonumber\\
&=\prod_{p\leq x}\(1-\frac{2p}{(p+1)^2}
\(1-\cos\(t\frac{\log p}{\log x}\)\)\)
\nonumber\\
&=\prod_{p\leq x}\(1 - \frac{4p}{(p+1)^2}
\sin^2\(  \frac{t}{2}\frac{\log p}{\log x}\)\)
\nonumber\\
&=\exp\(\sum_{p\leq x}  \log\(  1 - \frac{4p}{(p+1)^2}
\sin^2\(  \frac{t}{2}\frac{\log p}{\log x}\) \)\)
\label{eq-psin}
\end{align}
for all $t\in\Rr$. Now we assume $t\not=0$ (since for $t=0$, the
values are always $1$). We first show pointwise, locally uniform,
convergence.
\par
The idea to see the limit emerge in the sum over $p$ is quite
simple. First of all, we can expand the logarithm in Taylor
series. We have
$$
\lim_{x\rightarrow +\infty}\sum_{k\geq 2}\sum_{p\leq x}p^{-k}\Bigl|
\sin^2\Bigl(\frac{t\log p}{2\log x}\Bigr)
\Bigr|
=0,
$$
for $t$ in a bounded set, by dominated convergence. This allows us to
restrict our attention to
\begin{equation}\label{eq-suffice}
-4\sum_{p\leq x}p^{-1}\sin^2\Bigl(\frac{t\log p}{2\log x}\Bigr)
\end{equation}
(we also used the fact that $4p/(p+1)^2$ is equal to $4/p$ up to terms
of order $p^{-2}$.)  Now, for $p\leq y$, where $y\leq x^{1/|t|}$ is a
further parameter (assuming, as we can, that this is $\geq 2$), we
have also
$$
\Bigl|\sum_{p\leq y}
p^{-1}\sin^2\Bigl(\frac{t\log p}{2\log x}\Bigr)
\Bigr|
\leq
\Bigl(\frac{t}{2\log x}\Bigr)^2\sum_{p\leq y}
p^{-1}(\log p)^2
\ll t^2\frac{(\log y)^2}{(\log x)^2}.
$$
\par
Thus, for if we select $y=y(x)\leq x^{1/|t|}$ tending to infinity
slowly enough that $\log y=o(\log x)$, this also converges to $0$ as
$x\rightarrow +\infty$, and what remains is
$$
-4\sum_{y(x)\leq p\leq x}p^{-1}\sin^2\Bigl(\frac{t\log p}{2\log x}\Bigr).
$$
\par
We can now perform ``back-and-forth'' summation by parts using the
Prime Number Theorem to see that this is
$$
-4\int_{y(x)}^{x} u^{-1}\sin^2\Bigl(\frac{t\log u}{2\log
  x}\Bigr)\frac{du}{\log u}+o(1)
$$
as $x\rightarrow +\infty$ (apply Lemma~\ref{lm-pnt} below with $B=2$
and with the function
$$
f(u)=\frac{1}{u}\sin^2\Bigl(\frac{t\log u}{2\log x}\Bigr)
$$
with
$$
f'(u)=-\frac{1}{u^2}\sin^2\Bigl(\frac{t\log u}{2\log x}\Bigr)
+\frac{t}{u^2(\log x)}\sin\Bigl(\frac{t\log u}{2\log x}\Bigr)
\cos\Bigl(\frac{t\log u}{2\log x}\Bigr),
$$
which satisfies
\begin{equation}\label{eq-bounds}
|f(u)|\leq u^{-1},\quad\quad
|f'(u)|\leq \Bigl(1+\frac{t}{\log x}\Bigr)u^{-2}\ ;
\end{equation}
the integral error term in Lemma~\ref{lm-pnt} is then dominated by the
tail beyond $y(x)$ of the convergent integral
$$
\int_2^{+\infty}{\frac{du}{u(\log u)^2}}, 
$$
and the result follows). Performing the change of variable
$$
v=\frac{\log u}{\log x},
$$
we get the integral
$$
-4\int_{\log(y(x))/(\log x)}^{1} \sin^2\Bigl(\frac{tv}{2}\Bigr)\frac{dv}{v},
$$
which converges to $\varphi(t)$ as $x\rightarrow +\infty$.
\par
To conclude the proof of Theorem~\ref{th-squarefree}, we will prove
the following inequality, which guarantees the uniform integrability
condition {\bf H3}: for any $k\geq 1$ and $t$, $n$ with $|t|\leq
k(\log x)=k(\log p_n)$, we have
\begin{equation}\label{eq-sqf-unif}
\psi_n(t)\ll |\varphi(t)|\exp(C\log \log 3|t|)
\end{equation}
which gives the desired result since we know from~(\ref{eq-sqf-l1})
that $\varphi$ decays like $|t|^{-2}$ at infinity.
\par
We can assume that $|t|\geq 2$. Now we start with the
expression~(\ref{eq-psin}) again and proceed to deal with the sum over
$p\leq x$ in the exponential using roughly the same steps as
before. To begin with, we may again estimate the
sum~(\ref{eq-suffice}) only, since the contribution of the others
terms is bounded uniformly in $t$ and $x$:
$$
\Bigl|\sum_{k\geq 2}\sum_{p\leq x}p^{-k}
\sin^2\Bigl(\frac{t\log p}{2\log x}\Bigr)
\Bigr|\leq \sum_{k\geq 2}\sum_{p}p^{-k},
$$
which is a convergent series. After exponentiation, these terms lead
to a fixed multiplicative factor, which is fine for our
target~(\ref{eq-sqf-unif}).
\par
We next deal with the small primes in~(\ref{eq-suffice}); since
$|t|\leq k\log x$, the sine term may not lead to any decay, but we
still can bound trivially
$$
\Bigl|\sum_{p\leq y}p^{-1}\sin^2\Bigl(\frac{t\log p}{2\log x}\Bigr)
\Bigr| \leq \sum_{p\leq y}{p^{-1}}\ll \log\log y
$$
for any $y\leq x$ (by a standard estimate). We select $y=|t|\geq 2$,
and this becomes a factor of the type
$$
\exp(C\log\log t)
$$
(after exponentiating), which is consistent with~(\ref{eq-sqf-unif}).
\par
We now apply Lemma~\ref{lm-pnt} again, writing more carefully the
resulting estimate, namely
\begin{align*}
-4\sum_{y<p\leq x}p^{-1}\sin^2\Bigl(\frac{t\log p}{2\log x}\Bigr)
&=
-4\int_{y}^{x} u^{-1}\sin^2\Bigl(\frac{t\log u}{2\log
  x}\Bigr)\frac{du}{\log u}+O\Bigl(\frac{1+k}{(\log y)^2}\Bigr)
\\
&=-4\int_{\log y/\log x}^1{v^{-1}\sin^2\Bigl(\frac{tv}{2}\Bigr)dv}+
O\Bigl(\frac{1+k}{(\log y)^2}\Bigr)
\end{align*}
(using the bound~(\ref{eq-bounds})), with an absolute implied
constant. The remainder here is again fine, since $y=|t|\geq 2$ by
assumption.
\par
Now, to conclude, we need only estimate the missing part of the target
integral (which runs from $0$ to $1$) in this expression, namely
$$
\int_0^{\log y/\log x}{v^{-1}\sin^2\Bigl(\frac{tv}{2}\Bigr)dv}.
$$
\par
We write
$$
\int_0^{\log y/\log x}{v^{-1}\sin^2\Bigl(\frac{tv}{2}\Bigr)dv}
=\int_{0}^{|t|^{-1}}{(\cdots)}+
\int_{|t|^{-1}}^{\log y/\log x}{(\cdots)}
$$
where the first terms is bounded by
$$
(t/2)^2\int_0^{|t|^{-1}}{vdv}\leq 1,
$$
and the second by
$$
\int_{|t|^{-1}}^{\log y/\log x}{v^{-1}dv}=\log\Bigl(\frac{|t|\log
  y}{\log x}\Bigr)
\leq \log(k\log y)=\log(k\log |t|).
$$
\par
Putting the inequalities together, we have proved~(\ref{eq-sqf-unif}),
and hence Theorem~\ref{th-squarefree}.
\end{proof}

Here is the standard lemma from prime number theory that we used
above, which expresses the fact that for primes sufficiently large,
the heuristic -- due to Gauss -- that primes behave like positive
numbers with the measure $du/(\log u)$ can be applied confidently in
many cases.

\begin{lemma}\label{lm-pnt}
  Let $y\geq 2$ and let $f$ be a smooth function defined on
  $[y,+\infty[$. Then for any $A\geq 1$, we have
$$
\sum_{y\leq p\leq x}{f(p)}= \int_y^x{f(u)\frac{du}{\log u}}+O\Bigl(
\frac{x|f(x)|}{(\log x)^A}+\frac{y|f(y)|}
{(\log y)^A}+ \int_y^x{|f'(u)|\frac{udu}{(\log
    u)^A}} \Bigr)
$$
where the sum is over primes and the implied constant depends only on
$A$.
\end{lemma}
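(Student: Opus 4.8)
The plan is to deduce this from the Prime Number Theorem with its classical error term, by Riemann--Stieltjes integration by parts. Write $\mathrm{Li}(u)=\int_2^u dt/\log t$ for $u\geq 2$. The input we need is that for every fixed $A\geq 1$,
$$
\pi(u)=\sum_{p\leq u}1=\mathrm{Li}(u)+R(u),\qquad |R(u)|\ll \frac{u}{(\log u)^A}
$$
uniformly for $u\geq 2$, with implied constant depending only on $A$: for $u$ in a bounded range near $2$ this is trivial since $(\log 2)^A$ is a positive constant, while for large $u$ it is the de la Vall\'ee Poussin estimate coming from the standard zero-free region for $\zeta(s)$. This is the only substantive ingredient; everything else is bookkeeping.

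First I would express the left-hand side as a Stieltjes integral and substitute the decomposition of $\pi$:
$$
\sum_{y\leq p\leq x}f(p)=\int_{y^-}^{x}f(u)\,d\pi(u)
=\int_{y}^{x}f(u)\,\frac{du}{\log u}+\int_{y^-}^{x}f(u)\,dR(u),
$$
where the first term on the right arises because $\mathrm{Li}$ is continuously differentiable with $\mathrm{Li}'(u)=1/\log u$, and $\log u\geq\log 2>0$ throughout the range so the main integral is harmless. That first term is exactly the asserted main term. For the second, since $f$ is $C^1$ and hence carries no atoms, integration by parts gives
$$
\int_{y^-}^{x}f(u)\,dR(u)=f(x)R(x)-f(y)R(y^-)-\int_{y}^{x}R(u)f'(u)\,du.
$$
By the bound on $R$, the two boundary terms are $O\bigl(x|f(x)|/(\log x)^A\bigr)$ and $O\bigl(y|f(y)|/(\log y)^A\bigr)$ (using that $R(y^-)$ and $R(y)$ differ by at most $1\ll y/(\log y)^A$), while the remaining integral is at most $\int_y^x|R(u)|\,|f'(u)|\,du\ll\int_y^x |f'(u)|\,u\,du/(\log u)^A$. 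Collecting these three contributions yields precisely the stated estimate, with implied constant depending only on $A$.

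I do not expect any genuine obstacle here. The only point needing a little care is the Stieltjes bookkeeping at the lower endpoint $y$ — in particular when $y$ is itself prime — but this only shifts $R$ by a bounded amount, which is absorbed into the error term. The single real input, which I would simply cite, is the form of the Prime Number Theorem with remainder $O\bigl(u/(\log u)^A\bigr)$ used above.
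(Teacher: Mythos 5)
Your proof is correct and is essentially the same as the paper's: both rest on the Prime Number Theorem with the de la Vall\'ee Poussin remainder (cited as giving $\pi(u)=\mathrm{Li}(u)+O(u/(\log u)^A)$ with implied constant depending only on $A$), combined with summation by parts. The only cosmetic difference is that you split $\pi=\mathrm{Li}+R$ before integrating by parts, so only the remainder term is manipulated, whereas the paper integrates by parts with $\pi$ itself and then substitutes the asymptotic and ``reverts'' to recover the main term; the two computations are identical in content, and your remark about the endpoint at $y$ being prime is the same bounded discrepancy the paper silently absorbs into the error.
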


We give the proof for completeness.

\begin{proof}
  We use summation by parts and the Prime Number Theorem, which is the
  case $f(x)=1$, in the strong form
$$
\pi(x)=\int_2^x{\frac{du}{\log u}}+O\Bigl(\frac{x}{(\log x)^A}\Bigr)
$$
for $x\geq 2$ and any $A\geq 1$, with an implied constant depending
only on $A$ (this is a consequence of the error term in the Prime
Number Theorem due to de la Vall\'ee Poussin, see
e.g.\cite[Cor. 5.29]{IK}); this leads to
$$
\sum_{y\leq p\leq x}{f(p)}=f(x)\pi(x)-f(y)\pi(y)
-\int_y^x{f'(u)\pi(u)du},
$$
and after inserting the above asymptotic formula for $\pi(x)$ and
$\pi(u)$, we can revert the integration by parts to recover the main
term, while the error terms lead to the result.
\end{proof}

We now derive arithmetic consequences of Theorem~\ref{th-squarefree}.
Applying Theorem 2, we get
\begin{equation}\label{eq-sqf-appl}
\lim_{n\rightarrow +\infty}
(\log p_n)\Pr\[X_n\in ]a,b[\]=(b-a)\eta
\end{equation}
where 
$$
\eta=\frac{1}{2\pi}\int_{-\infty}^{+\infty}
\exp\(-4\int_0^1\frac{\sin^2\frac{tv}{2}}{v}\,dv\)\,dt=
\frac{1}{2\pi}\int_{-\infty}^{+\infty}
e^{2(\mathrm{Ci}(t)-\gamma)}\frac{dt}{|t|^2},
$$ 
the last expression coming from~(\ref{eq-ci}). Using the relation
between $\varphi$ and the Dickman-de Bruijn function $\rho$, namely
$$
\varphi(t)=\psi(t)\psi(-t)
$$
where $\psi(t)$ is the Fourier transform of $e^{-\gamma}\rho(u)$ (this
follows from~\cite[Th. 1.1, p. 5]{CS}), one gets
$$
\eta=e^{-2\gamma}\int_{\Rr}{\rho(u)^2du}=
0.454867\ldots
$$
(the numerical computation was done using \textsf{Sage}).
\par
This arithmetic application could certainly be proved with more
traditional methods of analytic number theory, when expressed
concretely as giving the asymptotic behavior as $n\rightarrow +\infty$
of
$$
\sum_{\alpha<r/s<\beta}{\Pr\[X_n=\frac{r}{s}\]},
$$
but it is nevertheless a good illustration of the general
probabilistic framework of mod-$\varphi$ convergence with an unusual
characteristic function.
\par
Although our theorem does not apply for the random model of~\cite{CS}
itself, it is quite easy to understand the behavior of the
corresponding probabilities in that case. Indeed, denoting
$$
Y_n=\exp((\log p_n)\zeta_n),
$$
which takes squarefree values, we have 
$$
\Pr[Y_n<e^a]=
\frac{1}{Z_x}\sum_{\stacksum{k<e^a}{p\mid k\Rightarrow p\leq
    x}}{\frac{\mu^2(k)}{k}}
$$
for any fixed $a\in\Rr$, where $x=p_n$, $\mu^2(k)$ is the indicator
function of squarefree integers and $Z_x$ is the normalizing factor
given by
$$
Z_x=\prod_{p\leq x}{(1+p^{-1})}.
$$
\par
For $x$ large enough and $a$ fixed, the second condition is vacuous,
and hence this is
$$
\frac{1}{Z_x}\sum_{k<e^a}{\frac{\mu^2(k)}{k}}.
$$
\par
As observed in~\cite[(3)]{CS}, we have $Z_x\sim
e^{\gamma}\zeta(2)^{-1}\log x$, and hence we get
$$
\lim_{n\rightarrow +\infty}{
(\log p_n)\Pr[Y_n<e^a]
}=
\zeta(2)e^{-\gamma}\sum_{k<e^a}{\frac{\mu^2(k)}{k}}.
$$
\par
When $a$ is large, this is equivalent to $e^{-\gamma}a$ (another easy
fact of analytic number theory), which corresponds to the local limit
theorem like~(\ref{eq-sqf-appl}), but we see that for fixed $a$, there
is a discrepancy.
\par
There is one last interesting feature of this model: the analogue of
Theorem~\ref{th-squarefree} for polynomials over finite fields
\emph{does not hold}, despite the many similarities that exist between
integers and such polynomials (see, e.g.,~\cite{KN2} for instances of
these similarities in related probabilistic contexts.)
\par
Precisely, let $q>1$ be a power of a prime number and $\Fr_q$ a finite
field with $q$ elements. For irreducible monic polynomials
$\pi\in\Fr_q[X]$, we suppose given independent random variables
$\eta_{\pi}$, $\eta'_{\pi}$ such that by
$$
\Pr[\eta_{\pi}=\pm 1]=\Pr[\eta'_{\pi}=\pm 1]=
\frac{|\pi|}{(|\pi|+1)^2},\quad\quad
\Pr[\eta_{\pi}=0]=\Pr[\eta'_{\pi}=0]=\frac{|\pi|^2+1}{(|\pi|+1)^2}
$$
where $|\pi|=q^{\deg(\pi)}$. Then for $n\geq 1$, let $\hat{X}_n$ be
the random variable
$$
\sum_{\deg(\pi)\leq n}{(\deg\pi)(\eta_{\pi}-\eta'_{\pi})},
$$
where the sum runs over all irreducible monic polynomials of degree at
most $n$. Then we claim that {\bf H1}, {\bf H2} hold for $\hat{X}_n$,
with the same characteristic function $\varphi(t)$ as in
Theorem~\ref{th-squarefree}, and $A_nt=nt$, but there is no
mod$-\varphi$ convergence.
\par
This last part at least is immediate: {\bf H3} fails by contraposition
because the local limit theorem for
$$
\lim_{n\rightarrow +\infty} n\Pr[a<\hat{X}_n<b]
$$
is not valid! Indeed, $\hat{X}_n$ is now real-valued, and if
$]a,b[\cap \Zr=\emptyset$, the probability above is always $0$,
whereas the expected limit $(b-a)\eta$ is not.
\par
We now check {\bf H2} in this case. Arguing as in the beginning of the
proof of Theorem~\ref{th-squarefree}, we get
$$
\Er[e^{it\hat{X}_n/n}]=
\prod_{\deg(\pi)\leq n}{
\(
1-\frac{4|\pi|}{(|\pi|+1)^2}\sin^2\(\frac{\deg(\pi)t}{2n}\)
\)
}.
$$
\par
Expanding the logarithm once more, we see that it is enough to prove
that (locally uniformly in $t$) we have
$$
\lim_{n\rightarrow +\infty}
\exp\(
-4\sum_{\deg(\pi)\leq n}{\frac{1}{|\pi|}\sin^2\(\frac{\deg(\pi)t}{2n}\)}
\)=\varphi(t).
$$
\par
We arrange the sum according to the degree of $\pi$, obtaining
$$
\sum_{\deg(\pi)\leq n}{\frac{1}{|\pi|}\sin^2\(\frac{\deg(\pi)t}{2n}\)}
=\sum_{j=1}^n{\frac{1}{q^j}\sin^2\(\frac{jt}{2n}\)\Pi_q(j)}
$$
where $\Pi_q(j)$ is the number of monic irreducible polynomials of
degree $j$ in $\Fr_q[X]$. The well-known elementary formula of Gauss
and Dedekind for $\Pi_q(j)$ shows that
$$
\Pi_q(j)=\frac{q^j}{j}+O(q^{j/2})
$$
for $q$ fixed and $j\geq 1$, and hence we can write the sum as
$$
\sum_{\deg(\pi)\leq n}{\frac{1}{|\pi|}\sin^2\(\frac{\deg(\pi)t}{2n}\)}
=\sum_{j=1}^n{\frac{1}{j}\sin^2\(\frac{jt}{2n}\)}+O
\(\sum_{j=1}^n{q^{-j/2}\sin^2\(\frac{tj}{2n}\)}\).
$$
\par
As $n$ goes to infinity, the second term converges to $0$ by the
dominated convergence theorem, while the first is a Riemann sum (with
steps $1/n$) for the integral
$$
\int_0^1{\sin^2\(\frac{tv}{2}\)\frac{dv}{v}},
$$
and hence we obtain the desired limit. (This is somewhat similar
to~\cite[Prop. 4.6]{abt}.)

\begin{rem}
  A more purely probabilistic example of the same phenomenon arises as
  follows: define
$$
\tilde{X}_n=\sum_{j=1}^n{(D_j-E_j)}
$$
where $(D_j, E_j)$ are globally independent random variables with
distribution
$$
\Pr[E_j=j]=\Pr[D_j=j]=\frac{1}{j},\quad\quad 
\Pr[E_j=0]=\Pr[D_j=0]=1-\frac{1}{j}.
$$
\par
Then the sequence $(\tilde{X}_n)$ also satisfies {\bf H1} and {\bf H2}
for the same characteristic function $\varphi(t)$ (by very similar
arguments), and does not satisfy {\bf H3} since $\tilde{X}_n$ is
integral-valued.
\end{rem}

\subsection{Random Matrices}
\label{sec-random-matrices}

Some of the first examples of mod-Gaussian convergence are related to
the ``ensembles'' of random matrices corresponding to families of
compact Lie groups, as follows from the work of Keating and
Snaith~\cite{KS1}, \cite{KS2}. Using this, and our main result, we can
deduce quickly some local limit theorems for values of the
characteristic polynomials of such random matrices.
\par
We consider the three standard families of compact matrix groups,
which we will denote generically by $\Gb_n$, where $\Gb$ is either $U$
(unitary matrices of size $n$), $USp$ (symplectic matrices of size
$2n$) or $SO$ (orthogonal matrices of determinant $1$ and
size\footnote{\ The odd case could be treated similarly.}  $2n$). In
each case, we consider $\Gb_n$ as a probability space by putting the
Haar measure $\mu_n$ on $\Gb_n$, normalized so that
$\mu_n(\Gb_n)=1$. The relevant random variables $(X_n)$ are defined as
suitably centered values of the characteristic polynomial
$\det(T-g_n)$ where $g_n$ is a $\Gb_n$-valued random variable which is
$\mu_n$-distributed. Precisely, define
$$
\alpha_n=
\left\{
\begin{array}{cc}
0&\text{ if } \Gb=U, \\
\frac{1}{2}\log (\pi n/2)&\text{ if } \Gb=USp,\\
\frac{1}{2}\log (8\pi/n)&\text{ if } \Gb=SO,
\end{array}
\right.
$$
and consider $X_n=\log\det(1-g_n)-\alpha_n$; this is real-valued
except for $\Gb=U$, in which case the determination of the logarithm
is obtained from the standard Taylor series at $z=1$.
\par
Now define the linear maps
$$
A_n(t)=
\left\{
\begin{array}{cc}
\(\frac{\log n}{2}\)^{1/2}(t_1,t_2)&\text{ if } \Gb=U, \\
\(\log\(\frac{n}{2}\)\)^{1/2}t&\text{ otherwise,}
\end{array}
\right.
$$
and their inverses $\Sigma_n$ (these are diagonal so $A_n^*=A_n$,
$\Sigma_n^*=\Sigma_n$). 
\par
Finally, let $\varphi$ be the characteristic function of a standard
complex (if $\Gb=U$) or real gaussian random variable (if $\Gb=USp$ or
$SO$); in particular {\bf H1} is true. It follows from the work of
Keating and Snaith that in each case $\varphi_n(\Sigma_nt)$ converges
continuously to $\varphi(t)$, i.e., that ${\bf H2}$ holds. In fact, in
each case, there is a continuous (in fact, analytic) limiting function
$\Phi_{\Gb}(t)$ such that
$$
\varphi_n(t)=\varphi(A_n^*t)\Phi_{\Gb}(t)(1+o(1))
$$
for any fixed $t$, as $n$ goes to infinity. These are given by
$$
\Phi_{\Gb}(t)=
\left\{
\begin{array}{cc}
\frac{G(1+\frac{it_1-t_2}{2})
G(1+\frac{it_1+t_2}{2})}{G(1+it_1)}&\text{ if } \Gb=U, \\
\frac{G(3/2)}{G(3/2+it)}&\text{ if } \Gb=USp,\\
\frac{G(1/2)}{G(1/2+it)}&\text{ if } \Gb=SO,
\end{array}
\right.
$$
in terms of the Barnes $G$-function. Detailed proofs can be found
in~\cite[\S 3, Prop. 12, Prop. 15]{KN}, and from the latter arguments,
one obtains uniform estimates
$$
|\varphi_n(t)|\leq C|\Phi_{\Gb}(t)\varphi(A_n^*t)|
$$
for all $t$ such that $|t|\leq n^{1/6}$, where $C$ is an absolute
constant. This immediately gives the uniform integrability for
$\varphi_n(\Sigma_n^*t)\one_{|\Sigma_nt|\leq k}$ since $|\Sigma_n^*
t|$ is only of logarithmic size with respect to $n$. In other words,
we have checked {\bf H3}, and hence there is mod-$\varphi$
convergence.
\par
Consequently, applying Theorem~\ref{th-local-limit}, we derive the
local limit theorems (already found in~\cite{KN}):

\begin{theo}
  For $\Gb=U$, $USp$ or $SO$, for any bounded Jordan-measurable set
  $B\subset \Rr$ or $\Cr$, the latter only for $\Gb=U$, we have
$$
\lim_{n\rightarrow +\infty} |\det(A_n)|\quad
{\mu_n(g\in\Gb_n\,\mid\, 
\log \det(1-g)-\alpha_n\in B)}=\frac{m(B)}{(2\pi)^{d/2}}
$$
with $d=2$ for $\Gb=U$ and $d=1$ otherwise.
\end{theo}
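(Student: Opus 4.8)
The plan is to derive this as a direct consequence of Theorem~\ref{th-local-limit}, since the substantive work --- verifying that mod-$\varphi$ convergence holds for $X_n=\log\det(1-g_n)-\alpha_n$ --- was already carried out in the discussion preceding the statement. To recall that structure: {\bf H1} holds because $\varphi$ is the characteristic function of a standard gaussian, real for $\Gb=USp$ or $SO$ and complex for $\Gb=U$, hence integrable; {\bf H2} is precisely the central limit theorem of Keating and Snaith \cite{KS1, KS2}, which gives $\varphi_n(\Sigma_n^*t)\to\varphi(t)$ locally uniformly; and {\bf H3} follows from the uniform estimate $|\varphi_n(t)|\le C|\Phi_{\Gb}(t)\varphi(A_n^*t)|$ valid for $|t|\le n^{1/6}$, because on the region $|\Sigma_n^*t|\le k$ the point $t=A_n^*(\Sigma_n^*t)$ has $|t|\le k\|A_n\|=O((\log n)^{1/2})$, which is $\le n^{1/6}$ for $n$ large; substituting $t\mapsto\Sigma_n^*t$ in the estimate then gives $|\varphi_n(\Sigma_n^*t)|\one_{|\Sigma_n^*t|\le k}\le C\,M_k\,|\varphi(t)|$ with $M_k=\sup_{|s|\le k}|\Phi_{\Gb}(s)|<\infty$ (the functions $\Phi_{\Gb}$ are continuous and non-vanishing on all of $\Rr^d$), which is the dominated form~(\ref{eq-dominated}). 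So mod-$\varphi$ convergence holds with the stated $A_n$.

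Granting this, I would simply invoke the consequence~(\ref{eq-limit-jordan}) of Theorem~\ref{th-local-limit}: for every bounded Jordan-measurable $B$ (with $B\subset\Cr\cong\Rr^2$ only in the case $\Gb=U$) one has
$$|\det(A_n)|\,\Pr[X_n\in B]\rightarrow\frac{d\mu}{dm}(0)\,m(B),$$
where $\mu$ is the probability measure with characteristic function $\varphi$. Since $\Pr[X_n\in B]=\mu_n(g\in\Gb_n\mid\log\det(1-g)-\alpha_n\in B)$, it remains only to evaluate $\frac{d\mu}{dm}(0)$: the standard gaussian on $\Rr^d$ has density $x\mapsto(2\pi)^{-d/2}\exp(-|x|^2/2)$, so $\frac{d\mu}{dm}(0)=(2\pi)^{-d/2}$, with $d=2$ for $\Gb=U$ and $d=1$ for $\Gb=USp$, $SO$. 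This produces exactly the asserted limit $m(B)/(2\pi)^{d/2}$.

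I do not expect a genuine obstacle: the content of the statement lies entirely in the mod-$\varphi$ convergence established above and in Theorem~\ref{th-local-limit}. The only points that need a moment of care are bookkeeping ones --- that the specific centering $\alpha_n$ and normalization $A_n$ are exactly those for which the Keating--Snaith limit is a \emph{standard} gaussian, so that the value of its density at the origin is precisely $(2\pi)^{-d/2}$; and, in the unitary case, that the branch of $\log\det(1-g_n)$ obtained from the Taylor series at $z=1$ makes $X_n$ a well-defined $\Cr$-valued random variable, so that the two-dimensional form of the local limit theorem is legitimately applicable with $d=2$.
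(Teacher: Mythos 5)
Your proposal is correct and follows exactly the paper's route: mod-$\varphi$ convergence is established in the discussion preceding the theorem (H1 from integrability of the gaussian characteristic function, H2 from the Keating--Snaith central limit theorem, H3 from the uniform estimate $|\varphi_n(t)|\le C|\Phi_{\Gb}(t)\varphi(A_n^*t)|$ on $|t|\le n^{1/6}$), and the statement is then read off from~(\ref{eq-limit-jordan}) together with $\frac{d\mu}{dm}(0)=(2\pi)^{-d/2}$. Two small quibbles, neither material: the range check you want is simply $|\Sigma_n^*t|\le n^{1/6}$, which is immediate from the indicator $\one_{|\Sigma_n^*t|\le k}$ once $n$ is large (there is no need to bound $|t|$ itself); and your parenthetical that the $\Phi_{\Gb}$ are non-vanishing on all of $\Rr^d$ is not used and is in fact false (for instance $\Phi_U(0,2)=0$ since $G(0)=0$) --- continuity, hence boundedness on the compact set $|s|\le k$, is all that is required.
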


Theorem~\ref{th-unitary}, stated in the introduction, is the special
case $\Gb=U$, enhanced by applying Proposition~\ref{pr-shift-mean}. 
\par
As in~\cite[\S 4]{KN}, one can derive arithmetic consequences of these
local limit theorems, involving families of $L$-functions over finite
fields, by appealing to the work of Katz and Sarnak. The interested
readers should have no difficulty checking this using the detailed
results and references in~\cite{KN}.
\par
Instead, we discuss briefly a rather more exotic type of random
matrices, motivated by the recent results in~\cite{kst} concerning
certain averages of $L$-functions of Siegel modular
forms. In~\cite[Rem. 1.3]{kst}, the following model is suggested: let
$G_n=SO_{2n}(\Rr)$, with Haar measure $\mu_n$, and consider the
measure
$$
\nu_n(g)=\frac{1}{2}\det(1-g)d\mu_n(g)
$$
on $G_n$. The density $\det(1-g)$ is non-negative on $G_n$ (because
eigenvalues of a matrix in $SO(2n,\Rr)$ come in pairs $e^{i\theta}$,
$e^{-i\theta}$, and $(1-e^{i\theta})(1-e^{-i\theta})\geq 0$); the fact
that this is a probability measure will be explained below. In
probabilistic terms, this is the ``size-biased'' version of $\mu_n$.

\begin{theo}
  Let $X_n=\log\det(1-\tilde{g}_n)-\frac{1}{2}\log( 32\pi
  n)$, where $\tilde{g}_n$ is a $G_n$-valued random variable
  distributed according to $\nu_n$. Let $\varphi$ be the
  characteristic function of a standard real gaussian. Then we have
  mod-$\varphi$ convergence with $A_nt=(\log \frac{n}{2})^{1/2}t$, and
  in particular
$$
\lim_{n\rightarrow +\infty} \sqrt{\log\frac{n}{2}}\quad
{\nu_n\(g\in\Gb_n\,\mid\, \log \det(1-g)-\frac{1}{2}\log
  (32\pi n)\in B\)}=\frac{m(B)} {\sqrt{2\pi}}.
$$
\end{theo}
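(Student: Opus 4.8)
The plan is to reduce the statement to the explicit moment formula for the characteristic polynomial on $SO(2n)$ and then invoke Theorem~\ref{th-local-limit}, in complete parallel with the treatment of $U$, $USp$ and $SO$ above; size-biasing will only have the effect of shifting a complex parameter. Since the eigenvalues of $g\in SO(2n)$ occur in conjugate pairs $e^{\pm i\theta_j}$, we have $\det(1-g)=\prod_j|1-e^{i\theta_j}|^2\ge 0$, so $\nu_n$ is a non-negative measure; that it has total mass $1$ is exactly the classical evaluation $\int_{SO(2n)}\det(1-g)\,d\mu_n(g)=2$, the value at $s=1$ of the Keating--Snaith moment formula
$$
M_n(s)=\int_{SO(2n)}\det(1-g)^{s}\,d\mu_n(g),
$$
an explicit product of ratios of $\Gamma$-functions. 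The characteristic function of $X_n$ is then, for $t\in\Rr$,
$$
\varphi_n(t)=\Er_{\nu_n}\bigl[\det(1-\tilde g_n)^{it}\bigr]\,e^{-it\alpha_n}=\tfrac12 M_n(1+it)\,e^{-it\alpha_n},\qquad \alpha_n=\tfrac12\log(32\pi n),
$$
so the whole problem becomes the asymptotic analysis of $M_n(1+it)$ as $n\to\infty$, i.e.\ of the $SO(2n)$ moment formula with the parameter shifted from $it$ to $1+it$.

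Property {\bf H1} is immediate, as $\varphi(t)=e^{-t^2/2}$ is integrable. For {\bf H2} I would run Stirling's formula (equivalently, the asymptotics of the Barnes $G$-function) on $M_n(1+it)$, exactly as in \cite[\S 3, Prop.~15]{KN} for the ordinary $SO$ family: replacing $it$ by $1+it$ changes the centering constant and produces a different (but still continuous, indeed analytic) limiting function $\Psi$, whose precise form is a ratio of Barnes $G$-functions and will not be needed here. One obtains
$$
\varphi_n(t)=\varphi(A_n^*t)\,\Psi(t)\,(1+o(1))
$$
for each fixed $t$, with $A_nt=(\log\tfrac n2)^{1/2}t$, and hence, specialising to $t\mapsto\Sigma_n^*t=t/(\log\tfrac n2)^{1/2}$, the locally uniform convergence $\varphi_n(\Sigma_n^*t)\to e^{-t^2/2}$, which is {\bf H2}. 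For {\bf H3} the same computation, carried out with explicit error terms as in \cite[\S 3]{KN}, yields a uniform bound
$$
|\varphi_n(t)|\le C\,\bigl|\Psi(t)\,\varphi(A_n^*t)\bigr|\qquad\text{for all }|t|\le n^{1/6};
$$
since $|\Sigma_n^*t|$ is only of logarithmic size in $n$, on the region $|\Sigma_n^*t|\le k$ this gives $|\varphi_n(\Sigma_n^*t)|\,\one_{|\Sigma_n^*t|\le k}\le C\bigl(\sup_{|s|\le k}|\Psi(s)|\bigr)e^{-t^2/2}$, an integrable bound independent of $n$. This is the domination~(\ref{eq-dominated}), so {\bf H3} holds and there is mod-$\varphi$ convergence; the displayed local limit theorem is then the conclusion of Theorem~\ref{th-local-limit}, with $\frac{d\mu}{dm}(0)=(2\pi)^{-1/2}$ and $|\det(A_n)|=(\log\tfrac n2)^{1/2}$.

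The only genuinely new ingredient is the asymptotic evaluation of $M_n(1+it)$ with error terms uniform in $t$; this is routine Stirling/Barnes-$G$ analysis, and the main care needed is in tracking how the $\Gamma$-factors behave near $s=1$ and how this fixes the centering $\alpha_n=\tfrac12\log(32\pi n)$ — whose $\log n$-term has the \emph{opposite} sign to the ordinary $SO$ centering $\tfrac12\log(8\pi/n)$, reflecting the mean shift by (asymptotically) $+\log n$ produced by the exponential tilt $\det(1-g)=e^{\log\det(1-g)}$; heuristically this also explains why the size-biased $SO(2n)$ ensemble falls, at the level of the fluctuations of $\log\det(1-g)$, into the \emph{symplectic} universality class. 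An alternative and possibly shorter route: multiplying the $SO(2n)$ eigenvalue density by $\det(1-g)=\prod_j 4\sin^2(\theta_j/2)$ reproduces, on the $n$ nontrivial angles, the eigenvalue density of $SO(2n+1)$, so one could instead import the odd-orthogonal moment computation (with the trivial eigenvalue discarded) from \cite{KS1,KS2} and bypass a fresh asymptotic analysis altogether.
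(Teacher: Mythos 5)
Your proposal follows the same route as the paper: write $\varphi_n(t)=\tfrac12 M_n(1+it)\,e^{-it\alpha_n}$ via the Keating--Snaith moment formula for $SO(2n)$, pass to Barnes $G$-functions, and run the asymptotics of \cite[\S 3]{KN} (there via \cite[Prop.~17]{KN}) to obtain both the pointwise limit ({\bf H2}) and a uniform bound on $|t|\le n^{1/6}$ giving domination ({\bf H3}), then invoke Theorem~\ref{th-local-limit}; the identification of the limiting factor with $\Phi_{USp}$ and the sign-flipped centering match the paper exactly. The one genuinely new element you add is the observation that size-biasing the $SO(2n)$ eigenvalue density by $\det(1-g)=\prod_j 4\sin^2(\theta_j/2)$ reproduces the Weyl density of $SO(2n+1)$ on the nontrivial angles, which would let one import the odd-orthogonal moment asymptotics wholesale; the paper does not take this shortcut, though it would streamline the Barnes-$G$ bookkeeping.
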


\begin{proof}
  The characteristic function of $Y_n=\log\det(1-\tilde{g}_n)$ is half
  of the value at $s=1+it$ of the Laplace transform
  $\Er[e^{s\log\det(1-g_n)}]$, where $g_n$ is Haar-distributed. The latter
  is computed for all complex $s$ in~\cite[(56)]{KS1}, and we get
$$
2\Er[e^{itY_n}]=
\Er[e^{(1+it)\log \det(1-g_n)}]=2^{2n(1+it)}
\prod_{1\leq j\leq n}
{
\frac{\Gamma(j+n-1)\Gamma(j+it+1/2)}{\Gamma(j-1/2)\Gamma(j+it+n)}
}.
$$
\par
At this point, the reader may check easily (by recurrence on $n$ if
needed) that this gives the right values $\Er[e^{itY_n}]=1$ for $t=0$,
confirming the normalizing factor $1/2$ used in the definition of
$\nu_n$.
\par
To go further, we transform the right-hand side into values of the
Barnes function $G(z)$, as in~\cite[\S 4.3]{KN}, to get
$$
2\Er[e^{itY_n}]=2^{2n(1+it)}\frac{G(1/2)}{G(3/2+it)}
\frac{G(2n)G(n+3/2+it)G(1+it+n)}{G(n)G(n+1/2)G(2n+1+it)}.
$$
\par
Applying $\Gamma(z)G(z)=G(z+1)$, we transform this into
\begin{align*}
2\Er[e^{itY_n}]=2^{2n(1+it)}\frac{G(1/2)}{G(3/2+it)}&
\frac{\Gamma(it+n)\Gamma(it+n+1/2)}{\Gamma(it+2n)}\\
&\quad\quad\quad\times\frac{G(2n)G(n+it)G(1/2+it+n)}{G(n)G(n+1/2)G(2n+it)}
\end{align*}
and the last ratio of Barnes functions (together with the factor
$2^{2nit}$) is exactly the one handled in~\cite[Prop. 17,
(4)]{KN}. With the asymptotic formula that follows, the Legendre
duplication formula and $\Gamma(1/2)=\sqrt{\pi}$, we deduce
$$
2\Er[e^{itY_n}]=
2\frac{G(3/2)}{G(3/2+it)}
\frac{\Gamma(2it+2n)}{\Gamma(it+2n)}
\(\frac{n}{2}\)^{-t^2/2}\(\frac{8\pi}{n}\)^{it/2}(1+o(1))
$$
uniformly for $|t|\leq n^{1/6}$. Since
$$
\frac{\Gamma(2it+2n)}{\Gamma(it+2n)}
=(2n)^{it}(1+o(1))
$$
in this range, we get
$$
\Er[e^{itY_n}]= \frac{G(3/2)}{G(3/2+it)} \varphi(A_n^*t)(32\pi
n)^{it/2}(1+o(1)),
$$
uniformly for $|t|\leq n^{1/6}$, and the result follows.
\end{proof}

The most obvious feature of this exotic model of orthogonal matrices
is the ``shift'' of the average; whereas, for Haar-distributed $g\in
SO_{2n}(\Rr)$, the value $\log \det(1-g)$ is typically small (mean
about $\log\sqrt{8\pi/n}$), it becomes typically large (mean $\log
\sqrt{32\pi n}$, of similar order of magnitude as the mean for a
symplectic matrix of the same size) when $g$ is considered to be
distributed according to $\nu_n$. This is consistent with the
discussion in~\cite[Rem. 1.3]{kst}, especially since the ``limiting
function'' that appears here is $\Phi_{USp}$.

\subsection{Stochastic model of the Riemann zeta function}

The following ``naive'' model of the Riemann zeta function on the
critical line is surprisingly helpful.  The basic ingredient is a
sequence of iid variables $Y_p\colon \Omega \rightarrow \Tr$ where
$\Tr$ is the unit circle in $\Cr$ and the variables $Y_p$ are
uniformly distributed over $\Tr$. For notational ease the sequence is
ordered by the prime numbers. In what follows $p$ will always denote a
prime number. The random variables we consider are constructed as
follows.  First we take finite products $Z_n=\prod_{p\le
  n}\(1-\frac{Y_p}{\sqrt{p}}\)$.  If we replace the factors $Y_p$ by
$\exp(ipt)$, then the product appears in the study of the Riemann
$\zeta$-function. An easy application of Weyl's lemma on uniform
distributions shows that $\(\exp(ipt)\)_{p\le n}$ defined on $[0,T]$
(with normalised Lebesgue measure) tend (as $T\rightarrow \infty$) to
$(Y_p)_{p\le n}$. The random variables $X_n$ are then defined as minus
the logarithm of $Z_n$, (taken along its principal branch defined as
$\log(1)=0$).  So 
$$
X_n=-\sum_{p\le
  n}\log\(1-\frac{Y_p}{\sqrt{p}}\)=\sum_{p\le
  n}\sum_k\frac{1}{k}\(\frac{Y_p}{\sqrt{p}}\)^k.
$$
\par
These sums clearly converge. Because of this explicit form we can
calculate the characteristic functions. The calculations are done in
\cite[\S 3, Ex. 2]{KN} and this yields the following.
$$
\varphi_n(t)=\Er[\exp(it\cdot X_n)]= \prod_{p\le
  n}{}_2F_1\(\frac{1}{2}(it_1+t_2),
\frac{1}{2}(it_1-t_2);1;\frac{1}{p}\),
$$
where $t=(t_1,t_2)\in \Rr^2$, $t\cdot x=t_1 x_1+t_2x_2$ is the inner
product in $\Rr^2$ and ${}_2F_1$ denotes the Gauss hypergeometric
function. Straightforward estimates (see~\cite{KN} for details) then
give
\begin{enumerate}
\item $|\varphi_n(t)| \le c(t) \exp(-\frac{1}{16}(\log\log n) |t|^2)$,
  where $c$ is a non-decreasing function (in fact one can take a
  constant);
\item $\varphi_n\(\sqrt{\frac{2}{\log\log n}}t\)\rightarrow
  \exp(-\frac{1}{2}|t|^2)$.
\end{enumerate}

The conditions of Theorem~\ref{th-local-limit} are fulfilled and hence
we have
$$
\frac{\log\log n}{2}\Pr[X_n\in B]\rightarrow \frac{1}{2\pi} m(B).
$$
for any bounded Jordan measurable set $B\subset \Cr$.

\subsection{The Riemann zeta function on the critical line}

The results in this section are conjectural, but they are of interest
to number theorists.  By work of Selberg, the central limit theorem
for $\log \zeta(1/2+it)$ is known, after renormalizing by
$\sqrt{\log\log T}$, see e.g.~\cite{HNY}. This is proved by asymptotic
estimations of the moments, and there is no known bound for the
corresponding characteristic functions. Thus, we cannot currently
apply our theorems.
\par
However, Keating and Snaith (\cite{KS1}, \cite{KS2}) have proposed the
following precise conjecture (based on links with Random Matrix
Theory) concerning the characteristic function: for any
$t=(t_1,t_2)\in\Rr^2$, they expect that
$$
\frac{1}{T}\int_0^T{\exp(i t\cdot \log\zeta(1/2+iu))du}
\sim \Phi(u)\exp\Bigl(-\frac{\log\log T}{4}|u|^2\Bigr)
$$
as $T\rightarrow +\infty$, where the limiting function is the product
of the corresponding factors for unitary matrices and for the
``stochastic'' version of $\zeta$, described in the previous sections,
i.e.,
\begin{align*}
  \Phi(t_1,t_2)&=\frac{G(1+\frac{it_1-t_2}{2})
    G(1+\frac{it_1+t_2}{2})}{G(1+it_1)}
  \\
  &\times \prod_{p}{{}_2F_1(\frac{1}{2}(it_1+t_2),
    \frac{1}{2}(it_1-t_2);1;p^{-1})}
\end{align*}
(the normalization of $\log\zeta(1/2+iu)$ is obtained by continuation
of the argument from the value $0$ for $\zeta(\sigma+iu)$ when
$\sigma$ real tends to infinity, except for the countable set of $u$
which are ordinates of zeros of $\zeta$.)
\par
In \cite[Cor. 9]{KN}, it is shown that a suitable uniform version of
this conjecture implies local limit theorems for
$$
\frac{1}{T}m(u\in [0,T]\,\mid\, \log\zeta(1/2+iu)\in B)
$$
and, as a corollary, implies that the set of values of
$\zeta(1/2+iu)$, $u\in\Rr$, is \emph{dense} in $\Cr$, which is an old
and intriguing conjecture of Ramachandra.
\par
The mod-$\varphi$ framework allows us to show that a much weaker
statement than the one considered in~\cite{KN} is already sufficient
to get the same local limit theorems. Indeed, we consider the
following much statement, which of course implies Ramachandra's
conjecture, as being very likely to be true:

\begin{conj}[Quantitative density of values of $\zeta(1/2+it$)]
  \label{conj-zeta}
  For any bounded Jordan-measurable subset $B\subset \Cc$, we have
$$
\lim_{T\rightarrow+\infty} \frac{\frac{1}{2}\log\log T} {T} m(u\in
[0,T]\,\mid\, \log\zeta(1/2+iu)\in B)=\frac{m(B)}{2\pi}.
$$
\end{conj}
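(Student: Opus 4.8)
The statement is conditional, so what I would actually establish is the implication \emph{a weak quantitative form of the Keating--Snaith conjecture $\Longrightarrow$ Conjecture~\ref{conj-zeta}}, the interest being that this weak form is much less than the uniform version invoked in~\cite[Cor. 9]{KN}. The plan is to exhibit $\log\zeta(1/2+iu)$ as a mod-$\varphi$ convergent family and then quote Theorem~\ref{th-local-limit}. Take $d=2$, equip $[0,T]$ with normalized Lebesgue measure, and let $X_T(u)=\log\zeta(1/2+iu)$ (defined by continuation of the argument, as in the statement). Let $\varphi(t)=\exp(-|t|^2/2)$ be the characteristic function of a standard complex Gaussian; then {\bf H1} holds and $\tfrac{d\mu}{dm}(0)=(2\pi)^{-2}\int_{\Rr^2}\varphi\,dm=\tfrac{1}{2\pi}$. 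Choose $A_T=A_T^*=(\tfrac12\log\log T)^{1/2}\,\mathrm{Id}$, so that $\Sigma_T=A_T^{-1}\to0$ and $|\det(A_T)|=\tfrac12\log\log T$. With this bookkeeping, Theorem~\ref{th-local-limit}, in the form~(\ref{eq-limit-jordan}), says precisely that $\tfrac12\log\log T\cdot\Pr[X_T\in B]\to\tfrac{1}{2\pi}m(B)$ for every bounded Jordan-measurable $B\subset\Cr$; rewriting $\Pr[X_T\in B]=\tfrac1T\,m(u\in[0,T]\mid\log\zeta(1/2+iu)\in B)$ is exactly Conjecture~\ref{conj-zeta}.

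It therefore suffices to check {\bf H2} and {\bf H3} for $(X_T)$. Property {\bf H2} requires that $\varphi_T(\Sigma_T^*t)=\tfrac1T\int_0^T\exp\bigl(i\,\Sigma_T^*t\cdot\log\zeta(1/2+iu)\bigr)\,du\to\varphi(t)$ locally uniformly; since $\Sigma_T^*t=(\tfrac12\log\log T)^{-1/2}t$, this is precisely Selberg's central limit theorem for $\log\zeta(1/2+iu)$, whose local uniformity is standard (see e.g.~\cite{HNY}), and this step is \emph{unconditional}. The only conditional input is {\bf H3}. By Corollary~\ref{cor-sufficient-condition} it is enough to assume a domination
\[
|\varphi_T(t)|\le h(A_T^*t)\,c(|t|)
\]
for all $T$ and all $t\in\Rr^2$, with $h$ integrable on $\Rr^2$ and $c\colon\Rr\to\Rr_+$ non-decreasing, $c(0)=1$. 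Such a bound is predicted by Keating--Snaith: from $\varphi_T(t)=\Phi(t)\varphi(A_T^*t)(1+o(1))$ holding uniformly merely in a range $|t|\le C(\log\log T)^{1/2}$ for each $C$, one gets $|\varphi_T(t)|\ll_C|\Phi(t)|\,e^{-|A_T^*t|^2/2}$ there; since $|\Sigma_T^*t|\le k$ forces $|t|\le k(\tfrac12\log\log T)^{1/2}$, taking $h(s)=e^{-|s|^2/2}$ and $c(r)=\sup_{|s|\le r}|\Phi(s)|$ (finite because $\Phi$, built from the Barnes $G$-function and a convergent Euler product, is continuous on $\Rr^2$) verifies the hypothesis of Corollary~\ref{cor-sufficient-condition}. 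This is visibly far weaker than the uniform Keating--Snaith conjecture used in~\cite{KN}: no precise error term is needed, only a one-sided bound of Gaussian shape in a logarithmic window of $t$.

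The main obstacle is exactly this step. As emphasized in the text, Selberg's proof proceeds through asymptotics of moments and yields \emph{no} uniform control of the characteristic function $\varphi_T(t)$, so at present no unconditional bound of the required shape is available, and the argument is necessarily conditional. The substance of the proposal is thus the reduction itself: \emph{any} domination of the form demanded by Corollary~\ref{cor-sufficient-condition} — in particular the one predicted by Random Matrix Theory — upgrades Selberg's central limit theorem to mod-$\varphi$ convergence and hence, via Theorem~\ref{th-local-limit}, delivers Conjecture~\ref{conj-zeta} (and a fortiori Ramachandra's density conjecture). Once the domination is granted, the remaining ingredients — the continuity argument underlying {\bf H2}, the uniform-integrability bookkeeping for {\bf H3}, and the passage from continuous compactly supported test functions to Jordan-measurable sets inside Theorem~\ref{th-local-limit} — are entirely routine.
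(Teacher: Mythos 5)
Your reduction is correct and follows the same route as the paper: realize $\log\zeta(1/2+iu)$ (for $u$ uniform on $[0,T]$) as a mod-$\varphi$ convergent family with $\varphi$ a standard complex Gaussian and $A_T=(\tfrac12\log\log T)^{1/2}\,\mathrm{Id}$, get {\bf H1} trivially and {\bf H2} from Selberg's central limit theorem, and feed a conjectural bound on the characteristic function into the local limit theorem. The difference lies in the strength of the conditional input, and it is the whole point of this section of the paper. You assume a Gaussian-shape domination $|\varphi_T(t)|\ll_C |\Phi(t)|e^{-|A_T^*t|^2/2}$ in a window $|t|\le C(\log\log T)^{1/2}$, i.e.\ essentially the upper-bound half of the Keating--Snaith asymptotic. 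The paper instead isolates the much weaker hypothesis~(\ref{eq-hypothesis-zeta}): for each $k$, a bound $C_k/(1+|t|^4(\log\log T)^2)$ valid only in the \emph{bounded} range $|t|\le k$ --- for fixed $t\ne0$ this is mere polynomial decay of order $(\log\log T)^{-2}$, not Gaussian decay. After the substitution $t=\Sigma_T^*s$ this becomes $|\varphi_T(\Sigma_T^*s)|\le C_k h(s)$ with $h(s)=(1+4|s|^4)^{-1}\in L^1(\Rr^2)$, which is exactly the dominated form~(\ref{eq-dominated}) of {\bf H3}; no information about $\varphi_T$ outside $|t|\le k$ is needed, since the indicator $\one_{|\Sigma_T^*s|\le k}$ confines the argument of $\varphi_T$ to that ball. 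So your proof is valid, but it proves a weaker theorem (stronger hypothesis, same conclusion); you should weaken your assumed domination to the quartic-decay bound, which is far more plausibly accessible than anything of Gaussian shape. A minor point: you invoke Corollary~\ref{cor-sufficient-condition}, whose statement requires the bound $|\varphi_T(t)|\le h(A_T^*t)c(|t|)$ for \emph{all} $t\in\Rr^2$, while your Keating--Snaith input only supplies it in a logarithmic window; this causes no harm because only the range $|\Sigma_T^*s|\le k$ matters, but it is cleaner to verify~(\ref{eq-dominated}) directly, as the paper does.
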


The point is that this follows using Theorem~\ref{th-local-limit} from
fairly weak decay estimates for the characteristic function of
$\log\zeta(1/2+it)$ (in comparison with what the Keating-Snaith
conjecture suggests). For instance, if for all $k>0$ there exists
$C_k\geq 0$ such that
\begin{equation}\label{eq-hypothesis-zeta}
\Bigl|\frac{1}{T}\int_0^T{\exp(i t\cdot 
\log\zeta(1/2+iu))du}\Bigr|
\leq \frac{C_k}{1+|t|^4(\log\log T)^2}
\end{equation}
for all $T\geq 1$ and $t$ with $|t|\leq k$, then
Conjecture~\ref{conj-zeta} is true. 
Indeed, in Theorem~\ref{th-local-limit}, we can take $\varphi$ to be
the characteristic function of a standard complex gaussian and $X_n$
to be (for some arbitrary sequence $T_n$ going to $+\infty$) a random
variable with law given by the probability distribution of
$\log\zeta(1/2+iu)$ for $u$ uniform on $[0,T_n]$. These satisfy {\bf
  H1} trivially, and {\bf H2} holds with
$$
A_n(t_1,t_2)=A_n^*(t_1,t_2)=\sqrt{\frac{1}{2}\log\log T_n}
(t_1,t_2),
$$
because of Selberg's Central Limit Theorem. The
hypothesis~(\ref{eq-hypothesis-zeta}) states that, for any $k>0$, we
have
$$
|\varphi_n(t)|\leq C_kh(A_n^*t)
$$
for $|t|\leq k$, with
$$
h(t_1,t_2)=\frac{1}{1+4|t|^4},
$$
or equivalently
$$
|\varphi(\Sigma_n^*t)|\leq C_k h(t)
$$
for $|\Sigma_n^*t|\leq k$. Since $h\in L^1(\Rr^2)$, this
gives~(\ref{eq-dominated}), and we get the conjectured statement from
the local limit theorem.
\par
The significance of this remark is the fact that, for fixed $t\not=0$,
the decay rate of the characteristic function which is required is
``only'' of order $(\log\log T)^{-2}$, which is much weaker than what
is suggested by the Keating-Snaith conjecture, and therefore might be
more accessible.

\section*{Appendix A}

We sketch here a proof of Theorem~\ref{th-approx}. Suppose the support
of $f$ is contained in $[-k+1,k-1]^d$.  Since we can construct
approximations for $f^+$ and $f^-$ separately, we can assume without
loss of generality that $f\ge 0$. 
\par
Let $\ep>0$ and let $N=\ep^{-1/(2d)}$, assuming $\ep$ small enough
that $N>k$. 
Let then $\theta$ be a continuous function on $\Rr^d$ such that
$$
\ep\le\theta\le \ep +\ep^{1/(4d)},
$$ 
and $\theta=\ep +\ep^{1/(4d)}$ on the support of $f$, while
$\theta=\ep$ outside $[-k,k]^d$. Further, let $p$ be a trigonometric
polynomial in $d$ variables, with periods $(2N,\cdots,2N)$, which
approximates $f+\theta$ uniformly on $[-N,N]^d$ up to an error
$\ep$. Clearly, we have then 
$$
p\ge f+\theta-\ep\ge f\ge 0.
$$
\par
The function $p$ is considered as a periodic function on $\Rr^d$, and
it remains non-negative of course.  The Fourier transform of $p$, in
the sense of distributions, is a finite linear combination of Dirac
measures, hence has compact support.
\par
Now we consider
$$
h(x)=\prod_j \frac{\sin^2(ax_j)}{(ax_j)^2}
$$
where $a=\frac{\sqrt{2\delta}}{k}$, for some $\delta>0$; we find that
$$
(1-\delta)^d \le h\le 1
$$
on $[-k,k]^d$. We select
$$
\delta=\frac{\theta(0)-\ep}{d(\theta(0)-\ep+\Vert f\Vert_\infty)},
$$
and then we claim that the function $g_1=ph$ satisfies $g_1\ge f$,
while $\int (g_1-f)dm$ can be made arbitrarily small if $\ep$ is small
enough.
\par
Indeed, first of all we have $g_1\ge 0$ everywhere, while on the
support of $f$ we get
\begin{align*}
  ph-f
  &\ge(1-\delta)^d(f+\theta-\ep)-f\\
  &\ge(1-\delta)^d(\theta-\ep)-(1-(1-\delta)^d)f\\
  &\ge (1-d\delta)(\theta-\ep)-d\delta\Vert f\Vert_\infty\\
  &\ge (\theta-\ep)-d\delta(\theta-\ep+\Vert f\Vert_\infty)\ge
  0,\quad\text{ by the choice of $\delta$.}
\end{align*}
\par
Next the integral $\int(g_1-f)dm$ is estimated as follows (using the
notation $C$ for non-negative constants, the value of which may change
from line to line, and $|x|_\infty=\max_{1\le j\le d}|x_j|$). On
$|x|_\infty\le k$, we have
$$
\int_{|x|_\infty\le k}(g_1-f)dm
\le \int_{|x|_\infty\le k} (f+\theta+\ep-f)dm
\le (2k)^d(2\ep +\ep^{1/(4d)}).
$$
\par
On the set $\{k<|x|_\infty\le N\}$, using the estimate on
$p$, we get
$$
\int_{k<|x|_\infty\le N}(g_1-f)dm\le \int_{|x|_\infty\le N}p\ dm\le
2\ep(2N)^d\le C \ep^{1/2}.
$$
\par
Finally, where $|x|_\infty>N$, we estimate using the bound on $h$:
\begin{align*}
  \int_{N<|x|_\infty} (g_1-f)dm&\le 
\int_{N<|x|_\infty}h(\Vert f\Vert_\infty+\theta+\ep)dm\\
  &\le d\int_{|x_1|\ge N}h(\Vert f\Vert_\infty+\theta+\ep)dm
  \le C \frac{\Vert f\Vert_\infty+\theta(x)+\ep}{Na^{d+1}}.
\end{align*}
\par
As $\ep\rightarrow 0$, the denominator $a^{d+1}N$ tends to $\infty$
since
$$
a^{d+1}N=\ep^{-1/2}\(\frac{\ep^{1/(4d)}}{(d\ep^{1/(4d)}+\Vert
  f\Vert_\infty)k^2}\)^{(d+1)/2},
$$
and therefore $\int{(g_1-f)dm}$ can be made arbitrarily small by
choosing $\ep$ small enough, as claimed.
\par
To conclude, we note that the Fourier transform of $ph$ is, up to a
constant, a convolution of the Fourier transforms of $p$ and $h$.
Since the Fourier transform of $h$ is supported on $[-2a,2a]^d$, the
support of the Fourier transform of $ph$ is therefore contained in the
sum of two compact sets, which is compact.
\par
Similarly,  using the function $f-\theta$, we construct the
approximation function $g_2\leq f$.

\end{document}